\documentclass[a4paper]{amsart}
\usepackage{amsfonts}
\usepackage{amsmath,latexsym,amssymb,amsfonts, amsthm}
\usepackage{esint}
\usepackage{cite}
\usepackage{graphicx}
\usepackage{amscd}
\usepackage{color}
\usepackage{bm}             
\usepackage{enumerate}
\usepackage[dvips]{epsfig}
\usepackage{psfrag}
\usepackage{epsfig}
\usepackage{comment}

\usepackage[
  hmarginratio={1:1},
  vmarginratio={1:1},
  textwidth=16.7cm,
  textheight=22.5cm,
  heightrounded,
]{geometry}

\usepackage{tikz}
\usepackage{graphicx,color}
\usepackage[colorlinks]{hyperref}
\hypersetup{linkcolor=blue,citecolor=blue,filecolor=black,urlcolor=blue}

\newcommand{\beq}{\begin{equation}}
\newcommand{\eeq}{\end{equation}}

\newtheorem{thm}{Theorem}

\newtheorem{proposition}[thm]{Proposition}

\newtheorem{cor}[thm]{Corollary}
\newtheorem{lem}[thm]{Lemma}
\newtheorem{lemma}[thm]{Lemma}

\theoremstyle{definition}

\newtheorem{remark}[thm]{Remark}
\newtheorem{definition}[thm]{Definition}

\numberwithin{thm}{section}

\numberwithin{equation}{section}

\newcommand{\R}{\mathbb{R}}
\newcommand{\cG}{\mathcal{G}}

\newcommand{\x}{\times}
\newcommand{\cE}{\mathcal{E}}

\newcommand{\pa}{\partial}

\keywords{Normalized solutions; ground states;
metric graphs; product spaces; open books; rearrangements; Gagliardo--Nirenberg inequalities.}

\subjclass[2020]{35Q55, 81Q35, 35R02}

\title[The NLS equation on products of $\R^N$ and compact metric graphs]{The nonlinear Schr\"odinger equation on
products of $\R^N$ and compact metric graphs}
\author{Nicola Soave, Gianmaria Verzini and Lorenzo Villata}

\address{Nicola Soave and Lorenzo Villata \newline\indent Dipartimento di Matematica ``G. Peano'' \newline\indent Universit\`a degli Studi di Torino \newline\indent Via Carlo Alberto 10, 10123, Torino, Italy} \email{nicola.soave@unito.it, lorenzo.villata@unito.it}

\address{Gianmaria Verzini \newline\indent Dipartimento di Matematica \newline\indent Politecnico di Milano \newline\indent Piazza Leonardo da Vinci 32, 20133, Milano, Italy} \email{gianmaria.verzini@polito.it}

\begin{document}

\begin{abstract}
We study the stationary focusing nonlinear Schr\"odinger equation on the product $\R^N \x \cG$ of the Euclidean space with a compact metric graph, in the mass-constrained variational setting. Such a product is a hybrid structure of a new type: all its faces are $(N+1)$-dimensional and are glued along interfaces of codimension one, so that the energy space is a genuine Sobolev space, while both the metric and the topology of the graph enter the variational problem. We first develop the functional framework, giving two equivalent descriptions of $H^1(\R^N \x \cG)$, introducing partial rearrangements in each of the two variables together with the corresponding P\'olya--Szeg\H{o} inequalities, and proving Gagliardo--Nirenberg inequalities in a localized form, with a comparison of the optimal constants with those of $\R^{N+1}$ and of the half-space. We then study the mass-constrained problem. Ground states exist for every mass when
\(2<p<2_*:=2+4/(N+1)\). At the critical exponent \(p=2_*\),
they exist below a graph-dependent threshold lying between one half
of the Euclidean critical mass and the full Euclidean critical mass.
The latter value is attained when the graph admits a cycle covering,
whereas the threshold is exactly halved in the presence of a terminal
edge. In both cases, the threshold is sharp. For $2_*<p<2+4/N$ global minimizers do not exist, but we prove the existence of local minimizers below a further mass threshold, for which we give an explicit lower bound. Finally, we describe the dimensional crossover: below a critical mass the minimizers do not depend on the graph variable, and we characterize the threshold below which the semi-trivial solution is a local minimizer in terms of the first nonzero eigenvalue of the Kirchhoff Laplacian on $\cG$.
\end{abstract}

\maketitle

\section{Introduction}

Let $\cG$ be a compact, connected metric graph, and let $N \geq 1$. In this paper we study the existence and the qualitative properties of standing waves for the focusing nonlinear Schr\"odinger (NLS) equation on the higher dimensional graph-like structure given by the product $\R^N \x \cG$:
\begin{equation}
	\label{intro NLS}
	i\partial_t \Psi + \Delta_{x,y}\Psi + |\Psi|^{p-2}\Psi=0, \qquad (x,y) \in \R^N \x \cG,
\end{equation}
where $x$ denotes the variable in $\R^N$ and $y$ the variable in $\cG$, and $\Delta_{x,y}=\Delta_x+\partial_y^2$ is understood edgewise, with continuity and Kirchhoff conditions on the interfaces $\R^N \x \{v\}$, $v$ a vertex of $\cG$. Looking for solutions of the form $\Psi(t,x,y)=e^{i\omega t}u(x,y)$ leads to the stationary problem
\begin{equation}
	\label{intro stationary}
	-\Delta_x u-\partial_y^2u+\omega u=|u|^{p-2}u \qquad \text{on } \R^N \x \cG,
\end{equation}
and it is well known that a natural way to produce solutions of \eqref{intro stationary}, with the frequency $\omega$ arising as a Lagrange multiplier, consists in minimizing the NLS energy
\begin{equation}\label{NLS energy}
E_p\left(u, \R^N \x \cG\right)=\frac{1}{2}\left\|\nabla_{x,y}u\right\|_{L^2(\R^N \x \cG)}^2-\frac{1}{p}\|u\|_{L^p(\R^N \x \cG)}^p
\end{equation}
under the mass constraint $\|u\|^2_{L^2(\R^N \x \cG)}=\mu$. The mass is a conserved quantity of \eqref{intro NLS}, and prescribing it is both physically meaningful and, from the point of view of the dynamics, the natural way to obtain orbitally stable solutions. Following a well-established terminology, we call \emph{ground state of mass $\mu$} any minimizer of $E_p(\cdot,\R^N \x \cG)$ on the constraint, and we denote by $\cE_p(\mu, \R^N \x \cG)$ the corresponding infimum.

The interest of the domain $\R^N \x \cG$ is twofold. On the one hand, products of this kind are natural models for \emph{waveguides with a branched cross section}, or, dually, for branched structures which are thick in $N$ directions and thin in the remaining one: the 
\emph{total lenght} $l(\cG)$ of the graph measures the size of the transversal structure, and the topology of $\cG$ encodes how its branches are connected. On the other hand, and this is the point of view we emphasize here, $\R^N \x \cG$ is a \emph{hybrid domain of a new type}. It is not a manifold, because of the vertices of $\cG$ of degree different from two; it is not a metric graph, because each edge contributes a full $(N+1)$-dimensional face; and it is not a hybrid in the sense of mixed dimensionality, because all its faces $\R^N \x e$ have the same dimension. What makes it a genuinely new object is the way the pieces are glued: finitely many copies of $\R^N \x [0,\ell_e]$ are identified along the $N$-dimensional interfaces $\R^N \x \{v\}$, so that the singular set is of codimension one inside each face. As we shall see, this is exactly what allows us to keep working within a Sobolev-space framework, while at the same time producing a nontrivial interplay between the topology of $\cG$, its total length, and the variational structure of the problem.

\subsection*{Relations with the existing literature}

The problem we consider lies at the crossroads of three lines of research.

\subsubsection*{NLS on metric graphs.} The study of \eqref{intro NLS} on a metric graph, corresponding to the case in which the factor $\R^N$ is absent (i.e. it collapses to a zero dimensional vector space), is by now a well-developed subject, motivated by the analysis of the propagation of waves in branched structures and networks \cite{Meh, Noja, branched, SMSS, BoCa}, and systematically initiated, in the mass-constrained variational setting on non-compact graphs, by Adami, Serra and Tilli \cite{2015, threshold, critical}. Their results revealed a phenomenon which has no counterpart in $\R^n$: whether ground states exist or not depends not only on the mass $\mu$ and on the power $p$, but also on the topology of the graph, the crucial notion being the possibility of covering $\cG$ by cycles, see \cite{threshold}. Since then, the literature has grown in several directions: compact and noncompact graphs, also with localized nonlinearities \cite{compact, CacDovSer, localized_subcritical, localized_critical}, periodic graphs and lattices \cite{square_grid, periodic, DoTe19, Pank, PeSc}, combined nonlinearities \cite{combined, LiZhLi, SoaVil}, the $L^2$-supercritical regime \cite{supercritical, compact_supercritical, localized_supercritical}, potentials \cite{attractive_potential}; we refer to \cite{intro_graphs, kuchment} for the linear theory. Two features of this literature will play a role here: the rearrangement techniques on graphs introduced in \cite{2015} and refined in \cite{threshold}, which we shall extend to the product setting, and the analysis of the stability of the constant solution on a compact graph carried out in \cite{CacDovSer}, of which our last section is the analogue.

\subsubsection*{NLS on product spaces.} The case in which the compact factor is a manifold, that is $\R^N \x M^k$ with $M^k$ a $k$-dimensional compact Riemannian manifold, was studied by Terracini, Tzvetkov and Visciglia in \cite{TerTzvVis}. Beside the existence of ground states in the range in which the problem is $L^2$-subcritical with respect to the dimension $N+k$, they discovered a \emph{dimensional crossover} driven by the mass: for small masses the ground state does not depend on the variable of $M^k$, and the problem behaves as the NLS equation on $\R^N$, whereas for large masses the ground state is genuinely $M^k$-dependent and the full $(N+k)$-dimensional nature of the domain is seen. The threshold between the two regimes is a critical mass, whose existence is proved in \cite[Theorem 1.3]{TerTzvVis}. This phenomenon is precisely the one we reproduce, in a sharper form, on $\R^N \x \cG$. Local minimizers on product spaces, in the regime which is 
$L^2$-supercritical for $\R^N \x M^k$, but both $L^2$-subcritical for $\R^N$ and Sobolev subcritical for $\R^N \x M^k$, have been recently investigated by Pierotti, Verzini and Yu \cite{PieVerYu}; a related mechanism, in which the confinement acts only in some directions, appears in \cite{BelBouJeaVis, MolRieVer}. A dimensional crossover of a different nature, produced by the metric rather than by the mass, is the one of periodic graphs \cite{square_grid, honeycomb, spatial_grid, SoaVil}, where a continuum of critical exponents interpolates between the one- and the two-dimensional ones. 

\subsubsection*{NLS on hybrids.} Domains of \emph{mixed dimensionality} were introduced, in the linear setting, by Exner and \v{S}eba \cite{ExnerSeba}, who classified the self-adjoint realizations of the Laplacian on the \emph{hybrid plane}, a half-line attached to a plane at a single point. The nonlinear problem on such a structure has been addressed very recently by Adami, Boni, Carlone and Tentarelli \cite{hybrid, hybrid_plane}, who proved the first existence and nonexistence results for NLS ground states on a mixed-dimensional manifold. It is worth stressing how different that situation is from ours, and why we nevertheless regard both as hybrid problems. In \cite{hybrid_plane} the two components have different dimensions and meet at a point: a state is a \emph{pair} $(u,v) \in H^1(\R^+) \x L^2(\R^2)$, the coupling between the components is realized through contact interactions and is measured by three parameters, and the very definition of the energy domain requires the singular decomposition of the planar component dictated by the theory of self-adjoint extensions, since a two-dimensional Dirac interaction is not controlled by the gradient term. Consequently, existence is governed by the interplay of the coupling constants, and the compactness threshold is the energy of the one-dimensional soliton. In our setting, instead, the gluing takes place along interfaces of codimension one, the state is a single function, the coupling is the Kirchhoff condition, and the energy space is a genuine Sobolev space, as we show in Section \ref{sez spazio}. What survives of the hybrid nature is the fact that the domain is not a manifold and that its dimensional features are not uniform in scale: at scales much larger than $l(\cG)$ the space looks $N$-dimensional, at scales much smaller than $l(\cG)$ it looks $(N+1)$-dimensional, and the two regimes are selected by the mass.

\subsubsection*{NLS on open books.} A closely related, and independently motivated, notion of higher dimensional quantum graph has been considered very recently under the name of \emph{open books}: finitely many $n$-dimensional ``pages'', glued along $(n-1)$-dimensional ``bindings''. Akduman and Kuchment \cite{openbook} set up the linear theory, equipping such varieties with the Laplace-Beltrami operator on the pages and with junction conditions at the bindings, and characterizing which of these conditions produce a self-adjoint operator, in complete analogy with the vertex conditions of quantum graphs. Although their analysis restricts to pages given by compact manifolds, this construction can be adapted to our setting without modifications. In particular, $\R^N \x \cG$ is an open book whose pages are the slabs $\R^N \x e$, $e$ an edge of $\cG$, and whose bindings are the copies $\R^N \x \{v\}$ of $\R^N$ attached to the vertices, the Kirchhoff condition being one of the admissible self-adjoint junction conditions.

Nonlinear problems on such structures have been investigated in a series of recent papers by Le Coz and Shakarov, whose results are closely related to ours. In \cite{fracturedstrip} they consider a strip $\R \x [0,L]$ with Neumann conditions and a $\delta$ interaction supported on the transversal segment $\{0\} \x [0,L]$, and they prove, among other results, that in the case of attractive delta interaction, for every $\tilde{m}>0$ fixed, the energy minimizer of mass $m = \tilde m L$ does not depend on the transversal variable when $L$ is small, and does depend on it when $L$ is large. This is the framework closest to the present paper, since the minimization is performed under a mass constraint; the case without the delta interaction, which is excluded there, 
corresponds to our Theorem \ref{thm 1.3} with $N=1$ and $\cG$ a segment. Let us point out that \cite{fracturedstrip} produces two distinct thresholds, one for each of the two regimes, which are not shown to coincide, whereas Theorem \ref{thm 1.3} provides a single critical mass. In \cite{LeCozShakarov} the same authors turn to two dimensional open books, whose pages are isometric to rectangles and whose bindings are intervals; they develop the corresponding functional setting and prove the existence of \emph{action} ground states, that is of minimizers of the action on the Nehari manifold. For \emph{graph-based} books $\cG \x [0,L]$ they exhibit a sharp threshold $L_{min}$ separating transversally trivial from genuinely two dimensional ground states, thereby justifying metric graphs as effective one dimensional models of thin two dimensional networks. Their transition and ours are two facets of the same phenomenon, but in this last setting the two products are transposed: there the graph is the \emph{extended} factor and the transversal direction is a shrinking interval, so that the limiting object of the reduction is the metric graph itself, whereas here the graph is the \emph{compact} factor and the extended directions are those of $\R^N$, so that the limiting object is the NLS equation on $\R^N$, and of the graph only the total length survives. Besides working with energy rather than action ground states, and in arbitrary dimension $N \geq 1$, we face a difficulty which is specific to our setting: the arguments of \cite{fracturedstrip, LeCozShakarov} ultimately rest on the fact that, on an interval with Neumann conditions, $\partial_y u$ satisfies Dirichlet conditions and is therefore controlled by $\partial_{yy} u$ through a Poincaré inequality, whereas on a graph the transversal derivative is not continuous at the vertices and satisfies only the Kirchhoff balance. This is precisely why we first settle the case of a segment and then transfer the result to a general compact graph by means of the $y$-decreasing rearrangement.

\subsection*{The functional setting, and the tools}

A substantial part of this paper is devoted to building the
analytical framework for variational problems on $\R^N \x \cG$, which we believe
to be of independent interest for other nonlinear problems on such domains. In
Section \ref{sez spazio} we introduce the Sobolev space $H^1(\R^N \x \cG)$ in two
natural ways: edgewise, requiring $u_e \in H^1(\R^N \x e)$ for every edge $e$
together with the compatibility of the traces at the vertices, and fiberwise,
requiring $u(\cdot,y) \in H^1(\R^N)$ and $u(x,\cdot) \in H^1(\cG)$ for a.e. $y$
and a.e. $x$; then we prove that they coincide, in Proposition \ref{prop: eq def}:
the first definition makes the vertex conditions transparent, the second one makes it possible
to argue one variable at a time, and both are used constantly in the sequel. Since we will be interested in ground states and local minimizers, we restrict to real valued functions; it is clear that our definitions and results extend to the complex valued case with minor changes. Section \ref{sez riarrangiamenti} introduces three \emph{partial rearrangements}:
the $y$-decreasing and the $y$-symmetric ones, acting on the graph variable and
built upon the rearrangements on metric graphs of \cite{2015}, and the
$x$-symmetric one, which is the Schwarz rearrangement in $x$. In the spirit of
Steiner symmetrization we show that all of them are compatible with
$H^1(\R^N \x \cG)$ (the delicate point being the compatibility of the traces), preserve every $L^p$ norm, and decrease \emph{both} components of the
Dirichlet energy (Propositions \ref{propr y-riarrangiamento decr},
\ref{propr y-riarrangiamento simm} and \ref{propr x-riarrangiamento}); the
Polya-Szego inequalities in the ``transversal" variable rely on a characterization
of $H^1$ in terms of the heat semigroup, in the spirit of
\cite[Theorem 7.10]{Lieb_Loss}, whose analogue on the circle we prove in Lemma
\ref{lemma heat kernel su S1}. As a consequence, minimizing sequences for the NLS energy can always be assumed to be radially decreasing in $x$. Moreover, when $\cG$ can be covered by cycles, they can be compared with their symmetric rearrangement in $y$: this is what lets the topology of $\cG$ enter the picture. Finally, in Section \ref{sez background} we establish the Gagliardo-Nirenberg inequalities on $\R^N \x \cG$, the two relevant critical
exponents being $2_*:=2+4/(N+1)$, the $L^2$-critical exponent of $\R^{N+1}$, and
$2+4/N$, that of $\R^N$ (which is always smaller than the $N+1$ Sobolev critical exponent, 
$2^*:=2+4/(N-1)$). We prove the inequality in a localized form
(Proposition \ref{prop GN localizzata}). This gives a
Gagliardo-Nirenberg counterpart of Lions' vanishing lemma, and it is what rules
out the vanishing of minimizing sequences. We then rewrite the inequality with an equivalent
norm, so that the optimal constant $K_{\R^N \x \cG,p,B}$ (see \eqref{GN con costanti} for the precise definition) becomes invariant under
dilations of $\cG$, and we compare it with those of $\R^{N+1}$, of the half-space
$\R^{N+1}_+$, and of the model products $\R^N \x I$ and $\R^N \x S_1$, $I$ a
segment and $S_1$ a circle (Proposition \ref{stime masse critiche}): a terminal
edge, or a covering by cycles, decides which model constant bounds
$K_{\R^N \x \cG,p,B}$ from above and from below, the exact analogue in our
setting of the topological dichotomy of \cite{threshold}. At the \(L^2\)-critical exponent, we also determine the asymptotic
behavior of the optimal constants on the model products
\(\mathbb R^N\times S_1\) and \(\mathbb R^N\times I\) as
\(B\to+\infty\) (Proposition \ref{p:asymptotic-GN-constants} and Corollary \ref{cor: GN opt crit}). Combined with the previous comparisons, this
identifies the limiting optimal constant on graphs admitting a cycle
covering and on graphs containing a terminal edge.

\subsection*{Existence and properties of ground and bound states}

With these tools at hand, we carry out a rather complete study of ground states and local minimizers when $\cG$ is compact. We describe the results following the three regimes of the exponent $p$. We search for local or global minimizers of $E_p(\cdot, \R^N \x \cG)$ defined in \eqref{NLS energy} in 
\begin{equation*}
    H^1_{\mu}(\R^N \x \cG):=\{u \in  H^1(\R^N \x \cG) \text{ such that } \|u\|_{L^2(\R^N \x \cG)}^2=\mu\};
\end{equation*}
in particular, we investigate the existence of a function $u$ that realizes
\begin{equation*}
    \mathcal{E}_{p}(\mu, \R^N \x \cG):=\inf\limits_{u \in H^1_\mu(\R^N \x \cG)}E_p(u, \R^N \x \cG).
\end{equation*}

\subsubsection*{The subcritical and critical regimes.} The main existence results in this ranges can be summarized as follows.

\begin{thm}\label{caso sottocritico e critico}
 Let $\cG$ be a compact metric graph, $\mu>0$, $N\geq 1$, and $p \in \left(2,2+4/(N+1)\right)$. Then, there exists a ground state of mass $\mu$ for $E_p(\cdot, \R^N \x \cG)$. \\
For the critical case $p=2+4/(N+1)=2_*$, there exist positive thresholds 
\[
\mu_{\R^{N+1}_+,2_*} \le \mu_{\mathrm{ex}}^*
\bigl(\mathbb R^N\times \cG,2_*\bigr) \le \mu_{\R^{N+1},2_*},
\]
defined in terms of optimal Gagliardo-Nirenberg constants (see Remark \ref{g.s. in RN+1} and Eq. \eqref{def thr criti 926} below) such that the following holds:
\begin{itemize}
\item[($i$)] If $\cG$ has a terminal edge, then
\[
\begin{cases}
\mathcal{E}_{2_*}\left(\mu, \R^N \x \cG\right) > -\infty \qquad \text{and is attained if }\mu \in (0,\mu_{\R^{N+1}_+,2_*})\\
\mathcal{E}_{2_*}\left(\mu, \R^N \x \cG\right) = -\infty \qquad \text{if }\mu >\mu_{\R^{N+1}_+,2_*}.
\end{cases}
\]
\item[($ii$)] If $\cG$ has no terminal edge, then
\[
\begin{cases}
\mathcal{E}_{2_*}\left(\mu, \R^N \x \cG\right) > -\infty \qquad \text{and is attained if }\mu \in (0,\mu_{\mathrm{ex}}^*(\mathbb R^N\times \cG,2_*))\\
\mathcal{E}_{2_*}\left(\mu, \R^N \x \cG\right) = -\infty \qquad \text{if }\mu >\mu_{\R^{N+1},2_*}.
\end{cases}
\]
Furthermore, if $\cG$ can be covered by cycles, then $\mu_{\mathrm{ex}}^*(\mathbb R^N\times \cG,2_*)= \mu_{\R^{N+1},2_*}$.
\end{itemize}
\end{thm}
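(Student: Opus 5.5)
The plan is the standard concentration-compactness scheme, adapted to $\R^N\x\cG$ through the tools of Sections \ref{sez spazio}--\ref{sez background}. I will take the paper's definitions of the thresholds for granted: $\mu_{\mathrm{ex}}^*(\R^N\x\cG,2_*)$ is the mass below which the localized Gagliardo--Nirenberg inequality (Proposition \ref{prop GN localizzata}, with optimal constant $K_{\R^N\x\cG,p,B}$) yields coercivity, and $\mu_{\R^{N+1},2_*}$, $\mu_{\R^{N+1}_+,2_*}$ are the Euclidean and half-space critical masses. The ordering $\mu_{\R^{N+1}_+,2_*}\le \mu_{\mathrm{ex}}^*\le \mu_{\R^{N+1},2_*}$ then follows from the constant comparisons in Proposition \ref{stime masse critiche}, together with the asymptotics in Proposition \ref{p:asymptotic-GN-constants} and Corollary \ref{cor: GN opt crit}.

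\textbf{Lower bound and existence.} For $p<2_*$, and for $p=2_*$ with $\mu$ below the relevant threshold, the Gagliardo--Nirenberg inequality on $\R^N\x\cG$ in the form $\|u\|_p^p\le C\mu^{\alpha}\|\nabla u\|_2^{\theta p}+C'\mu^{p/2}$, with $\theta p<2$ or with an admissible constant at $2_*$, gives $\mathcal{E}_p(\mu)>-\infty$ and bounded minimizing sequences. By Proposition \ref{propr x-riarrangiamento} I replace each $u_n$ by its $x$-Schwarz rearrangement, so the sequence is radially decreasing in $x$; this removes the translation invariance in $x$.

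\textbf{Strict negativity and vanishing.} Next I show $\mathcal{E}_p(\mu)<0$, and then rule out vanishing. For the first claim I use $y$-independent trial functions $u(x,y)=l(\cG)^{-1/2}\varphi(x)$ with $\varphi$ spread out: since $p<2+4/N$, the energy is negative, because the $N$-dimensional problem is subcritical. For vanishing, the localized Gagliardo--Nirenberg inequality implies that if $\sup_z\int_{B_1(z)\x\cG}u_n^2\to0$ then $\|u_n\|_p\to0$, which forces $\liminf E_p\ge0$, a contradiction.

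\textbf{Compactness.} Radial monotonicity in $x$ places the mass near $x=0$. I extract a weak limit $u\ne0$, and Brezis--Lieb splitting plus the strict subadditivity $\mathcal{E}_p(\mu)<\mathcal{E}_p(m)+\mathcal{E}_p(\mu-m)$ gives $\|u\|_2^2=\mu$. Strict subadditivity follows from the scaling $\mathcal{E}_p(t\mu)<t\mathcal{E}_p(\mu)$ for $t>1$, obtained from $u\mapsto\sqrt t\,u$ when $\mathcal{E}<0$.

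\textbf{Critical case: what changes.} At $p=2_*$ the small-scale behaviour is what matters. The Gagliardo--Nirenberg constant relevant for coercivity is the limit $B\to\infty$ of $K_{\R^N\x\cG,2_*,B}$, which behaves like the Euclidean constant in the interior of edges and of cycles, and like the half-space constant at terminal vertices. This is exactly the content of Corollary \ref{cor: GN opt crit}. Below $\mu_{\mathrm{ex}}^*$ the above argument goes through, with the gradient bounded via the strict inequality; by definition $\mu_{\mathrm{ex}}^*=\mu_{\R^{N+1}_+,2_*}$ in case ($i$). For nonexistence I concentrate rescaled Euclidean optimizers of the $(N+1)$-dimensional critical problem. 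In case ($i$) I place half-space optimizers $\lambda^{(N+1)/2}Q(\lambda x,\lambda y)$ at the terminal vertex, cut off in $y$ to a neighbourhood of the pendant vertex inside the terminal edge; these have Neumann data, matching the edge condition. In case ($ii$) I place full-space optimizers inside an edge. If $\mu$ exceeds the corresponding critical mass, the energy scales like $\lambda^2\cdot(\text{negative})+o(\lambda^2)\to-\infty$. For a cycle covering, the $y$-symmetric rearrangement (Proposition \ref{propr y-riarrangiamento simm}) shows that $K_{\R^N\x\cG}\le K_{\R^N\x S_1}$, whose limit is the $\R^{N+1}$ constant, so $\mu_{\mathrm{ex}}^*=\mu_{\R^{N+1},2_*}$.

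\textbf{Main obstacle.} I expect the hardest step to be compactness at the critical exponent near the threshold. A minimizing sequence might concentrate, with $\|\nabla u_n\|\to\infty$, instead of converging. To exclude this I would argue as follows: a concentrating sequence has asymptotic energy $\ge0$, by the sharp limiting Gagliardo--Nirenberg constant (Corollary \ref{cor: GN opt crit}) and the fact that $\mu<\mu_{\mathrm{ex}}^*$. This contradicts $\mathcal{E}_{2_*}(\mu)<0$, and therefore the gradients stay bounded.
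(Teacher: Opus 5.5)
Your proposal is correct and follows the paper's proof essentially step by step. The shared steps are $x$-radially decreasing minimizing sequences, negativity of the infimum via $y$-independent competitors, and exclusion of vanishing through the localized Gagliardo--Nirenberg inequality on the cube $Q_0$; then strict subadditivity from the scaling $u\mapsto\sqrt{t}\,u$, Brezis--Lieb splitting, and concentration of (half-)space profiles inside an edge or at a terminal vertex for nonexistence, with the thresholds identified through Proposition~\ref{stime masse critiche}, Proposition~\ref{p:asymptotic-GN-constants} and Corollary~\ref{cor: GN opt crit}. Your ``main obstacle'' is not really one: as in the paper, fixing $B$ large so that $\frac{2}{2_*}K_{\R^N\x\cG,2_*,B}\,\mu^{2/(N+1)}<1$ directly gives a coercive lower bound on $E_{2_*}$ in terms of $\|\nabla_{x,y}u\|_2^2$, so minimizing sequences are bounded in $H^1$ without a separate concentration analysis.
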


From now on, we often say that 
\begin{equation}\label{(H)}
\text{$\cG$ satisfies condition (H) if $\cG$ can be covered by cycles},
\end{equation}
where a cycle in $\cG$ is a closed or infinite path with almost no self-intersections. This assumption plays a major role in the analysis of the NLS equation on metric graphs, we refer for instance to \cite{2015}. Actually, in \cite{2015} this condition is stated differently; this equivalent formulation can be found in \cite{threshold}.

We point out that, assuming that $\cG$ has a terminal edge or admits a covering by cycles, the threshold $\mu_{\mathrm{ex}}^*(\mathbb R^N\times \cG,2_*)$ is sharp. Nonetheless, also in these cases the existence of ground states in the limit case $\mu=\mu_{ex}^*\bigl(\mathbb R^N\times \cG,2_*\bigr)$ remains open. We also note that there exist compact metric graphs with no terminal edges that do not admit a cycle covering. A simple example is given by two circles connected by a single segment. More generally, this is the case for any compact graph consisting of two compact subgraphs without pendants joined by a single edge. 

The previous theorem establishes that, for $p \in (2, 2_*)$, the energy is bounded from below on every mass sphere, and ground states always exist. At the critical exponent $p=2_*$ the situation changes, and the outcome depends on the mass through the optimal Gagliardo-Nirenberg constant.

The genuinely new phenomenon, in the presence of the graph factor, concerns the dependence of the ground state on $y$. A distinguished family of solutions of \eqref{intro stationary} is given by the \emph{semi-trivial} ones, i.e. those which do not depend on $y$: these are exactly the rescaled solitons $Z_{\mu/l(\cG),p}$ with mass $\mu /l(\cG)$ of the NLS equation on $\R^N$, regarded as functions on $\R^N \x \cG$. We prove that they are the ground states precisely for small masses:
\begin{thm}\label{thm 1.3}
Let $\cG$ be a compact metric graph, $\mu>0$, $N \in  \mathbb{N}\geq 1$, 
$p \in \left(2,2+4/(N+1)\right]$, and let $u_\mu$ be a ground state of 
$\cE_p(\mu, \R^N \times \cG)$ 
(we assume that such a ground state exists, see Theorem 
\ref{caso sottocritico e critico}). Then, there exists a critical mass 
$\mu_2\left(\R^N \x \cG,p\right)>0$ such that	
	\begin{equation}
		\label{tesi thm 1.3}
		\mu < \mu_2\left(\R^N \x \cG,p\right) \implies \partial_y u_\mu \equiv 0, \qquad 
		\mu>\mu_2\left(\R^N \x \cG,p\right) \implies  \partial_y u_\mu \not\equiv 0.
	\end{equation}
\end{thm}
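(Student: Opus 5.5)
We compare the ground state level with the semi-trivial level
\[
\mathcal{E}_p^{\mathrm{st}}(\mu):=E_p\bigl(Z_{\mu/l(\cG),p},\R^N\x\cG\bigr)=l(\cG)\,\cE_p\bigl(\mu/l(\cG),\R^N\bigr),
\]
which scales explicitly in $\mu$, and define
\[
\mu_2:=\sup\{\mu>0:\ \cE_p(\mu,\R^N\x\cG)=\mathcal{E}_p^{\mathrm{st}}(\mu)\ \text{and every ground state of mass }\mu\text{ is $y$-independent}\}.
\]
The proof then has three steps. First we show that the set is nonempty, that is $\mu_2>0$. Next we show a monotonicity property: if $\mu$ belongs to the set, so does every $\mu'<\mu$. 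Finally we show that for $\mu>\mu_2$ no ground state is $y$-independent.

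For the smallness step we first reduce to the segment case. By Proposition \ref{propr y-riarrangiamento decr}, the $y$-decreasing rearrangement maps $u_\mu$ onto a function on $\R^N\x[0,l(\cG)]$ with the same mass, the same $L^p$ norm and no larger gradient terms. Combined with the $x$-Schwarz rearrangement, we may therefore work with $u$ radially decreasing in $x$ and monotone in $y$ on a strip of width $l:=l(\cG)$. On the strip, Neumann conditions let us argue as in \cite{fracturedstrip}. We split $u=\bar u+w$, where $\bar u(x)=l^{-1}\int u\,dy$ and $\int w\,dy=0$. The Poincar\'e inequality in $y$ gives $\|\partial_y w\|_2^2\ge (\pi/l)^2\|w\|_2^2$, since $\pi/l$ is the first Neumann eigenvalue. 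Using the Euler--Lagrange equation, a Taylor expansion of the $L^p$ term, and Gagliardo--Nirenberg (Proposition \ref{prop GN localizzata}), for small mass the nonlinear cross terms are bounded by $o(1)\|\partial_y w\|_2^2$. This holds because $\|u_\mu\|_\infty\to0$ as $\mu\to0$, by the $L^\infty$ bounds coming from the equation with $\omega\sim\mu^{\alpha}$. The positive contribution $\frac12\|\partial_y w\|^2$ then dominates, which forces $w\equiv0$ on the strip and hence $\partial_y u_\mu\equiv0$ on $\cG$. Equality in the P\'olya--Szeg\H{o} step transfers the conclusion back to the original $u_\mu$, since a rearranged constant-in-$y$ function has the same Dirichlet energy only if $u_\mu$ is itself $y$-constant. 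This equality case is where the missing continuity of $\partial_y u$ at the vertices is circumvented.

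For the monotonicity step we use scaling. Take $u$ on $\R^N\x\cG$ with mass $\mu'<\mu$ and set $v(x,y)=\theta^{a}u(\theta^{b}x,y)$, choosing $a,b$ so that $v$ has mass $\mu$. The $x$-scaling multiplies $\|\nabla_x u\|^2$ and $\|u\|_p^p$ by powers that are exactly those of the $\R^N$ soliton scaling, while $\|\partial_y u\|^2$ is multiplied by a factor strictly greater than the one applied to the semi-trivial comparison, since $b>0$ and $p<2+4/N$. Writing $E(v)-\mathcal{E}^{\mathrm{st}}(\mu)\ge c(\theta)\,[E(u)-\mathcal{E}^{\mathrm{st}}(\mu')]+\delta\|\partial_y u\|^2$ with $\delta>0$, the following holds: if some $y$-dependent $u$ of mass $\mu'$ had $E(u)\le \mathcal{E}^{\mathrm{st}}(\mu')$, then $v$ would give a strictly lower energy than the semi-trivial level at mass $\mu$. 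This contradicts $\mu$ being in the set, so the set is an interval $(0,\mu_2)$.

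For the last step, when $\mu>\mu_2$, the interval property shows that there is a mass $\mu'\in(\mu_2,\mu)$ at which either the infimum is strictly below the semi-trivial level or a $y$-dependent ground state exists. The same scaling argument then gives $\cE_p(\mu)<\mathcal{E}_p^{\mathrm{st}}(\mu)$, so no ground state is $y$-independent. We must also check $\mu_2<\infty$, but this is not needed for the dichotomy. It follows anyway from a test function concentrated in a slab near a vertex, in the spirit of the comparisons of Proposition \ref{stime masse critiche}.

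The main obstacle is the small-mass step: making the expansion around $\bar u$ uniform requires quantitative decay of $\|u_\mu\|_\infty$ and of $\omega_\mu$ as $\mu\to0$, together with the equality case in the rearrangement.
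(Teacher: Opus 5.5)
\textbf{Step 2 has the key sign reversed.} With $b=\frac{p-2}{2}a$ and $\rho:=\mu/\mu'>1$, the $x$-part $\frac12\|\nabla_x u\|_2^2-\frac1p\|u\|_p^p$ is multiplied by $\rho^{\gamma}$, where $\gamma:=1+\frac{2(p-2)}{4-N(p-2)}>1$. By \eqref{scalamento energia}, $\rho^\gamma$ is also the ratio $\mathcal{E}^{\mathrm{st}}_p(\mu)/\mathcal{E}^{\mathrm{st}}_p(\mu')$. The term $\|\partial_y u\|_2^2$, however, is multiplied only by $\rho<\rho^{\gamma}$. Hence
\[
E_p(v)-\mathcal{E}^{\mathrm{st}}_p(\mu)=\rho^{\gamma}\bigl[E_p(u)-\mathcal{E}^{\mathrm{st}}_p(\mu')\bigr]-\frac{\rho^{\gamma}-\rho}{2}\,\|\partial_y u\|_2^2,
\]
with a \emph{minus} sign in front of the transversal term. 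Your inequality with $+\delta\|\partial_y u\|_2^2$ is false. Even if it held, it is a lower bound on $E_p(v)$, so it could never give $E_p(v)<\mathcal{E}^{\mathrm{st}}_p(\mu)$. With the correct identity, your interval property and Step 3 follow exactly as you intend. This part is then the paper's argument in unscaled variables: the weight $\lambda_\mu$ of $\|\partial_y u\|_2^2$ decreases in $\mu$, and $E_{p,\lambda}(v)$ is strictly increasing in $\lambda$ when $\partial_y v\not\equiv 0$ (third step of the proof of Proposition \ref{thm 3.1}).

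\textbf{Step 1 takes a genuinely different route from the paper, and a leaner one.} The paper rescales to $E_{p,\lambda_n}$ and imports from \cite{TerTzvVis} the convergence $u_n\to Z_{1/l(\cG),p}$. It then proves $W^{2,2}$ regularity on a segment so as to differentiate the equation in $y$, applies the Dirichlet Poincar\'e inequality to $\partial_y u_n$, and needs the $y$-decreasing rearrangement because $\partial_y u$ has no vertex condition on a graph.

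In your decomposition that obstruction does not exist. Since $w(x,\cdot)$ has zero mean, $\|w\|_2^2\le\alpha_1(\cG)^{-1}\|\partial_y w\|_2^2$ holds on every compact graph (Definition \ref{primo autovettore}). So the strip reduction, the rearrangement and its equality case are superfluous.

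Two points should be made explicit.
\begin{itemize}
\item[(a)] \emph{The comparison.} The kinetic cross term and the linear Taylor term vanish because $\int_{\cG}w\,dy=0$. Moreover $l(\cG)E_p(\bar u,\R^N)\ge\mathcal{E}^{\mathrm{st}}_p(\mu)$, since $l(\cG)\|\bar u\|_2^2=\mu-\|w\|_2^2\le\mu$ and $\nu\mapsto\cE_p(\nu,\R^N)$ is nonincreasing.
\item[(b)] \emph{The smallness.} You assert $\|u_\mu\|_\infty\to0$ and $\omega_\mu\sim\mu^{\alpha}$ without proof, but you need neither. H\"older with exponents $\frac{p}{p-2},\frac p2$, Jensen ($\|\bar u\|_p\le\|u\|_p$) and \eqref{GN} give
\[
E_p(u)\ge\mathcal{E}^{\mathrm{st}}_p(\mu)+\Bigl(\frac12-C\|u\|_p^{p-2}\Bigr)\|\nabla_{x,y}w\|_2^2\qquad\forall u\in H^1_\mu(\R^N\x\cG),
\]
with $C=C(N,p,\cG)$. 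Also $\|u_\mu\|_p\to0$ uniformly over ground states as $\mu\to0$: $E_p(u_\mu)<0$ together with \eqref{GN} (or \eqref{stima E_p} when $p=2_*$) forces $\|u_\mu\|_{H^1}\to0$.
\end{itemize}

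Compared with the paper, this gives $\mu_2>0$ with no regularity theory, no Euler--Lagrange equation and no rearrangement. It also applies directly to the local minimizers of Section \ref{supercritical case}: these lie in $U_{\mu,t^*\mu}$ together with $Z_{\mu/l(\cG),p}$, so their $H^1$ norm is $O(\sqrt{\mu})$. What the paper's route gives in exchange is the convergence of the rescaled ground states to the soliton.
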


This extends \cite[Theorem 1.3]{TerTzvVis} to the graph setting, and it also covers the critical case $p=2_*$, which is not contained in \cite{TerTzvVis}. The proof follows the strategy of \cite{TerTzvVis} only up to a point: after reducing, by a scaling, to a family of functionals $E_{p,\lambda}$ in which the transversal derivative carries a large weight $\lambda$, we have to show that the transversal derivative of the corresponding ground states does not merely vanish in the limit, but vanishes identically for $\lambda$ large. Here the graph structure requires a genuinely different argument: we first prove the statement when $\cG$ is a segment, exploiting the $W^{2,2}$ regularity of the minimizers and the Neumann expansion in the transversal variable (Lemmas \ref{u in W2,2} and \ref{lemma 3.6}), and then we transfer it to a general compact graph by means of the $y$-decreasing rearrangement of Section \ref{sez riarrangiamenti}.

\subsubsection*{The regime $2_*<p<2+4/N$.} In this range the problem is $L^2$-supercritical for $\R^N \x \cG$ but still subcritical for $\R^N$: global minimizers do not exist for any mass, but the semi-trivial solutions do, and it is natural to look for local minimizers, in the spirit of \cite{PieVerYu} (where the case of the product of $\R^N$ with a compact manifold is investigated). In Section \ref{supercritical case} we prove that local minimizers exist below an explicit threshold:

\begin{thm}
	\label{minimo locale intro}
	Let $\cG$ be a compact metric graph, $N \geq 1$, $p \in \left(2+4/(N+1), 2+4/N\right)$. Then, there exists a critical mass $\mu^*_{ex}(\R^N \x \cG, p)>0$ such that, for every $0<\mu<\mu_{ex}^*(\R^N \x \cG, p)$, the functional $E_p(\cdot, \R^N \x \cG)$ admits a local minimum $u_{\mu}$ in $H^1_{\mu}(\R^N \x \cG)$.
\end{thm}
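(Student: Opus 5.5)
We follow the strategy of \cite{PieVerYu}: restrict the energy to a sublevel set of the kinetic energy and show that, for small mass, the infimum on that set is attained strictly inside it. For $\rho>0$ define
\[
B_\rho:=\Bigl\{u\in H^1_\mu(\R^N\x\cG):\ \|\nabla_{x,y}u\|_{L^2}^2\le\rho\Bigr\},\qquad
m_\rho(\mu):=\inf_{B_\rho}E_p(\cdot,\R^N\x\cG).
\]
A minimizer lying in the interior of $B_\rho$ is a local minimizer on $H^1_\mu$. To control the energy we use the Gagliardo--Nirenberg inequality of Section \ref{sez background}, in its nonhomogeneous form on $\R^N\x\cG$:
\[
\|u\|_p^p\le K\,\|u\|_2^{p-\frac{(N+1)(p-2)}{2}}\bigl(\|\nabla u\|_2^2+\|u\|_2^2\bigr)^{\frac{(N+1)(p-2)}{4}}.
\]
Here $(N+1)(p-2)/4>1$ because $p>2_*$.

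\emph{Step 1: energy gap.} Set $f_\mu(t):=\tfrac12 t-\tfrac{K}{p}\mu^{\alpha}(t+\mu)^{\beta}$, with $\beta>1$ and $\alpha>0$. Then $E_p(u)\ge f_\mu(\|\nabla u\|_2^2)$. Choose $\rho=\rho(\mu)$ as follows. For $\mu$ small, $f_\mu$ increases on $[0,\rho]$ and satisfies $f_\mu(\rho)\ge \rho/4$. A convenient choice is $\rho\sim\mu^{-\gamma}$ for a suitable $\gamma>0$, or simply a fixed $\rho$; one checks which choice works from the exponents. On the other hand, test functions with tiny gradient have energy close to $0$ from below. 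Indeed, take the semi-trivial soliton $Z_{\mu/l(\cG),p}$ rescaled in $x$ so that its gradient is as small as we like. Since $p<2+4/N$, this rescaling gives negative energy. Hence
\[
m_\rho(\mu)<0<\rho/4\le\inf_{\|\nabla u\|_2^2\in[\rho/2,\rho]}E_p(u).
\]
Therefore minimizing sequences for $m_\rho$ eventually satisfy $\|\nabla u_n\|_2^2<\rho/2$. Quantifying the admissible range of $\mu$ defines $\mu^*_{ex}$; the explicit lower bound comes from these inequalities.

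\emph{Step 2: compactness.} Take a minimizing sequence in $B_\rho$. By the rearrangements of Section \ref{sez riarrangiamenti}, namely the $x$-Schwarz symmetrization (Proposition \ref{propr x-riarrangiamento}), we may assume each $u_n$ is radially decreasing in $x$. This keeps us in $B_\rho$, preserves the $L^p$ norms and does not increase the energy. The sequence is bounded in $H^1$.

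Vanishing is excluded by the localized Gagliardo--Nirenberg inequality (Proposition \ref{prop GN localizzata}). If $\sup_z\|u_n\|_{L^2(Q_z)}\to0$, where the $Q_z$ are unit cells $(z+[0,1]^N)\x\cG$, then $\|u_n\|_p\to0$. That would give $\liminf E_p(u_n)\ge0>m_\rho$, a contradiction. Because of the radial symmetry, the mass does not escape to infinity, so $u_n\rightharpoonup u\neq0$.

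To show that $\|u\|_2^2=\mu$ we use a Brezis--Lieb splitting together with strict subadditivity of $\mu\mapsto m_{\rho}(\mu)$. Strict subadditivity follows from $m_\rho(\theta\mu)<\theta m_\rho(\mu)$ for $\theta>1$, which is proved by scaling $u\mapsto\sqrt\theta u$. This requires an extra check: the scaled function must remain in the admissible ball. That is guaranteed by the gap from Step 1, since near-minimizers have gradient below $\rho/2$. Alternatively, we could use compact embedding of radial functions in $L^p$ for $N\ge2$, together with the mass-increase argument that works for $N=1$. Once the mass is recovered, the limit $u$ minimizes $m_\rho$ and lies in the interior of $B_\rho$, so it is a local minimizer.

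\emph{Main obstacle.} We expect the hardest part to be the compactness step: recovering the full mass while keeping the splitting compatible with the ball constraint. This is where the localized inequality and the energy gap must be combined carefully. The bookkeeping of exponents needed to pick $\rho(\mu)$ is routine.
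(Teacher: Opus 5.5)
Your outline is essentially the paper's argument (Lemma \ref{esistenza minimo locale Jea} and Theorem \ref{minimo locale}). The shared ingredients are:
\begin{itemize}
\item a sublevel of the Dirichlet energy with a fixed outer radius;
\item the Gagliardo--Nirenberg lower bound, which makes $E_p$ positive on an outer shell for small mass;
\item the $x$-dilated semi-trivial soliton, which gives a negative infimum;
\item $x$-Schwarz symmetrization together with the localized inequality, which yields a nonzero weak limit;
\item the scaling $u\mapsto\sqrt{\theta}\,u$, which gives strict monotonicity of $s\mapsto m_\rho(s)/s$.
\end{itemize}
A fixed $\rho$ does work, because $\tfrac{K}{p}\mu^{\alpha}(t+\mu)^{\beta}\to 0$ uniformly on $[\rho/2,\rho]$. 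The shell bound you actually get is $f_\mu(\rho/2)$, which is positive but slightly below $\rho/4$; this is harmless.

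The step that fails as written is the admissibility of $\sqrt{\theta}\,v_n$. From $\|\nabla v_n\|_2^2<\rho/2$ you only get $\|\nabla(\sqrt{\theta}\,v_n)\|_2^2<\theta\rho/2$, which is $\le\rho$ only when $\theta\le 2$. But strict subadditivity $m_\rho(\mu)<m_\rho(\tau)+m_\rho(\mu-\tau)$, for the unknown mass $\tau$ of the weak limit, needs ratios up to $\max\{\tau,\mu-\tau\}/\min\{\tau,\mu-\tau\}$. This ratio is unbounded as $\tau\to 0$ or $\tau\to\mu$.

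The paper builds the fix into its definitions. Its shell is $U_{\mu,t}\setminus U_{\mu,t\mu}$, with inner radius proportional to the mass and $\mu^*\le 1$. Hence $\sqrt{s/r}$ maps $U_{r,rt}$ into $U_{s,st}\subseteq U_{s,t}$ for every $r<s\le\mu^*$.

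Your own estimate already gives the same structure. Suppose $u\in B_\rho$ has mass $\tau$ and $\|\nabla u\|_2^2=T\ge\tau$. Then
\[
E_p(u)\ge T\bigl(\tfrac12-\tfrac{K}{p}2^{\beta}\tau^{\alpha}\rho^{\beta-1}\bigr),
\]
which is positive for $\tau$ small. So every negative-energy competitor of mass $\tau$ satisfies $\|\nabla u\|_2^2<\tau$. Consequently, rescaling near-minimizers from mass $r$ to mass $s\le\mu\le\rho$ keeps them in $B_\rho$, for any ratio $s/r$.

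Alternatively, you can chain scalings of ratio at most $2$ through intermediate masses. This is legitimate because $f_\tau\ge f_\mu$ for $\tau\le\mu$, so your shell estimate holds at every mass below $\mu$ with the same $\rho$.

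With either fix, your Brezis--Lieb splitting closes the argument. The paper instead feeds the weak limit directly into the monotonicity of $s\mapsto\inf_{U_{s,t}}E_p/s$.
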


A more accurate statement (with a quantitative estimate for $\mu^*_{ex}$) is given in Theorem \ref{minimo locale} below.

Our approach here is close to the one of \cite{BelBouJeaVis}, and slightly different from the one of \cite{PieVerYu}: instead of a truncation argument, we isolate a sublevel of the Dirichlet energy in which the energy is bounded from below, and we optimize the resulting threshold with respect to the size of the sublevel. This has the advantage of producing an explicit lower bound for $\mu^*_{ex}$ in terms of the optimal Gagliardo-Nirenberg constant, of $l(\cG)$ and of $N$, and of showing how the threshold scales under dilations of $\cG$. We then show that these local minimizers exhibit the same behaviour as the ground states of the subcritical regime: they are semi-trivial below a critical mass, and genuinely $y$-dependent above it.

\begin{thm}
	\label{thm 1.3 sopracritico}
	Let $\cG$ be a compact metric graph, $N \geq 1$, $2+4/(N+1) < p < 2+4/N$. Then, there exists a critical mass $0<\mu_2\left(\R^N \x \cG,p\right)\leq \mu^*_{ex}\left(\R^N \x \cG,p\right)$ such that:
	\begin{itemize}
		\item[($i$)] if $0<\mu<\mu_2\left(\R^N \x \cG,p\right)$, every local minimizer $u$ found in Theorem \ref{minimo locale intro} satisfies $\|\pa_y u\|_2^2= 0$;
		\item[($ii$)] if $\mu_2(\R^N \x \cG,p)<\mu<\mu^*_{ex}(\R^N \x \cG,p)$, then the local minimizer of mass $\mu$ found in Theorem \ref{minimo locale intro} satisfies $\|\pa_y u\|_2^2>0$.
	\end{itemize}
\end{thm}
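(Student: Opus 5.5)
The plan is to follow the strategy announced for Theorem \ref{thm 1.3}, now adapted to the local minimizers of Theorem \ref{minimo locale intro}. These minimizers are obtained by minimizing $E_p$ on the set
\[
A_\mu(\rho)=\{u\in H^1_\mu : \|\nabla u\|_2^2\le \rho\},
\]
where the energy is bounded below. As a first step, I would rescale in $x$: set $u(x,y)=\mu^{\alpha}v(\mu^{\beta}x,y)$, with exponents chosen so that $v$ has fixed mass. This turns the problem into a one-parameter family of functionals
\[
E_{p,\lambda}(v)=\frac12\|\nabla_x v\|_2^2+\frac{\lambda}{2}\|\partial_y v\|_2^2-\frac1p\|v\|_p^p,
\]
constrained to a rescaled sublevel, where small $\mu$ corresponds to large $\lambda$. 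Since $p<2+4/N$, the $x$-part stays $L^2$-subcritical in $\R^N$, so the exponents are well defined.

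Next, I would define $\mu_2$ as the supremum of the masses $\nu\le\mu^*_{ex}$ such that, for every $\mu<\nu$, every local minimizer in $A_\mu(\rho)$ is semi-trivial.

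\textbf{Part (i).} Here I need $\mu_2>0$. The first sub-step is to show that, as $\lambda\to\infty$, local minimizers $v_\lambda$ satisfy $\lambda\|\partial_y v_\lambda\|_2^2\to0$ and converge (after translation in $x$) to the soliton $Z$ on $\R^N$ of mass $1/l(\cG)$. The comparison function is the semi-trivial candidate. It lies in the sublevel for small $\mu$, because $\rho$ can be chosen with margin in the proof of Theorem \ref{minimo locale}. Hence the infimum on the sublevel is at most $E(Z)$, which yields the energy bounds; combining them with the localized Gagliardo--Nirenberg inequality (Proposition \ref{prop GN localizzata}) to exclude vanishing gives the convergence.

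The second sub-step is to upgrade this to $\partial_y v_\lambda\equiv0$ for large $\lambda$. I would reuse the argument of Lemmas \ref{u in W2,2} and \ref{lemma 3.6}. First, by Proposition \ref{propr y-riarrangiamento decr}, replace $v_\lambda$ by its $y$-decreasing rearrangement, transported onto a segment of length $l(\cG)$. This does not increase either Dirichlet term and preserves norms, so it remains in the sublevel and is still a minimizer. On the segment, expand in the Neumann basis: $v=v_0(x)+w$, with $w$ orthogonal to constants in $y$. The Euler--Lagrange equation, together with $W^{2,2}$ regularity, gives
\[
\lambda\pi^2l^{-2}\|w\|^2\lesssim \bigl\|(|v|^{p-2}v)_{\perp}\bigr\|\,\|w\|\lesssim C\|w\|^2 .
\]
The last bound uses Taylor expansion around $v_0$ and uniform $L^\infty$ bounds from regularity. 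For $\lambda$ large this forces $w=0$.

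\textbf{Part (ii).} Here, for $\mu_2<\mu<\mu^*_{ex}$, I must show that the minimizer cannot be semi-trivial. I would argue via monotonicity, as in \cite{TerTzvVis}. Suppose the local minimizer of mass $\mu$ were semi-trivial. Then its energy equals $e(\mu):=E_p(Z_{\mu/l})$, and I would show that this forces every minimizer of smaller mass to be semi-trivial, contradicting the definition of $\mu_2$. The mechanism is a scaling $u\mapsto \theta^{a}u(\theta^{b}x,y)$ that decreases the mass while acting trivially on $\partial_y$. Comparing the ratio $\inf_{A_\mu}E/e(\mu)$ with the corresponding ratio at smaller masses should show that strict inequality $\inf<e$ propagates upwards in mass. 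One must check that this scaling preserves the sublevel constraint, which should follow from the optimization of $\rho$ in Theorem \ref{minimo locale}. The inequality $\mu_2\le\mu^*_{ex}$ then holds by definition.

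I expect the main obstacle to be this monotonicity step in (ii). The constraint $\|\nabla u\|^2\le\rho(\mu)$ is not invariant under the scalings, and it must be verified that minimizers stay strictly inside the sublevel, so that they are genuine critical points and the comparison is legitimate. The rearrangement step in (i) must also keep the sublevel constraint; this works because Proposition \ref{propr y-riarrangiamento decr} decreases the full gradient.
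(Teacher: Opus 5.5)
Your plan follows the paper's route: rescale to $E_{p,\lambda}$, show that minimizers converge to $Z:=Z_{1/l(\cG),p}$ with $\lambda\|\pa_y v_\lambda\|_2^2\to0$, kill $\pa_y v_\lambda$ for large $\lambda$, and get (ii) by monotonicity in the mass as in \cite{TerTzvVis,PieVerYu}. Two steps, however, fail as written.

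\textbf{Part (i).} The claim that $v_\lambda^{*y}$ ``is still a minimizer'' on the segment is unjustified. Proposition \ref{propr y-riarrangiamento decr} only gives $\inf_{\mathrm{seg}}\le E_{p,\lambda}(v_\lambda^{*y})\le\inf_{\cG}$. Without the reverse inequality you have no Euler--Lagrange equation for $v_\lambda^{*y}$. The paper closes this by first proving the segment statement for the segment's own minimizers. That gives $\inf_{\mathrm{seg}}=l(\cG)\cE_p(1/l(\cG),\R^N)\ge\inf_{\cG}$, which forces equality in the chain (third step of Lemma \ref{lemma 3.6}).

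In fact your mode estimate makes the rearrangement unnecessary. On $\cG$ itself write $v=v_0+w$, with $v_0$ the $y$-average. Testing the weak equation with $w$ and using \eqref{1571} gives $\|\nabla_xw\|_2^2+\lambda\|\pa_yw\|_2^2+\omega\|w\|_2^2=\int(|v|^{p-2}v-|v_0|^{p-2}v_0)w$. Fiberwise, $\|\pa_yw\|_2^2\ge\alpha_1(\cG)\|w\|_2^2$. This needs neither $W^{2,2}$ regularity nor any vertex condition on $\pa_yv$, which is exactly the obstruction the paper avoids by passing through the segment.

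Separately, an $L^\infty$ bound uniform in $\lambda$ is not a direct output of standard regularity. It is simpler to bound the right-hand side as the paper does: split $|v|^{p-2}=Z^{p-2}+(|v|^{p-2}-Z^{p-2})$ and absorb the second piece as $o(1)\|w\|_{H^1}^2$, using strong $H^1$ convergence.

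\textbf{Part (ii).} The comparison must go up in mass. Let $u_\theta(x,y):=\theta^{\frac{2}{4-N(p-2)}}u(\theta^{\frac{p-2}{4-N(p-2)}}x,y)$ with $\theta>1$, and $\gamma:=1+\frac{2(p-2)}{4-N(p-2)}>1$. This scaling multiplies the mass and $\|\pa_yu\|_2^2$ by $\theta$, but $\|\nabla_xu\|_2^2$ and $\|u\|_p^p$ by $\theta^\gamma$. Hence $E_p(u_\theta)\le\theta^\gamma E_p(u)$, strictly if $\pa_yu\not\equiv0$. Meanwhile $l(\cG)\cE_p(\theta\mu_0/l(\cG),\R^N)=\theta^\gamma\, l(\cG)\cE_p(\mu_0/l(\cG),\R^N)$ by \eqref{scalamento energia}.

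The constraint you leave open is not settled by optimizing the sublevel. What settles it is the two-level structure of Lemma \ref{esistenza minimo locale Jea}. The minimizer of mass $\mu_0$ lies in the inner set $U_{\mu_0,t^*\mu_0}$, so $\|\nabla_{x,y}u_\theta\|_2^2<\theta^{\gamma-1}t^*\mu$ with $\mu=\theta\mu_0$. The minimizer of mass $\mu$ is global only in the outer set $U_{\mu,t^*\mu^*_{ex}}$ (undo the dilation in the proof of Theorem \ref{minimo locale}). So $u_\theta$ is an admissible competitor only when $\theta^{\gamma-1}\mu\le\mu^*_{ex}$.

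To reach an arbitrary $\mu\in(\mu_0,\mu^*_{ex})$ you must therefore propagate the strict inequality in finitely many steps of ratio $(\mu^*_{ex}/\mu)^{1/(\gamma-1)}$. At each step you use that the new minimizer again lies in the inner set. This is what the paper imports from \cite[Lemma 4.1]{PieVerYu}, and without it part (ii) is not proved. Once propagation holds, your definition of $\mu_2$ as a supremum coincides with the paper's, which is the infimum of the masses where $\inf<l(\cG)\cE_p(\mu/l(\cG),\R^N)$.
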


\subsubsection*{Local minimality of the semi-trivial solution.} The critical mass $\mu_2$ of Theorems \ref{thm 1.3} and \ref{thm 1.3 sopracritico} is defined by a comparison of energy levels, and is therefore not easy to compute. In Section \ref{sez minimalita} we characterize a second, and more explicit, critical mass, namely the largest mass below which the semi-trivial solution $Z_{\mu/l(\cG),p}$ is a local minimizer. Denoting by $\alpha_1(\cG)$ the first nonzero eigenvalue of the Kirchhoff Laplacian on $\cG$ (see Definition \ref{primo autovettore} below) and $\beta_1(p, N, 1)$ the first eigenvalue of $E_p''(Z_{1,p}, \R^N)$ (see Definition \ref{primo autovalore E''}), we prove:

\begin{thm}\label{thm: loc min semitriv}
		Let $\cG$ be a compact metric graph, $N \geq 1$, $p \in \left(2, 2+4/N\right)$, and let $Z_{\mu/l(\cG), p}$ be the semi-trivial solution. Then, the critical mass
		\[
		\mu_1(\R^N \x \cG,p):=l(\cG)\left(\frac{\alpha_1(\cG)}{E_p'\left(Z_{1, p}, \R^N\right)\left(Z_{1, p}\right)-\beta_1(p, N, 1)}\right)^{\frac{4-N(p-2)}{2(p-2)}}
		\]
		is such that
	\begin{itemize}
		\item[($i$)] if $\mu<\mu_1(\R^N \x \cG,p)$, then $Z_{\mu/l(\cG), p}$ is a local minimizer of $E_p$ in $H^1_{\mu}(\R^N \x \cG)$;
		\item[($ii$)] if $\mu>\mu_1(\R^N \x \cG,p)$, then $Z_{\mu/l(\cG), p}$ is not a local minimizer of $E_p$ in $H^1_{\mu}(\R^N \x \cG)$.
	\end{itemize}
\end{thm}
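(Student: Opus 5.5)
We study the second variation of $E_p$ at the semi-trivial solution on the tangent space of the constraint, splitting perturbations into their graph-averages and their zero-mean parts. Write $\lambda$ for the Lagrange multiplier. With $Z:=Z_{\mu/l(\cG),p}$ we have $-\Delta_x Z+\lambda Z=Z^{p-1}$ on $\R^N$. Recall that $Z_{m,p}$ is obtained from $Z_{1,p}$ by the $L^2$-invariant scaling $Z_{m,p}(x)=m^{a}Z_{1,p}(m^{b}x)$, with $b=\frac{p-2}{4-N(p-2)}$. Hence
\[
\lambda(m)=\lambda(1)\,m^{2b},\qquad \lambda(1)=-E_p'(Z_{1,p},\R^N)(Z_{1,p}),
\]
up to the normalization of the mass. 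Similarly, the first eigenvalue of $E''_p(Z_{m,p},\R^N)$ restricted to the relevant space scales as $\beta_1(p,N,m)=m^{2b}\beta_1(p,N,1)$.

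The constrained Hessian at $Z$ is the quadratic form
\[
Q(\varphi)=\int_{\R^N\x\cG}|\nabla_x\varphi|^2+|\partial_y\varphi|^2+\lambda\varphi^2-(p-1)Z^{p-2}\varphi^2 ,
\]
taken on $\{\varphi:\int Z\varphi=0\}$. Decompose $\varphi(x,y)=\bar\varphi(x)+\psi(x,y)$, where $\bar\varphi$ is the average over $\cG$ and $\int_\cG\psi(x,y)\,dy=0$ for a.e. $x$. Since $Z$ does not depend on $y$, the cross terms vanish and $Q(\varphi)=l(\cG)\,Q_{\R^N}(\bar\varphi)+Q(\psi)$. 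Moreover, the constraint only involves $\bar\varphi$.

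The first term is the Hessian of the $N$-dimensional problem at its ground state. It is nonnegative on the tangent space and degenerate only along translations; this is the standard nondegeneracy of the NLS soliton in the $L^2$-subcritical range. Expanding $\psi$ in Kirchhoff eigenfunctions $\phi_k$ of $\cG$ (with $k\ge1$ and eigenvalues $\alpha_k\ge\alpha_1$) gives
\[
Q(\psi)=\sum_k\Big(\langle L\psi_k,\psi_k\rangle+\alpha_k\|\psi_k\|^2\Big),\qquad L=-\Delta_x+\lambda-(p-1)Z^{p-2}.
\]
Here $\psi_k$ is unconstrained, so the relevant quantity is the bottom of the spectrum of $L$, which is $\beta_1(p,N,\mu/l(\cG))<0$. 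Thus $Q(\psi)$ is positive definite if $\alpha_1>-\beta_1(p,N,\mu/l)$, and negative in some direction $\psi=w(x)\phi_1(y)$ if the reverse inequality holds.

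Using the scaling and the paper's convention relating $\beta_1$ to the quantity $E'_p(Z_{1,p})(Z_{1,p})-\beta_1$, this becomes
\[
\alpha_1\gtrless (\mu/l)^{2b}\bigl(E_p'(Z_{1,p})(Z_{1,p})-\beta_1(p,N,1)\bigr),
\]
that is, $\mu\lessgtr\mu_1$. The exponent $1/(2b)=\frac{4-N(p-2)}{2(p-2)}$ matches the statement. I would verify this sign bookkeeping against Definition \ref{primo autovalore E''}, which presumably defines $\beta_1$ with $\lambda$ subtracted.

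\textbf{Part (ii).} If $\mu>\mu_1$, take $\varphi=w\phi_1$, where $w$ is the first eigenfunction of $L$. This $\varphi$ is tangent because $\int_\cG\phi_1=0$. Set $u_t=\sqrt\mu\,(Z+t\varphi)/\|Z+t\varphi\|_2$. A second-order Taylor expansion gives $E_p(u_t)=E_p(Z)+\tfrac{t^2}{2}Q(\varphi)+o(t^2)<E_p(Z)$. This uses that $E_p$ is $C^2$ on $H^1$ for $p<2^*$, which follows from the Gagliardo--Nirenberg and Sobolev inequalities of Section \ref{sez background}.

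\textbf{Part (i), main obstacle.} If $\mu<\mu_1$, positivity of the Hessian does not immediately give local minimality, for two reasons: the translation kernel, and the fact that $Q$ is only known to be nonnegative rather than uniformly coercive. I would argue by contradiction. Suppose $u_n\to Z$ in $H^1_\mu$ with $E_p(u_n)<E_p(Z)$. Modulo $x$-translations, choose $u_n=Z+\varphi_n$ with $\varphi_n$ orthogonal to $\partial_{x_i}Z$; this is possible by the implicit function theorem, using that $E_p$ is translation invariant.

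Now write $\varphi_n=\bar\varphi_n+\psi_n$. For the averaged part, since $\int_\cG|\partial_y u|^2\ge0$ and the graph average is controlled, compare $E_p(u_n)$ with $E_p(\bar u_n)$; Jensen/Hölder in $y$ controls the $L^p$ term. Then use that $Z$ is a local minimizer on $\R^N$ with coercive Hessian modulo translations. For the oscillating part, use the uniform coercivity $Q(\psi)\ge c\|\psi\|_{H^1}^2$, which follows from $\alpha_k\ge\alpha_1>-\beta_1$ together with interpolation to absorb the $\nabla_x$ term. The cubic and higher remainders are $o(\|\varphi_n\|_{H^1}^2)$, giving a contradiction.

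The delicate point is the coercivity of $Q$ uniformly on the complement of the translation kernel and the constraint normal. I would obtain it by the standard spectral argument: $L$ restricted to the tangent space has exactly one negative direction, which is removed by the constraint, and its kernel consists of translations.
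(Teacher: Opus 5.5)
Your proposal is correct, and its skeleton matches the paper's. Both split the perturbation into a graph average and a zero-mean part with vanishing cross terms. Both bound $Q(\psi)\ge(\inf\sigma(L)+\alpha_1(\cG))\|\psi\|_2^2$ and upgrade this to $H^1$-coercivity by interpolation. Both use the $m^{2b}$ scaling, and for (ii) the destabilizing direction $w(x)\phi_1(y)$ along the normalized curve.

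To settle your hedge on conventions: in Definition \ref{primo autovalore E''} the form $E_p''(Z,\R^N)(v,v)=\|\nabla v\|_2^2-(p-1)\int Z^{p-2}v^2$ has no multiplier term. Hence $\inf\sigma(L)=\beta_1+\kappa$, where $\kappa=-\frac{l(\cG)}{\mu}E_p'(Z,\R^N)(Z)$ equals your multiplier $\lambda$. Your criterion is therefore exactly the paper's condition $\Lambda=\beta_1+\kappa+\alpha_1(\cG)\gtrless 0$ in Proposition \ref{prop mu1}.

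The genuine difference is in (i). You follow the classical orbital-stability route: modulate out translations, then prove coercivity of the full constrained Hessian. This requires coercivity of the Euclidean Hessian on $\{Z,\partial_{x_i}Z\}^\perp$, which is true but not free. Nonnegativity on $Z^\perp$ follows from global minimality. The fact that the only null directions are translations needs two further inputs: $\ker L=\mathrm{span}\{\partial_{x_i}Z\}$ (Weinstein, Kwong), and $\langle L^{-1}Z,Z\rangle\neq 0$ (Vakhitov--Kolokolov, using $p\neq 2+4/N$). It is not simply ``the constraint removing the negative direction''.

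The paper avoids all of this. For $w\in H^1_\mu$ near $Z$ it proceeds as follows:
\begin{itemize}
\item it expands $E_p$ around the average $w_1$ in the direction $w_2$; the linear term vanishes because $\int_{\cG}w_2=0$;
\item it bounds $E_p(w_1,\R^N\times\cG)\ge l(\cG)\cE_p(m-s/l(\cG),\R^N)$ using the \emph{global} minimality of the soliton at the reduced mass;
\item it uses the explicit scaling of $\cE_p(\cdot,\R^N)$ to rewrite this as $E_p(Z,\R^N\times\cG)+\frac{\kappa}{2}s+o(s)$, with $s=\|w_2\|_2^2$.
\end{itemize}
The mass defect produces exactly the $\kappa\|w_2\|_2^2$ term. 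So only coercivity on the zero-mean subspace is needed, and the translation kernel never appears. Your route, in exchange, gives quadratic growth of the energy in the full distance to the orbit of translates, whereas the paper's estimate controls only $\|w_2\|_{H^1}^2$.

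Finally, drop the sentence about comparing $E_p(u_n)$ with $E_p(\bar u_n)$ via Jensen. It is not needed in your Hessian argument, and as stated it fails for two reasons:
\begin{itemize}
\item Jensen gives $\|\bar u_n\|_p\le\|u_n\|_p$, which is the wrong direction for a lower bound on $E_p(u_n)$;
\item $\bar u_n$ has mass $\mu-\|\psi_n\|_2^2<\mu$, so minimality of $Z$ at mass $m$ cannot be applied to it without the mass-defect correction described above.
\end{itemize}
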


The interest of this formula is that it decouples the two ingredients: the graph enters only through its total length and through $\alpha_1(\cG)$, while the remaining quantities depend only on the soliton of the NLS equation on $\R^N$. In particular, among graphs of fixed length, the larger $\alpha_1(\cG)$, the wider the range of masses for which the semi-trivial solution is stable, which is the exact analogue of what happens for the constant solution on a compact graph \cite{CacDovSer}.

\subsection*{Organization of the paper and notation}

Sections \ref{sez spazio}--\ref{sez background} contain the functional
setting described above. Section \ref{sez sottocritico} deals with the
subcritical and critical regimes and with the dimensional crossover of the
ground states, Section \ref{supercritical case} with the local minimizers in the
range $(2_*, 2+4/N)$, and Section \ref{sez minimalita} with the local minimality
of the semi-trivial solution.

Throughout the paper, $\cG$ is a connected metric graph with edge set $\mathfrak{E}$ and vertex set $\mathcal{V}$, and $l(\cG)$ denotes its total length; $\cG$ is compact from Section \ref{sez background} on. We write $\|\cdot\|_p$ and $\|\cdot\|_{H^1}$ for the norms of $L^p(\R^N \x \cG)$ and $H^1(\R^N \x \cG)$, specifying the domain whenever it is different, and $\|\nabla_{x,y}u\|_2^2=\|\nabla_xu\|_2^2+\|\partial_yu\|_2^2$. Positive constants denoted by $C$ may change from line to line.

\section{Definition of the function space \texorpdfstring{$H^1(\R^N\x\cG)$}{H1(RN X G)}}\label{sez spazio}

In this section we give two different definitions of the Sobolev space $H^1(\R^N\x\cG)$, and we show their equivalence. We always denote by $x$ the variable in $\R^N$ and by $y$ the variable in $\cG$. Recall that a graph $\cG$ is uniquely identified by the sets of edges and vertices, denoted by $\mathfrak{E}$ and $\mathcal{V}$, respectively. In particular, each edge $e$ is associated with a length $l_e \in (0, +\infty]$. If $l_e$ is finite, $e$ is identified with a segment $[0, l_e]$, and is incident to two vertices in $\mathcal{V}$, which represent its endpoints (these may coincide, in which case $e$ is a self-loop). If $l_e=+\infty$, $e$ is identified with a half-line $[0, +\infty)$, and is incident to a single vertex in $\mathcal{V}$. Following \cite[Section 2]{2015}, a function $v: \mathcal{G} \rightarrow \R$ is identified with the family of functions $\{v_e\}_{e \in \mathfrak{E}}$, where $v_e$ is the restriction of $v$ on the edge $e$. Endowing each edge with the $1$-dimensional Lebesgue measure, for $1 \leq p <+\infty$ we define the $L^p(\cG)$-norm
	\[
	\|v\|_{L^p(\cG)}^p:=\sum\limits_{e \in \mathfrak{E}}\|v_e\|_{L^p(e)}^p,
	\]
	and define $L^p(\cG)$ spaces accordingly. Similarly, we define the $H^1(\cG)$-norm
	\[
	\|v\|_{H^1(\cG)}^2:=\sum\limits_{e \in \mathfrak{E}}\|v_e\|^2_{H^1(e)}=\sum\limits_{e \in \mathfrak{E}}\left(\|v_e\|^2_{L^2(e)}+\|v_e'\|^2_{L^2(e)}\right).
	\]
	By the standard one-dimensional Sobolev embedding, if $v$ has finite $H^1(\cG)$-norm of $v$, then each restriction $v_e$ admits a continuous representative on $e$. It is therefore natural to require, in addition, continuity at the vertices of $\cG$ in the definition of $H^1(\cG)$. More precisely, we say that $v \in H^1(\cG)$ if:
\begin{itemize}
	\item[$(i)$] for each edge $e$, $v_e \in H^1(e)$;
	\item[$(ii)$] $\sum_{e \in \mathfrak{E}} \|v_e\|_{H^1(e)}^2<+\infty$;
	\item [$(iii)$] $v$ is continuous at every vertex.
\end{itemize} 
Similarly, a function $u$ defined on $\R^N \times \cG$ can be identified with a vector of functions $\{u_e\}_{e \in \mathfrak{E}}$, where each $u_e$ is defined on $\R^N \times e$. Endowing $\R^N \times e$ (which can be seen as a subset of $\R^{N+1}$) with the $(N+1)$-dimensional Lebesgue measure, one can define $L^p$ spaces over $\R^N \times \cG$ in a natural way, with norm
\[
\|u\|_{L^p(\R^N \times \cG)}^p := \sum_{e \in \mathfrak{E}} \|u_e\|_{L^p(\R^N \times e)}^p.
\]

\begin{definition}
\label{def Verzini}
We define the Sobolev space $H^1(\R^N \x \cG)$ as the set of functions $u$ with domain $\R^N \x \cG$ such that:
\begin{itemize}
\item[$(i)$] $u \in L^2(\R^N \x \cG)$;
\item[$(ii)$] $u_e \in H^1(\mathbb{R}^N \x e)$ for every $e \in \mathfrak{E}$;
\item[$(iii)$] $\sum_{e\in \mathfrak{E}}\|u_e\|_{H^1(\R^N \x e)}^2 <+\infty$;
\item[$(iv)$] $\forall e_1, e_2 
\in \mathfrak{E}$ meeting at a vertex $v \in \mathcal{V}$, we have
\begin{equation}
\label{trace operator}
T_{\mathbb{R}^N \x \{v\}}(u_{e_1})=T_{\mathbb{R}^N \x \{v\}}(u_{e_2}),
\end{equation}
\end{itemize}
where $T_{\mathbb{R}^N \x \{v\}}:H^1(\mathbb{R}^N \x e) \rightarrow L^2(\R^N \x \{v\}) \simeq L^2(\R^N)$ denotes the trace operator. 

We equip $H^1(\R^N \x \cG)$ with the natural scalar product
\begin{equation}
\label{definition scalar product}
(u,v)_{H^1}=\sum_{e\in\mathfrak{E}}(u_e,v_e)_{H^1(\mathbb{R}^N\x e)}. 
\end{equation}
The induced norm is 
\begin{equation*}
    \|u\|_{H^1(\R^N \x \cG)}^2:=\sum\limits_{e \in \mathfrak{E}}\|u_e\|_{H^1(\R^N \x e)}^2=\|u\|^2_{L^2(\R^N \x \cG)}+\|\nabla_x u\|^2_{L^2(\R^N \x \cG)}+\|\partial_y u\|^2_{L^2(\R^N \x \cG)}.
\end{equation*}
\end{definition}
Observe that the continuity condition at the vertices, required in the definition of $H^1(\cG)$, is replaced with a compatibility of the traces at the vertices.
\begin{remark}
	Through standard arguments, it is straightforward to show that $H^1(\R^N \x \cG)$, endowed with the scalar product defined in \eqref{definition scalar product}, is a Hilbert space.
\end{remark}

Now we provide an alternative definition of $H^1(\R^N \x \cG)$.
\begin{definition}\label{def Soave}
\label{seconda def H1}
We define the Sobolev space $H^1(\R^N \x \cG)$ as the set of functions $u$ defined on $\R^N \x \cG$ such that:
\begin{itemize}
 \item[$(i)$] $u(\cdot, y) \in H^1(\R^N)$ for almost every $y \in \cG$;
\item[$(ii)$] $\int_{\cG} \|u(\cdot, y)\|_{H^1(\R^N)}^2\,dy<+\infty$;
\item[$(iii)$] $u(x, \cdot) \in H^1(\cG)$ for almost every $x \in \R^N$;
\item[$(iv)$] $\int_{\R^N} \|u(x, \cdot)\|_{H^1(\cG)}^2\,dx<+\infty$.
\end{itemize}
\end{definition}
\begin{proposition}\label{prop: eq def}
	Definitions \ref{def Verzini} and \ref{def Soave} for $H^1(\R^N \x \cG)$ are equivalent.
\end{proposition}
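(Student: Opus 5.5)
The plan is to reduce everything to a single edge, i.e.\ to the slab $S_e:=\R^N\x e$ with $e=[0,l_e]$ or $[0,+\infty)$. The key classical fact is the characterization of $H^1$ on a product by absolute continuity on lines (ACL). A function $w\in L^2(S_e)$ belongs to $H^1(S_e)$ if and only if it has a representative such that:
\begin{itemize}
\item[(a)] for a.e.\ $y\in e$, $w(\cdot,y)\in H^1(\R^N)$, and for a.e.\ $x\in\R^N$, $w(x,\cdot)\in H^1(e)$, i.e.\ it is absolutely continuous on $e$;
\item[(b)] the fiberwise derivatives are square integrable over $S_e$.
\end{itemize}
In that case the weak derivatives coincide a.e.\ with the fiberwise ones. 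For the direction from Definition \ref{def Verzini} to Definition \ref{def Soave} I would prove this by density: approximate $w$ in $H^1(S_e)$ by functions smooth up to the boundary, e.g.\ by reflection across $\{y=0\}$ and $\{y=l_e\}$ followed by mollification. Then Fubini and a subsequence yield a.e.\ convergence of the fibers in $H^1(\R^N)$ and in $H^1(e)$. The converse is proved by testing against $\varphi\in C_c^\infty$ in the interior of $S_e$ and integrating by parts fiber by fiber, using Fubini. Summing over edges shows that conditions $(ii)$ and $(iv)$ of Definition \ref{def Soave} are exactly condition $(iii)$ of Definition \ref{def Verzini}, because the norms split as
\[
\|u\|_{H^1}^2=\int_\cG\|u(\cdot,y)\|^2_{H^1(\R^N)}\,dy+\int_{\R^N}\|u(x,\cdot)\|^2_{H^1(\cG)}\,dx-\|u\|_2^2 .
\]

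The remaining issue is the vertex condition: the equality of traces \eqref{trace operator} versus the continuity of $u(x,\cdot)$ at the vertices for a.e.\ $x$. Two ingredients are needed.
\begin{itemize}
\item[(1)] For $w\in H^1(S_e)$ with the good representative above, $T_{\R^N\x\{v\}}w(x)=w(x,v)$ for a.e.\ $x$, where $w(x,v)$ is the endpoint value of the absolutely continuous function $w(x,\cdot)$.
\item[(2)] With (1) in hand, continuity of $u(x,\cdot)$ at $v$ for a.e.\ $x$ is literally the statement that $T(u_{e_1})=T(u_{e_2})$ a.e.\ on $\R^N\x\{v\}$.
\end{itemize}
To prove (1), I would note that it holds for smooth-up-to-the-boundary functions. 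For general $w$, take smooth $w_n\to w$ in $H^1(S_e)$. On the one hand $T w_n\to Tw$ in $L^2(\R^N)$ by continuity of the trace. On the other hand, the one-dimensional estimate $|f(0)|^2\le C(\|f\|^2_{L^2(0,\delta)}+\|f'\|^2_{L^2(0,\delta)})$, integrated in $x$, shows that $w_n(\cdot,0)\to w(\cdot,0)$ in $L^2(\R^N)$, where $w(\cdot,0)$ is the fiberwise endpoint value. Hence the two limits agree.

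I expect the main obstacle to be the measure-theoretic bookkeeping: choosing one representative of $u$ that is simultaneously good in both variables on every edge. The representative produced by the ACL argument is defined by $x$-fibers, and I must still check that its $y$-fibers $u(\cdot,y)$ lie in $H^1(\R^N)$ for a.e.\ $y$. In the direction from Definition \ref{def Soave} to Definition \ref{def Verzini} the representative is given, and I must make sure that modifying it on a null set does not spoil either fiber property. I would handle this by working throughout with the limit of the smooth approximants along a subsequence converging a.e.\ and fiberwise in both directions simultaneously, which is possible by applying Fubini to $\sum_n\|w_{n+1}-w_n\|_{H^1}<\infty$. Countably many edges cause no extra difficulty thanks to the summability condition.
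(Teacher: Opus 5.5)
Your proposal is correct and follows the same route as the paper: reduce to each slab $\R^N\x e$, use Fubini to split the $H^1$ norm into its $x$-fiber and $y$-fiber parts, and identify the trace on $\R^N\x\{v\}$ with the endpoint values of the fibers $u_e(x,\cdot)$ to match trace compatibility with continuity at the vertices. The paper takes the ACL identification of weak and fiberwise derivatives, and this trace identification, for granted; your density argument and one-dimensional trace estimate fill in exactly those steps without changing the argument.
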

\begin{proof}
Assume that $u$ satisfies Definition \ref{def Verzini}. By points $(i)$ and $(ii)$, for every edge $e$ and almost every $y \in e$ we have $u(\cdot, y)=u_e(\cdot,y) \in L^2(\R^N)$ and $\left(\nabla_x u\right)(\cdot,y)=\left(\nabla_x u_e\right)(\cdot,y) \in L^2(\R^N)$; moreover $\nabla_x\left(u(\cdot,y)\right)=\left(\nabla_xu\right)(\cdot,y)$ a.e. on $\R^N$, for a.e. $y$. Hence, by Fubini's Theorem,
	\begin{equation}\label{finitezza norma 2 gradiente x}
		\int_{\cG}\|u(\cdot, y)\|_{H^1(\R^N)}^2\,dy=\|u\|_{L^2(\R^N \x \cG)}^2+\|\nabla_x u\|_{L^2(\R^N \x \cG)}^2<+\infty,
	\end{equation}
	which is points $(i)$ and $(ii)$ of Definition \ref{def Soave}; point $(iv)$ follows in the same way, with the $y$-derivatives in place of the $x$-gradients. As for point $(iii)$, arguing as above in the variable $x$ we get $u_e(x,\cdot) \in H^1(e)$ for every $e \in \mathfrak{E}$ and almost every $x \in \R^N$, so that each $u_e(x,\cdot)$ is continuous, and it only remains to check the continuity at the vertices required in the definition of $H^1(\cG)$ (see \cite{2015}). If $v \in \mathcal{V}$ and $e_1,e_2 \in \mathfrak{E}$ are incident at $v$, then point $(iv)$ of Definition \ref{def Verzini} gives
	\[
	\int_{\R^N}\left(T_{\R^N \x \{v\}}\left(u_{e_1}\right)-T_{\R^N \x \{v\}}\left(u_{e_2}\right)\right)^2\,dx=0,
	\]
	whence $u_{e_1}(x,v)=u_{e_2}(x,v)$ for almost every $x \in \R^N$, and $u(x,\cdot) \in H^1(\cG)$ for almost every $x$.

	Conversely, assume that $u$ satisfies Definition \ref{def Soave}. Point $(i)$ of Definition \ref{def Verzini} follows from $\|u\|^2_{L^2(\R^N \x \cG)}=\int_{\cG}\|u(\cdot,y)\|^2_{L^2(\R^N)}dy \leq \int_{\cG}\|u(\cdot,y)\|^2_{H^1(\R^N)}dy<+\infty$. By points $(ii)$ and $(iv)$ of Definition \ref{def Soave}, both $\nabla_xu_e$ and $\partial_yu_e$ belong to $L^2(\R^N \x e)$ for every $e$, and, reading \eqref{finitezza norma 2 gradiente x} backwards,
	\[
	\sum\limits_{e \in \mathfrak{E}}\|u_e\|^2_{H^1(\R^N \x e)}=\int_{\cG}\|u(\cdot,y)\|^2_{H^1(\R^N)}\,dy+\int_{\R^N}\|\partial_yu(x,\cdot)\|^2_{L^2(\cG)}\,dx<+\infty,
	\]
	which gives points $(ii)$ and $(iii)$. Finally, if $v \in \mathcal{V}$ and $e_1,e_2$ are incident at $v$, the continuity of $u(x,\cdot)$ for almost every $x$ yields
	\[
	\int_{\R^N}\left(T_{\R^N \x \{v\}}\left(u_{e_1}\right)-T_{\R^N \x \{v\}}\left(u_{e_2}\right)\right)^2\,dx=\int_{\R^N}\left(u_{e_1}(x,v)-u_{e_2}(x,v)\right)^2\,dx=0,
	\]
	that is, point $(iv)$.
\end{proof}

\begin{remark}\label{rmk open books}
	The space $H^1(\R^N \x \cG)$ is the form domain of the Kirchhoff realization of the Laplacian on the open book $\R^N \x \cG$, studied in \cite{openbook}; for $N=1$, Definition \ref{def Verzini} coincides with the space $H^1_D$ of \cite{LeCozShakarov}. In both references the Sobolev spaces are defined edgewise, with no compatibility at the bindings, which is imposed afterwards as a restriction, whereas here the compatibility of the traces is part of the definition and the balance of the fluxes is encoded in the weak formulation. The equivalence with the fiberwise Definition \ref{def Soave} seems instead to have no counterpart in that literature, and it is what allows us to argue one variable at a time in the sequel.
\end{remark}

\section{Partial rearrangements in \texorpdfstring{$H^1(\R^N\x\cG)$}{H1(RN X G)}}\label{sez riarrangiamenti}

The rearrangements introduced below are partial rearrangements: they act on one factor of the product $\R^N \times \cG$, leaving the other variable fixed. They are inspired by classical Steiner/Schwarz symmetrizations, but the graph factor requires using the monotone or symmetric rearrangements on metric graphs developed in \cite{2015}. The main point of this section is to verify that these operations are compatible with the product Sobolev space $H^1(\R^N \times \cG)$, including the trace conditions at the vertices, and that they do not increase either component of the Dirichlet energy.

Firstly, we define the $y$-decreasing rearrangement of a real-valued function $u \in H^1(\R^N \x \cG)$, and we prove its fundamental properties: the equimeasurability and the Polya-Szego type inequalities.

\begin{definition}
\label{def y decreasing rearr}
Let $u$ be a nonnegative function in $H^1(\R^N\x\cG)$. Letting $l(\cG)$ be the total length of $\cG$, consider 
\[
\cG^*:=
\begin{cases}
    [0, l(\cG)] \qquad \text{ if } l(\cG)<+\infty; \\
    [0, +\infty) \qquad \text{ if }l(\cG)=+\infty.
\end{cases} 
\]
We define the $y$-decreasing rearrangement of $u$ as the function $u^{*y}: \R^N \x \cG^* \rightarrow \R$ such that:
\begin{equation*}
		\text{for }y \in \cG^*,  x \in \R^N \text{ such that } u(x, \cdot) \in H^1(\cG), \qquad u^{*y}(x, y):= (u(x, \cdot))^{*}(y),
\end{equation*}
where $f^* \in L^2(\cG^*)$ is the decreasing rearrangement of the function $f \in H^1(\cG)$, as defined in \cite[Section 3]{2015}.
\end{definition}
\begin{remark}
\label{remark sulla misurabilit'a}
The function $u^{*y}$ is measurable. Indeed, let $\rho(x,t):=\mathcal{L}^1\left(\{y \in \cG: \, u(x,y)>t\}\right)$, which is measurable in $x$ for every fixed $t$ by Tonelli's theorem, and nonincreasing in $t$ for every fixed $x$. By the definition of the decreasing rearrangement given in \cite{2015}, and by the monotonicity of $\rho(x,\cdot)$, which allows one to restrict to rational levels,
	\[
	\left\{(x,y) \in \R^N \x \cG^*: \  u^{*y}(x,y) < t\right\}=\left\{(x,y): \exists\, 0 \leq s<t \, : \, \rho(x,s)\leq y\right\}=\bigcup\limits_{\substack{0 \leq s<t \\ s \in \mathbb{Q}}}\left\{(x,y): \rho(x,s)\leq y\right\},
	\]
	which is a countable union of measurable sets.

\end{remark}
The following lemmas are fundamental to proving the main properties of the $y$-decreasing rearrangement. We first state the Hardy--Littlewood inequality for rearrangements on graphs. We omit the proof, which follows step by step the arguments presented in \cite[Theorem 3.4]{Lieb_Loss}.
\begin{lemma}
\label{Hardy-Littlewood lemma}
Let $f,g$ be nonnegative functions in $H^1(\cG)$, and $f^*, g^* \in H^1(\cG^*)$ be the decreasing rearrangements of $f, g$. Then
\begin{equation}
\label{teorema base dei riarrangiamenti}
\int_{\cG}f(y)g(y)\,dy \leq \int_{\cG^*}f^*(y)g^*(y)\,dy   \qquad  \forall f,g \in H^1(\cG).
\end{equation}
\end{lemma}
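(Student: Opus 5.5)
The plan is to follow the proof of the Hardy--Littlewood inequality in \cite[Theorem 3.4]{Lieb_Loss}, via the layer-cake representation. Rearrangements on metric graphs, in the sense of \cite{2015}, are equimeasurable. The decreasing rearrangement $f^*$ is built so that
\[
\mathcal L^1(\{y\in\cG^*: f^*(y)>t\})=\mathcal L^1(\{y\in\cG: f(y)>t\})=:\rho_f(t)
\]
for every $t\geq 0$. Moreover, since $f^*$ is nonincreasing on $\cG^*=[0,l(\cG)]$ or $[0,+\infty)$, the superlevel set $\{f^*>t\}$ is an initial interval $[0,\rho_f(t))$ (up to endpoints). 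This is the only property of the rearrangement that I will use. Since $f,g\in H^1(\cG)\subset L^2(\cG)$, the product $fg$ is integrable and every integral below is finite.

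First I will write the layer-cake formula for nonnegative $f$:
\[
f(y)=\int_0^{\infty}\chi_{\{f>s\}}(y)\,ds,
\]
and the analogous formula for $g$. By Tonelli's theorem (everything is nonnegative and measurable),
\[
\int_{\cG}fg\,dy=\int_0^\infty\!\!\int_0^\infty \mathcal L^1\bigl(\{f>s\}\cap\{g>t\}\bigr)\,ds\,dt,
\]
and the same identity holds on $\cG^*$ with $f^*,g^*$ in place of $f,g$. The inequality then reduces to a pointwise comparison of the integrands, for every $s,t\geq 0$.

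The key step is this comparison. On one side, $\mathcal L^1(A\cap B)\leq \min\{\mathcal L^1(A),\mathcal L^1(B)\}$, so
\[
\mathcal L^1\bigl(\{f>s\}\cap\{g>t\}\bigr)\le\min\{\rho_f(s),\rho_g(t)\}.
\]
On the other side, $\{f^*>s\}$ and $\{g^*>t\}$ are both initial intervals of $\cG^*$, so they are nested, and the measure of their intersection is exactly $\min\{\rho_f(s),\rho_g(t)\}$. Integrating in $s$ and $t$ then gives \eqref{teorema base dei riarrangiamenti}.

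I do not expect a real obstacle. The only point needing care is to make sure that the rearrangement of \cite{2015} has superlevel sets that are exactly initial intervals with the same measure. This follows from its definition through the distribution function, $f^*(y)=\inf\{t\geq 0:\rho_f(t)\leq y\}$: this gives $\{f^*>t\}=[0,\rho_f(t))$, with possible differences only on null sets, which do not affect the integrals. When $l(\cG)=+\infty$ one must also check that each $\rho_f(t)$ with $t>0$ is finite, which holds because $f\in L^2(\cG)$.
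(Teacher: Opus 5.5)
Your proposal is correct and follows the same approach as the paper. The paper omits the proof and refers to the layer-cake argument of \cite[Theorem 3.4]{Lieb_Loss}, which is what you carry out. In your version the nested initial intervals $[0,\rho_f(s))$ and $[0,\rho_g(t))$ play the role of the centered balls, and with the definition $f^*(y)=\inf\{t\ge 0:\rho_f(t)\le y\}$ and the right-continuity of $\rho_f$ the identity $\{f^*>t\}=[0,\rho_f(t))$ holds exactly.
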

Secondly, we extend \cite[Theorem 7.10]{Lieb_Loss} to our setting. \begin{definition}
For any $v \in L^2(\R^N)$, $t>0$, let
\[
I_t^{\R^N}(v):=\frac{1}{t}\left(\|v\|^2_{L^2(\R^N)}-\iint_{\R^{2N}}K_t(x_1-x_2)v(x_1)v(x_2)\, dx_1\, dx_2\right),
\] 
where $K_t(x)$ denotes the heat kernel:
\[
K_t(x):=\frac{1}{\left(4\pi t\right)^{\frac{N}{2}}}e^{-\frac{|x|^2}{4t}}.
\]
This functional is defined in \cite[Section 7.10]{Lieb_Loss}. Now, let $I_t: L^2(\R^N \x \cG) \longrightarrow \R$ be such that, for every $u \in L^2(\R^N \x \cG)$, $t>0$:
\[
\begin{split}
I_t(u)&:= \frac{1}{t}\left(\int_{\mathcal{G}}\int_{\R^N} |u(x,y)|^2\,dx\,dy - \int_{\cG}\iint_{\R^{2N}}K_t(x_1-x_2)u(x_1,y)u(x_2,y)\, dx_1\, dx_2\,dy\right) 
=\int_{\cG}I_t^{\R^N}(u(\cdot, y))\, dy.
\end{split}
\]
\end{definition}
\begin{remark}\label{remark su It}
	For each $v \in L^2(\R^N)$ fixed, $I_t^{\R^N}(v)$ is nonnegative (actually positive, unless $v \equiv 0$) and decreasing in $t$, since it can be expressed as
	\[
	I_t^{\R^N}(v)=\int_{\R^N}\left(\frac{1-e^{-4\pi^2|k|^2t}}{t}\left|\mathcal{F}v(k)\right|^2\right)\,dk
	\]
	where $\mathcal{F} v$ is the Fourier transform of $v$ (see \cite[7.10, equation (4)]{Lieb_Loss}). This implies that, for every $u \in L^2(\R^N \x \cG)$, the function $t \mapsto I_t(u)$ is positive and decreasing for $t \in (0, +\infty)$.
\end{remark}
\begin{lemma}
\label{lem 7.10}
Let $u \in L^2(\R^N \x \cG)$. Then, 
\[
u(\cdot, y) \in H^1(\R^N) \text{ for almost every } y \in \cG \qquad \text{and} \qquad \int_{\cG}\|\nabla_x u(\cdot, y)\|_{L^2(\R^N)}^2\,dy<+\infty
\]
if and only if $I_t(u)$ is uniformly bounded in $t \in (0, +\infty)$. In this case, $\nabla_xu \in L^2$ (the gradient is understood in the weak sense), and
\[
 \lim\limits_{t \rightarrow 0^+}  I_t(u)=\|\nabla_x u\|_{L^2(\R^N\x\cG)}^2.
\]
\end{lemma}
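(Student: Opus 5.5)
The plan is to reduce Lemma \ref{lem 7.10} to the one-fiber statement \cite[Theorem 7.10]{Lieb_Loss}, applied for a.e. $y \in \cG$, and then integrate in $y$ using Remark \ref{remark su It} and monotone convergence. Fubini gives $u(\cdot,y) \in L^2(\R^N)$ for a.e. $y$. We also have the identity $I_t(u)=\int_{\cG} I_t^{\R^N}(u(\cdot,y))\,dy$. By Remark \ref{remark su It}, the integrand is nonnegative and nonincreasing in $t$, so $\sup_{t>0} I_t(u)=\lim_{t\to 0^+}I_t(u)$.

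For the pointwise limit, the Fourier representation gives, for each fixed $v\in L^2(\R^N)$,
\[
\lim_{t\to0^+}I_t^{\R^N}(v)=\int_{\R^N}4\pi^2|k|^2|\mathcal F v(k)|^2\,dk\in[0,+\infty].
\]
This follows by monotone convergence, because $(1-e^{-4\pi^2|k|^2t})/t$ increases to $4\pi^2|k|^2$ as $t\downarrow 0$. The limit is finite if and only if $v\in H^1(\R^N)$, and in that case it equals $\|\nabla v\|_{L^2(\R^N)}^2$; this is exactly \cite[Theorem 7.10]{Lieb_Loss}.

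Applying monotone convergence once more, now in $y$ (the integrands increase as $t\downarrow0$), we get
\[
\lim_{t\to0^+} I_t(u)=\int_{\cG}\lim_{t\to0^+}I_t^{\R^N}(u(\cdot,y))\,dy .
\]
For the direction ($\Rightarrow$): the right-hand side equals $\int_\cG\|\nabla_x u(\cdot,y)\|_2^2\,dy<\infty$. Since $I_t(u)$ is monotone in $t$, this single value bounds $I_t(u)$ uniformly for all $t>0$. For the direction ($\Leftarrow$): if $\sup_t I_t(u)<\infty$, the right-hand side is finite. Hence the integrand is finite for a.e. $y$, which gives $u(\cdot,y)\in H^1(\R^N)$ for a.e. $y$ together with the integrability of $\|\nabla_x u(\cdot,y)\|_2^2$. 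In both cases the limit formula follows, once $\int_\cG\|\nabla_xu(\cdot,y)\|^2\,dy$ is identified with $\|\nabla_x u\|^2_{L^2(\R^N\x\cG)}$.

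The main obstacle is that identification, namely that the fiberwise gradients assemble into the weak $x$-gradient of $u$ on each face $\R^N\x e$. Here $y\mapsto \nabla_x(u(\cdot,y))$ must first be shown to be jointly measurable. The plan is to obtain it as an a.e. limit of measurable difference quotients in $L^2(\R^N\x e)$, or, equivalently, via the measurable map $y\mapsto \mathcal F_x u(\cdot,y)$ multiplied by $2\pi i k$. Call the resulting function $G$; it lies in $L^2$ by the bound above. Then, for a test function $\varphi\in C_c^\infty(\R^N\x \mathrm{int}\,e)$, Fubini and the fiberwise integration by parts give $\int u\,\partial_{x_j}\varphi=-\int G_j\varphi$. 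So $G=\nabla_xu$ weakly on each face, and summing over the edges yields the claimed norm identity.
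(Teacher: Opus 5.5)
Your proposal is correct and follows essentially the same route as the paper: apply \cite[Theorem 7.10]{Lieb_Loss} fiberwise for a.e.\ $y$, use the monotonicity in $t$ from Remark~\ref{remark su It}, and exchange the limit $t\to 0^+$ with the $y$-integral by monotone convergence in both directions. Your extra step identifies the fiberwise gradients $\nabla_x(u(\cdot,y))$ with the weak $x$-gradient of $u$ on each face, via the partial Fourier transform or difference quotients and Fubini against test functions; the paper uses this implicitly when it writes $\int_{\cG}\|\nabla_x u(\cdot,y)\|_{L^2(\R^N)}^2\,dy=\|\nabla_x u\|_{L^2(\R^N\x\cG)}^2$, so including it makes your argument more complete.
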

\begin{proof}
At first, we prove the direct implication. Since $u(\cdot, y) \in H^1(\R^N)$ for almost every $y \in \cG$, we can apply \cite[Theorem 7.10]{Lieb_Loss} and deduce that $I^{\R^N}_t(u(\cdot, y))$ is uniformly bounded in $t \in (0, +\infty)$. Moreover,
\[
\sup\limits_{t>0}I^{\R^N}_t(u(\cdot, y))=\lim\limits_{t \rightarrow 0^+} I^{\R^N}_t(u(\cdot, y))= \|\nabla_x u(\cdot, y)\|_{L^2(\R^N)}^2.
\]
Then, for every $t \in (0, +\infty)$,
\[
I_t(u)=\int_{\cG}I^{\R^N}_t(u(\cdot, y))\,dy\leq \int_{\cG}\|\nabla_x u(\cdot, y)\|_{L^2(\R^N)}^2\,dy=\|\nabla_x u\|_{L^2(\R^N \x \cG)}^2,
\]
hence $I_t$ is uniformly bounded in $t \in (0, +\infty)$.
Moreover, by the Monotone Convergence Theorem,
\[
\|\nabla_x u\|_{L^2(\R^N \x \cG)}^2=\int_{\cG}\|\nabla_x u(\cdot, y)\|_{L^2(\R^N)}^2\,dy=\int_{\cG}\lim\limits_{t\rightarrow 0}I_t^{\R^N}(u(\cdot, y))\,dy=\lim\limits_{t\rightarrow 0}\int_{\cG}I_t^{\R^N}(u(\cdot, y))\,dy=\lim\limits_{t\rightarrow 0}I_t(u),
\]
which is the desired result. \\
Now, let us manage the opposite implication. For almost every $y \in \cG$, the function $I_t^{\R^N}(u(\cdot, y))$ is positive and decreasing in $t$ by Remark \ref{remark su It}. By the Monotone Convergence Theorem,
\[
\begin{split}
\int_{\cG}\sup\limits_{t \in (0, +\infty)}I_t^{\R^N}(u(\cdot, y))\,dy &= \int_{\cG}\lim\limits_{t \rightarrow 0} I_t^{\R^N}(u(\cdot, y))\,dy=\lim\limits_{t \rightarrow 0}\int_{\cG}I_t^{\R^N}(u(\cdot, y))\,dy\\
&= \lim\limits_{t \rightarrow 0}I_t(u) =\sup\limits_{t \in (0, +\infty)}I_t(u)\leq C, 
\end{split}
\]
for some $C>0$, since $I_t(u)$ is uniformly bounded. Thus,
\[
\text{for almost every } y \in \cG, \qquad\sup\limits_{t \in (0, +\infty)}I_t^{\R^N}(u(\cdot, y)) <+\infty.
\]
Then, by \cite[Theorem 7.10]{Lieb_Loss}, for almost every $y \in \cG$,
\[
u(\cdot, y) \in H^1(\R^N), \text{ and }\|\nabla_x u(\cdot, y)\|_{L^2(\R^N)}^2=\lim\limits_{t \rightarrow 0} I_t^{\R^N}(u(\cdot, y)).
\]
Integrating over $\cG$, and extracting the limit by the Monotone Convergence Theorem:
\[
\|\nabla_x u\|_{L^2(\R^N \x \cG)}^2=\int_{\cG}\|\nabla_x u(\cdot, y)\|_{L^2(\R^N)}^2\,dy=\int_{\cG}\lim\limits_{t \rightarrow 0}I_t^{\R^N}(u(\cdot, y))\, dy=\lim\limits_{t \rightarrow 0}\int_{\cG}I_t^{\R^N}u(\cdot, y))\, dy=\lim\limits_{t \rightarrow 0}I_t(u),
\]
which is the desired result.
\end{proof}

\begin{proposition}
\label{propr y-riarrangiamento decr}
Let $u \in H^1(\R^N\x\cG)$ be a nonnegative function. Then, $u^{*y} \in H^1(\R^N \x \cG^*)$, and:
\begin{itemize}
    \item $\|u^{*y}\|_{L^p(\R^N\x\cG^*)}^p=\|u\|_{L^p(\R^N\x\cG)}^p$, for every $p \geq 1$;
    \item $\|\partial_y u^{*y}\|_{L^2(\R^N\x\cG^*)}^2 \leq \|\partial_y u\|_{L^2(\R^N\x\cG)}^2$;
    \item $\|\nabla_x u^{*y}\|_{L^2(\R^N\x\cG^*)}^2 \leq \|\nabla_x u\|_{L^2(\R^N\x\cG)}^2$.
\end{itemize}
\end{proposition}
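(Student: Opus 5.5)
We prove the three properties in turn, working fiberwise through Definition \ref{def Soave} and Proposition \ref{prop: eq def}, and then deduce that $u^{*y}\in H^1(\R^N\x\cG^*)$.

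\emph{Step 1: $L^p$ norms and the $y$-derivative.} For a.e. $x$ we have $u(x,\cdot)\in H^1(\cG)$. By the one-dimensional theory of \cite[Section 3]{2015}, $(u(x,\cdot))^*\in H^1(\cG^*)$ is equimeasurable with $u(x,\cdot)$ and satisfies
\[
\|\partial_y (u(x,\cdot))^*\|_{L^2(\cG^*)}\le \|\partial_y u(x,\cdot)\|_{L^2(\cG)} .
\]
Measurability of $u^{*y}$ holds by Remark \ref{remark sulla misurabilit'a}. Integrating in $x$ and applying Tonelli's theorem gives the $L^p$ identity for every $p\ge1$ and the $\partial_y$ inequality. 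This also yields items (iii) and (iv) of Definition \ref{def Soave} for $u^{*y}$, since $\|(u(x,\cdot))^*\|_{H^1(\cG^*)}\le\|u(x,\cdot)\|_{H^1(\cG)}$. Continuity at vertices is not an issue, because $\cG^*$ is a single interval.

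\emph{Step 2: the $x$-gradient via the heat kernel.} This is the main step. We use Lemma \ref{lem 7.10}, so it suffices to show that $I_t(u^{*y})\le I_t(u)$ for every $t>0$. Once this holds, $I_t(u^{*y})$ is uniformly bounded, and Lemma \ref{lem 7.10} gives $u^{*y}(\cdot,y)\in H^1(\R^N)$ for a.e. $y$. Letting $t\to0^+$ then gives $\|\nabla_x u^{*y}\|_2^2\le\|\nabla_x u\|_2^2$, which is items (i) and (ii) of Definition \ref{def Soave}, and Proposition \ref{prop: eq def} concludes that $u^{*y}\in H^1(\R^N\x\cG^*)$.

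Since the $L^2$ norms coincide by Step 1, the inequality $I_t(u^{*y})\le I_t(u)$ reduces to showing
\[
\iint_{\R^{2N}}K_t(x_1-x_2)\int_{\cG}u(x_1,y)u(x_2,y)\,dy\,dx_1\,dx_2\le \iint_{\R^{2N}}K_t(x_1-x_2)\int_{\cG^*}u^{*y}(x_1,y)u^{*y}(x_2,y)\,dy\,dx_1\,dx_2 .
\]
Because $K_t>0$, this follows by integrating pointwise in $(x_1,x_2)$ the inequality
\[
\int_{\cG}u(x_1,\cdot)u(x_2,\cdot)\le\int_{\cG^*}(u(x_1,\cdot))^*(u(x_2,\cdot))^* ,
\]
which is Lemma \ref{Hardy-Littlewood lemma} applied to the nonnegative functions $f=u(x_1,\cdot)$ and $g=u(x_2,\cdot)$. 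These belong to $H^1(\cG)$ for a.e. $(x_1,x_2)$.

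\emph{Main obstacle.} The delicate point is justifying the interchange of the integrals and the measurability of the integrands in Step 2. Both integrands are nonnegative, so Tonelli's theorem applies once joint measurability is known. For the rearranged side, joint measurability follows from Remark \ref{remark sulla misurabilit'a}. Finiteness of both sides follows from Young's inequality, since $\|K_t\|_{L^1}=1$ and all $L^2$ norms are finite. No approximation argument is needed, because the bound holds for each fixed $t$ before passing to the limit.
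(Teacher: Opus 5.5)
Your proof is correct and follows essentially the same route as the paper. The $L^p$ norms and $\partial_y$ are handled fiberwise via the graph rearrangement results of \cite{2015}, and the $x$-gradient via the heat-kernel characterization of Lemma \ref{lem 7.10}, combined with the Hardy--Littlewood inequality on the graph fibers and the positivity of $K_t$. The only cosmetic difference is that you prove $I_t(u^{*y})\le I_t(u)$ for each fixed $t$ before letting $t\to0^+$. This gives the uniform bound directly, whereas the paper compares the limits and then invokes monotonicity in $t$.
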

\begin{proof}
We prove the first point. For every $p \geq 1$,
\begin{equation*}
\begin{split}
    \|u^{*y}\|_{L^p(\R^N\x\cG^*)}^p &= \int_{\R^N} \|u^{*y}(x, \cdot)\|_{L^p(\cG^*)}^p\, dx = \int_{\R^N} \|(u(x, \cdot))^*\|_{L^p(\cG^*)}^p\, dx\\
    &=\int_{\R^N} \|u(x, \cdot)\|_{L^p(\cG)}^p\, dx= \|u\|_{L^p(\R^N \x \cG)}^p.
\end{split}
\end{equation*}
We used the fact that the $p$-norms of functions in $H^1(\mathcal{G})$ are preserved, when passing to the decreasing rearrangement, see \cite[Section 3]{2015}. Concerning the second point, we assert that  
\[
\text{for almost every } x \in \R^N, \qquad u^{*y}(x, \cdot) \in H^1(\cG^*),
\]
since the decreasing rearrangement of a function in $H^1(\cG)$ is in $H^1(\cG^*)$
(recall that $u(x, \cdot) \in H^1(\cG)$ for almost every $x \in \R^N$ by Definition \ref{seconda def H1}). Moreover, 
\begin{equation}
\label{calcoli polya-szego y-rearr}
\begin{split}
\|\partial_y u^{*y}\|_{L^2(\R^N\x\cG^*)}^2&=\int_{\R^N}\|\partial_y u^{*y}(x, \cdot)\|_{L^2(\cG^*)}^2\, dx=\int_{\R^N}\|\partial_y \left(u(x, \cdot)\right)^*\|_{L^2(\cG^*)}^2\, dx \\
& \leq \int_{\R^N}\|\partial_y u(x, \cdot)\|_{L^2(\cG)}^2\, dx=\|\partial_y u\|_{L^2(\R^N\x\cG)}^2.
\end{split}
\end{equation}
The inequality is due to the fact that the Dirichlet integral of a function in $H^1(\mathcal{G})$ decreases when passing to the decreasing rearrangement, as stated in \cite[Section 3]{2015}. \\
The proof of the last inequality is more involved, and relies on the proof of \cite[Lemma 7.17]{Lieb_Loss}. Combining Lemma \ref{lem 7.10} and Fubini's Theorem:
\begin{equation}
\label{disug polya szego}
\begin{split}
&\|\nabla_xu\|_{L^2(\R^N \x \cG)}^2= \lim\limits_{t \rightarrow 0}I_t(u)\\
&= \lim\limits_{t\rightarrow 0}\frac{1}{t} \left( \|u\|_{L^2(\R^N \times \cG)}^2 -\iint_{\R^{2N}}\left(\int_{\cG}u(x_1,y)u(x_2,y)\,dy\right)K_t(x_1-x_2)\, dx_1\, dx_2\right) \\
 &\geq\lim\limits_{t\rightarrow 0}\frac{1}{t} \left(\|u^{*y}\|_{L^2(\R^N \times \cG^*)}^2-\iint_{\R^{2N}}\left(\int_{\cG^*}u^{*y}(x_1,y)u^{*y}(x_2,y)\,dy\right)K_t(x_1-x_2)\, dx_1\, dx_2\right) \\
 &= \lim\limits_{t\rightarrow 0}I_t(u^{*y}).
\end{split}
\end{equation}
The inequality results from applying \eqref{teorema base dei riarrangiamenti} on $u(x_1, \cdot)$, $u(x_2, \cdot)$ (which belong to $H^1(\cG)$ for almost every $x_1, x_2 \in \R^N$), combined with the fact that the $2$-norm of $u$ is maintained when passing to $u^{*y}$. This estimate and the monotonicity of $t \mapsto I_t(u^{*y})$ show that $I_t(u^{*y})$ is uniformly bounded for $t \in (0, +\infty)$. Thus, by Lemma \ref{lem 7.10}, 
\[
u^{*y}(\cdot, y) \in H^1(\R^N) \text{ for a.e. } y \in \cG^*, \,  \qquad \text{and}\qquad \lim\limits_{t \rightarrow 0^+} I_t(u^{*y})=\|\nabla_x u^{*y}\|_{L^2(\R^N \times \cG^*)}^2.
\]
This, combined with \eqref{disug polya szego}, gives the third point of the thesis. Finally, by Definition \ref{seconda def H1}, $u^{*y} \in H^1(\R^N \x\cG^*)$.
\end{proof}
Similarly, we are able to define the $y$-symmetric rearrangement of a function $u \in H^1(\R^N\x\cG)$.
\begin{remark}
	\label{circle}
	We denote by $S_1:=\partial B_1(0) \subset \R^2$ the circle of radius $1$, while $S_{\rho}$ is the circle of radius $\rho$ (notice that they can be seen as metric graphs). Often, it is convenient to identify $S_{\rho}$ with the segment $\left[-\pi \rho, \pi \rho\right]$, and establish that $f \in H^1(S_{\rho})$ if $f \in H^1\left(\left[-\pi \rho, \pi \rho\right]\right)$ and $f\left(-\pi \rho\right)=f\left(\pi \rho\right)$.
\end{remark}

\begin{definition}
	\label{def y symmetric rearr}
Let $u \in H^1(\R^N \x \cG)$ be a nonnegative function, and let 
\[
\widehat{\cG}:=
   \begin{cases}
      S_{\frac{l(\cG)}{2\pi}}\qquad &\text{ if } l(\cG)<+\infty;\\
        \mathbb{R} \qquad &\text{ if } l(\cG)=+\infty.
   \end{cases}
\]
The $y$-symmetric rearrangement of $u$ is 
the function $\widehat{u}^y: \R^N \x \widehat{\cG} \rightarrow \R$ defined by
\[
   \text{if } x \in \R^N, y \in \widehat{\cG} \text{ and } u(x, \cdot) \in H^1(\cG), \qquad    \widehat{u}^y(x, y):= \widehat{(u(x, \cdot))}(y),
\]
where $\widehat{f} \in H^1(\widehat{\cG})$ is the symmetric rearrangement of the function $f \in H^1(\cG)$ (the definition is given in \cite[Section 3]{2015}).
\end{definition}
\begin{remark}
	\label{remark symm rearr}
For the same arguments of Remark \ref{remark sulla misurabilit'a}, the function $\widehat{u}^y$ is measurable. Moreover, the Hardy-Littlewood inequality remains valid for symmetric rearrangements:
\[
\forall f,g \in H^1(\cG), \qquad \int_{\cG}f(y)g(y)\,dy \leq \int_{\widehat{\cG}}\widehat{f}(y)\widehat{g}(y)\,dy.
\]
The proof of this fact is identical to that of \cite[Theorem 3.4]{Lieb_Loss}, and is therefore omitted.
\end{remark}
\begin{proposition}
\label{propr y-riarrangiamento simm}
Let $u \in H^1(\R^N \x \cG)$ be a nonnegative function. Then, $\widehat{u}^y \in H^1(\R^N \x \widehat{\cG})$, and
\begin{itemize}
\item[($i$)] $\|\widehat{u}^{y}\|_{L^p(\R^N\x\widehat{\cG})}^p=\|u\|_{L^p(\R^N\x\cG)}^p$, for every $p \geq 1$;
\item[($ii$)] $\|\nabla_x \widehat{u}^{y}\|_{L^2(\R^N\x\widehat{\cG})}^2 \leq \|\nabla_x u\|_{L^2(\R^N\x\cG)}^2$;
\item[($iii$)] $\|\partial_y \widehat{u}^{y}\|_{L^2(\R^N\x\widehat{\cG})}^2 \leq \|\partial_y u\|_{L^2(\R^N\x\cG)}^2$, if, for almost every $x \in \R^N$, $u(x, \cdot)$ attains at least twice almost every value in its image.
\end{itemize}
\end{proposition}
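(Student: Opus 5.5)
I will follow the proof of Proposition \ref{propr y-riarrangiamento decr} step by step, replacing the decreasing rearrangement on $\cG^*$ by the symmetric one on $\widehat{\cG}$. Throughout I use the fiberwise Definition \ref{seconda def H1}. Membership of $\widehat{u}^y$ in $H^1(\R^N \x \widehat{\cG})$ will then follow once I check three things: $\widehat{u}^y(x,\cdot) \in H^1(\widehat{\cG})$ for a.e. $x$, $\widehat{u}^y(\cdot,y)\in H^1(\R^N)$ for a.e. $y$, and both integrated norms are finite.

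\textbf{Step 1: point $(i)$.} Fix $x$ with $u(x,\cdot)\in H^1(\cG)$. By \cite[Section 3]{2015}, $\widehat{(u(x,\cdot))}$ is equimeasurable with $u(x,\cdot)$, so $\|\widehat{u}^y(x,\cdot)\|_{L^p(\widehat\cG)}=\|u(x,\cdot)\|_{L^p(\cG)}$. Measurability of $\widehat u^y$ holds by Remark \ref{remark symm rearr}, so I can integrate in $x$ via Tonelli. This gives $(i)$ for every $p\ge1$, and in particular for $p=2$.

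\textbf{Step 2: point $(ii)$.} This is the heat-kernel argument of \eqref{disug polya szego}. First I write $I_t(u)$ as $\frac1t\bigl(\|u\|_2^2-\iint K_t(x_1-x_2)\int_\cG u(x_1,y)u(x_2,y)\,dy\,dx_1dx_2\bigr)$. Since $K_t>0$, the Hardy--Littlewood inequality of Remark \ref{remark symm rearr}, applied to $u(x_1,\cdot)$ and $u(x_2,\cdot)$, bounds the inner integral by $\int_{\widehat\cG}\widehat u^y(x_1,y)\widehat u^y(x_2,y)\,dy$. Together with Step 1 for $p=2$, this gives $I_t(\widehat u^y)\le I_t(u)\le\|\nabla_x u\|_2^2$ for every $t>0$. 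Lemma \ref{lem 7.10} applies verbatim with $\cG$ replaced by $\widehat\cG$, since its proof only uses Fubini on the fiber variable. It yields $\widehat u^y(\cdot,y)\in H^1(\R^N)$ for a.e. $y$ and $\|\nabla_x\widehat u^y\|_2^2=\lim_{t\to0}I_t(\widehat u^y)\le\|\nabla_xu\|_2^2$.

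\textbf{Step 3: point $(iii)$ and membership in $H^1(\widehat\cG)$.} The transversal part is where the hypothesis enters, and it is the main obstacle. The P\'olya--Szeg\H{o} inequality for symmetric rearrangements on graphs \cite[Section 3]{2015} states that $\widehat f\in H^1(\widehat\cG)$ and $\|\widehat f'\|_{L^2(\widehat\cG)}\le\|f'\|_{L^2(\cG)}$, provided $f\in H^1(\cG)$ attains almost every value in its image at least twice. Its proof uses the coarea formula: the level sets of $f$ contain at least two points, while those of $\widehat f$ contain exactly two. Under the hypothesis, this applies to $f=u(x,\cdot)$ for a.e. $x$, giving $\widehat u^y(x,\cdot)\in H^1(\widehat\cG)$. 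For $\widehat\cG=S_{l(\cG)/2\pi}$ this includes the periodicity condition of Remark \ref{circle}. Integrating the pointwise inequality in $x$ gives $(iii)$.

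What remains delicate is the regularity of $\widehat u^y(x,\cdot)$ \emph{without} the hypothesis. If the hypothesis fails, $\widehat{(u(x,\cdot))}$ need not be controlled in $L^2$-gradient (it is the symmetrization of $\partial_y$ that can double). I would still use that $\widehat f$ is Lipschitz-type bounded by $f^*$: indeed $\widehat f(y)=f^*(2|y|)$, so $\|\widehat f'\|_2^2=2\|(f^*)'\|_2^2\le2\|f'\|_2^2$ by Proposition \ref{propr y-riarrangiamento decr}. This relation shows that $\widehat u^y(x,\cdot)\in H^1(\widehat\cG)$ for a.e. $x$ in general, with the transversal norm finite (though with a factor $2$). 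Combined with Steps 1 and 2, this verifies Definition \ref{seconda def H1} for $\widehat u^y$ unconditionally, while the sharp inequality $(iii)$ holds under the stated hypothesis.
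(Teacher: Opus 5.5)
Your Steps 1--3 coincide with the paper's proof: fiberwise equimeasurability gives $(i)$; the heat-kernel argument of \eqref{disug polya szego}, with the Hardy--Littlewood inequality of Remark \ref{remark symm rearr}, gives $(ii)$; and \cite[Proposition 3.1]{2015}, integrated in $x$, gives $(iii)$. Your extra observation $\widehat f(y)=f^*(2|y|)$ shows $\widehat u^y\in H^1(\R^N\x\widehat\cG)$ without the hypothesis of $(iii)$, a point the paper leaves implicit, but the constant is off: $|\widehat f'(y)|^2=4|(f^*)'(2|y|)|^2$, so $\|\widehat f'\|_{L^2(\widehat\cG)}^2=4\|(f^*)'\|_{L^2(\cG^*)}^2\le 4\|f'\|_{L^2(\cG)}^2$, which is harmless there because only finiteness is needed.
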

\begin{proof}
The first two points can be proved by the same arguments presented in Proposition \ref{propr y-riarrangiamento decr}, without modifications. In particular, the second point relies on the Hardy-Littlewood inequality, which is discussed in Remark \ref{remark symm rearr}. The last point follows from the fact that if $f \in H^1(\cG)$ reaches at least twice almost every value in its image, then
\[
\widehat{f} \in H^1(\widehat{\cG}) \qquad \text{ and }\qquad  \|(\widehat{f})'\|_{L^2(\widehat{\cG})}^2\leq \left\|f'\right\|_{L^2(\cG)}^2,
\]
see \cite[Proposition 3.1]{2015}. Then, replacing the $y$-decreasing rearrangement with the $y$-symmetric rearrangement in \eqref{calcoli polya-szego y-rearr}, we obtain the third point of the thesis.
\end{proof}

\begin{remark}
\label{remark sulla condiz (H)}
As observed in the proof of \cite[Theorem 2.3]{2015}, the additional condition of the third point of Proposition \ref{propr y-riarrangiamento simm} is ensured when $\cG$ can be covered by cycles.
\end{remark}

Now we introduce the $x$-symmetric rearrangement. Thanks to its properties, we will seek global and local minimizers which are radial and decreasing in the $x$-variable.

\begin{definition}
\label{def x riarr}
Let $u \in H^1(\R^N\x\cG)$ be a nonnegative function. Then its $x$-symmetric rearrangement is $u^{*x}:\R^N \x \cG \rightarrow \R$ defined by
\begin{equation*}
    \text{for } x \in \R^N, y \in \cG \text{ such that } u(\cdot, y) \in H^1(\R^N), \qquad u^{*x}(x,y):=u\left((\cdot, y)\right)^{*}(x),
\end{equation*}
where $f^* \in H^1(\R^N)$ denotes the symmetric rearrangement of $f \in H^1(\R^N)$, as defined in \cite[Section 3.3]{Lieb_Loss}. 
\end{definition}

\begin{remark}
	Arguing similarly to Remark \ref{remark sulla misurabilit'a}, $u^{*x}$ is measurable on $\R^N \x \cG$. Moreover,
	\[
	u^{*x}(\cdot, y) \in H^1(\R^N) \text{ for almost every } y \in \cG.
	\]
\end{remark}

The following lemma is the generalization of \cite[Theorem 7.10]{Lieb_Loss} for functions defined on the circle $S_1$ instead of $\mathbb{R}$: it will be used to prove the fundamental properties of the $x$-symmetric rearrangement.
\begin{lemma}
\label{lemma heat kernel su S1}
Let $f$ be a function in $L^2(S_1)$, and let
\[
K_t^{S_1}(y):=\frac{1}{2\pi}\sum\limits_{k=-\infty}^{+\infty}e^{-k^2t}e^{iky}=\frac{1}{2\pi}+\frac{1}{\pi}\sum\limits_{k=1}^{+\infty}e^{-k^2t}cos(ky).
\]
be the heat kernel of the unit circle (see, e.g., \cite[Exercise 10.18]{Heat_kernels}). Then, $f \in H^1\left(S_1\right)$ if and only if 
\[
J_t^{S_1}(f):=\frac{1}{t}\left(\|f\|^2_{L^2(S_1)}-\int_{-\pi}^{\pi}\int_{-\pi}^{\pi}K^{S_1}_t(y_1-y_2)f(y_1)f(y_2)\, dy_1\, dy_2\right),
\]
is uniformly bounded in $t$. In this case, $J_t^{S_1}(f)$ is decreasing in $t$, and
\begin{equation*}
    \|f'\|_{L^2(S_1)}^2=\lim\limits_{t \rightarrow 0^+} J^{S_1}_t(f).
\end{equation*}
\end{lemma}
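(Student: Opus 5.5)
The natural route is Fourier series on $S_1\simeq[-\pi,\pi]$, in the same spirit as the Fourier formula of Remark \ref{remark su It}. Write $f(y)=\sum_{k\in\mathbb Z}\hat f_k e^{iky}$ with $\hat f_k=\frac{1}{2\pi}\int_{-\pi}^{\pi}f(y)e^{-iky}\,dy$. Parseval gives $\|f\|^2_{L^2(S_1)}=2\pi\sum_k|\hat f_k|^2$. The double integral is $\int f\,(K^{S_1}_t*f)$, and the convolution is diagonal in the Fourier basis. The kernel has Fourier coefficients $\frac{1}{2\pi}e^{-k^2t}$, so $\widehat{(K^{S_1}_t*f)}_k=e^{-k^2t}\hat f_k$. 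Hence
\[
J^{S_1}_t(f)=2\pi\sum_{k\in\mathbb Z}\frac{1-e^{-k^2t}}{t}\,|\hat f_k|^2 .
\]
The interchange of sum and integral is justified because $K^{S_1}_t$ is smooth for $t>0$, the series defining it converges uniformly, and $f\in L^2\subset L^1(S_1)$.

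The properties then follow termwise.

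\textbf{Monotonicity.} For each $k$, the map $t\mapsto (1-e^{-k^2t})/t$ is positive and decreasing on $(0,+\infty)$. This holds because $s\mapsto(1-e^{-s})/s$ is decreasing: its derivative has the sign of $(1+s)e^{-s}-1\le 0$. So $J^{S_1}_t(f)$ is a nonnegative series of decreasing terms, hence decreasing in $t$. This holds for every $f\in L^2$, in particular for $f\in H^1$.

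\textbf{Limit.} As $t\to0^+$, each term increases to $k^2|\hat f_k|^2$. By monotone convergence for series,
\[
\lim_{t\to0^+}J^{S_1}_t(f)=\sup_{t>0}J^{S_1}_t(f)=2\pi\sum_k k^2|\hat f_k|^2\in[0,+\infty].
\]

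To finish, I will use the standard characterization of $H^1(S_1)$, that is, of $H^1([-\pi,\pi])$ with periodic boundary condition as in Remark \ref{circle}: $f\in H^1(S_1)$ if and only if $\sum_k k^2|\hat f_k|^2<\infty$, and in that case $\|f'\|_2^2=2\pi\sum_k k^2|\hat f_k|^2$.

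\textbf{Direct direction.} If $f\in H^1$ with $f(-\pi)=f(\pi)$, integrating by parts gives $\widehat{(f')}_k=ik\hat f_k$; the boundary terms cancel by periodicity. Parseval then gives the identity above.

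\textbf{Converse direction.} If the sum is finite, define $g:=\sum_k ik\hat f_k e^{iky}\in L^2$. Since $\hat g_0=0$, the function $F(y)=\hat f_0+\int_{-\pi}^y g$ is absolutely continuous and periodic. Its Fourier coefficients coincide with those of $f$, so $f=F$ a.e. and $f\in H^1(S_1)$.

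Combining the two directions: $J^{S_1}_t(f)$ is uniformly bounded in $t$ if and only if $\sup_t J^{S_1}_t(f)=\lim_{t\to0^+}J^{S_1}_t(f)<\infty$, if and only if $f\in H^1(S_1)$. In that case the limit equals $\|f'\|^2_{L^2(S_1)}$.

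The only mildly delicate point is the converse characterization of $H^1(S_1)$, specifically recovering the periodic trace condition $f(-\pi)=f(\pi)$. The vanishing of $\hat g_0$ guarantees that the primitive $F$ is periodic, which settles it. Everything else is Parseval together with monotone convergence.
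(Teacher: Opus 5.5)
Your proof is correct and takes the same route as the paper. Plancherel reduces $J_t^{S_1}(f)$ to $\sum_k \frac{1-e^{-k^2t}}{t}|f_k|^2$, monotone convergence gives the limit $\sum_k k^2|f_k|^2$, and the Fourier characterization of $H^1(S_1)$ closes the equivalence; you prove that characterization and the monotonicity in $t$ explicitly, whereas the paper takes them for granted.

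There is one small slip in your converse direction. The function $F(y)=\hat f_0+\int_{-\pi}^{y}g$ has zeroth Fourier coefficient $\hat f_0-\sum_{k\neq0}(-1)^k\hat f_k$ rather than $\hat f_0$. To fix it, replace $\hat f_0$ by the constant that makes $\hat F_0=\hat f_0$; this changes neither the periodicity of $F$ nor its membership in $H^1(S_1)$.
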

\begin{proof}
We retrace the proof of \cite[Theorem 7.10]{Lieb_Loss}. Note that 
\[
f(t,y):=\int_{-\pi}^{\pi}K^{S_1}_t(s-y)f(s)\, ds
\] 
solves the heat equation with initial datum $f(0,y)=f(y)$. Let $f_k$ be the $k$-th Fourier coefficient of $f$:
\[
f_k=(f, e_{k})_{L^2(S_1)}, \quad \text{where} \quad e_k:= \frac{e^{iky}}{\sqrt{2\pi}} \qquad k \in \mathbb{Z},
\]
being $(f,g)_{L^2(S_1)}:=\int_{-\pi}^{\pi}f(y)\overline{g(y)}\,dy$ the scalar product in $L^2(S_1)$. Then, 
\[
\forall k \in \mathbb{Z}, \qquad f(t, \cdot)_k=e^{-k^2t}f_k.
\]
Using Plancherel's identity, $J_t^{S_1}(f)$ can be written as
\begin{equation*}
\begin{split}
    J_t^{S_1}(f)&=\frac{1}{t}\left((f,f)_{L^2(S_1)}-(f(t, \cdot),f)_{L^2(S_1)}\right) =\frac{1}{t}\left(\sum\limits_{k=-\infty}^{+\infty}|f_k|^2-\sum\limits_{k=-\infty}^{+\infty}e^{-k^2t}|f_k|^2\right)\\
    &=\sum\limits_{k=-\infty}^{+\infty}\left(\frac{1-e^{-k^2t}}{t}\right)|f_k|^2 \leq \sum\limits_{k=-\infty}^{+\infty} k^2|f_k|^2.
\end{split}
\end{equation*}
By the Monotone Convergence Theorem,
\begin{equation*}
    \lim\limits_{t \rightarrow 0^+}J_t^{S_1}(f)=\sum\limits_{k=-\infty}^{+\infty}\lim\limits_{t \rightarrow 0^+}\left(\frac{1-e^{-k^2t}}{t}\right)|f_k|^2=\sum\limits_{k=-\infty}^{+\infty}k^2|f_k|^2=\|f'\|_{L^2(S_1)}^2.
\end{equation*}
This implies that $f' \in L^2(S_1)$ if and only if $J_t^{S_1}(f)$ is uniformly bounded in $t$, completing the proof.
\end{proof}

This allows us to prove the counterpart of Lemma \ref{lem 7.10}.
\begin{lemma}
\label{lem 7.10 cerchio}
Let $u \in L^2(\R^N \x S_1)$. Then, 
\[
u(x,\cdot) \in H^1(S_1) \text{ for almost every } x \in \R^N \qquad \text{ and } \qquad \int_{\R^N}\|\partial_y u(x, \cdot)\|_{L^2(S_1)}^2\, dx < +\infty
\]
if and only if 
\[
J_t(u):= \frac{1}{t}\left(\int_{S_1}\int_{\R^N} |u(x,y)|^2\,dx\,dy - \int_{\R^N}\int_{S_1}\int_{S_1}K^{S_1}_t(y_1-y_2)u(x,y_1)u(x,y_2)\, dy_1\, dy_2\,dx\right)
\]
is uniformly bounded in $t \in (0, +\infty)$. If this is the case, $J_t(u)$ is positive and decreasing in $t$, the weak derivative $\partial_y u$ is a function in $L^2$, and
\[
 \lim\limits_{t \rightarrow 0} J_t(u)=\|\partial_y u\|_{L^2\left(\R^N \x S_1\right)}^2
\]
\end{lemma}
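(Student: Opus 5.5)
The plan is to mirror the proof of Lemma \ref{lem 7.10}, with the roles of the two variables exchanged: the fibers are now $u(x,\cdot)$, $x \in \R^N$, and Lemma \ref{lemma heat kernel su S1} takes the place of \cite[Theorem 7.10]{Lieb_Loss}. The first step is a fiberwise decomposition. For every $x$ such that $u(x,\cdot) \in L^2(S_1)$, which by Fubini holds for a.e. $x$, we have
\[
J_t(u)=\int_{\R^N} J_t^{S_1}\bigl(u(x,\cdot)\bigr)\,dx .
\]
This follows from Fubini--Tonelli once we check integrability of the kernel term. Here $K_t^{S_1}\ge 0$ and $\int_{S_1}K_t^{S_1}=1$, so by Young's inequality $\iint K_t^{S_1}(y_1-y_2)|u(x,y_1)||u(x,y_2)|\le \|u(x,\cdot)\|_{L^2(S_1)}^2$, and this quantity is integrable in $x$.

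The second step records monotonicity. The Fourier expression in the proof of Lemma \ref{lemma heat kernel su S1},
\[
J^{S_1}_t(f)=\sum_{k}\frac{1-e^{-k^2t}}{t}|f_k|^2,
\]
shows that $J^{S_1}_t(f)$ is nonnegative and nonincreasing in $t$ for every $f\in L^2(S_1)$, since $s\mapsto(1-e^{-s})/s$ is decreasing. It also shows that $\sup_t J^{S_1}_t(f)=\lim_{t\to0^+}J^{S_1}_t(f)=\sum_k k^2|f_k|^2\in[0,+\infty]$. Integrating in $x$, $J_t(u)$ is nonnegative and nonincreasing in $t$. It is positive unless $u$ is a.e. independent of $y$; the statement's ``positive'' should be read in this sense.

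For the direct implication, for a.e. $x$ we have $J_t^{S_1}(u(x,\cdot))\le \|\partial_y u(x,\cdot)\|_{L^2(S_1)}^2$, so $J_t(u)\le \int_{\R^N}\|\partial_yu(x,\cdot)\|^2\,dx<+\infty$ uniformly in $t$. Monotone convergence as $t\to0^+$, applied to the increasing family of nonnegative functions $x\mapsto J_t^{S_1}(u(x,\cdot))$, gives $\lim_{t\to0^+}J_t(u)=\|\partial_yu\|_{L^2(\R^N\x S_1)}^2$.

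For the converse, monotone convergence gives
\[
\int_{\R^N}\sup_t J_t^{S_1}\bigl(u(x,\cdot)\bigr)\,dx=\sup_t J_t(u)<+\infty .
\]
Hence the supremum is finite for a.e. $x$, and Lemma \ref{lemma heat kernel su S1} yields $u(x,\cdot)\in H^1(S_1)$ with $\|\partial_yu(x,\cdot)\|^2=\lim_{t\to0^+} J_t^{S_1}(u(x,\cdot))$. Integrating in $x$ gives both the integrability condition and the limit formula.

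The main obstacle is the claim that the weak derivative $\partial_yu$ on $\R^N\x S_1$ is an $L^2$ function which coincides with the fiberwise derivatives $\partial_y(u(x,\cdot))$. To handle this, I will define $g(x,y):=\partial_y(u(x,\cdot))(y)$. Measurability of $g$ follows by writing it as the a.e. $L^2$-limit of the difference quotients $(u(x,y+h)-u(x,y))/h$ along a sequence $h_n\to0$, each of which is jointly measurable. The difference quotients converge in $L^2(S_1)$ on each fiber and are dominated in $L^2(\R^N\x S_1)$ by $\|\partial_yu(x,\cdot)\|_{L^2(S_1)}$, so they converge in $L^2(\R^N\x S_1)$. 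Then, for a test function $\varphi\in C_c^\infty(\R^N\x S_1)$, Fubini and one-dimensional integration by parts on each fiber $S_1$, where there are no boundary terms thanks to periodicity, give $\int u\,\partial_y\varphi=-\int g\,\varphi$. Hence $\partial_yu=g\in L^2$, which closes the argument.
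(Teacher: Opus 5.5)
Your proposal is correct and follows the same route as the paper. The paper omits the proof as ``identical to that of Lemma \ref{lem 7.10} (thanks to Lemma \ref{lemma heat kernel su S1})'', which amounts to your fiberwise identity $J_t(u)=\int_{\R^N}J_t^{S_1}(u(x,\cdot))\,dx$, monotone convergence in both directions, and Lemma \ref{lemma heat kernel su S1} in place of \cite[Theorem 7.10]{Lieb_Loss}. Your additional details also fill in points the paper leaves implicit: the integrability check for Fubini, the identification of the distributional $\partial_y u$ with the fiberwise derivatives via difference quotients, and the observation that ``positive'' should read ``nonnegative''.
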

The proof is identical to that of Lemma \ref{lem 7.10} (thanks to Lemma \ref{lemma heat kernel su S1}), and is therefore omitted.

\begin{proposition}
\label{propr x-riarrangiamento}
Let $u \in H^1(\R^N_x\x\cG_y)$ be nonnegative. Then, the $x$-symmetric rearrangement is a function in $H^1(\R^N \x \cG)$ such that:
\begin{itemize}     
\item[($i$)] $\|u^{*x}\|_{L^p(\R^N \x \cG)}^p=\|u\|_{L^p(\R^N \x \cG)}^p$, for every $p \geq 1$;
\item[($ii$)] $\|\nabla_x u^{*x}\|_{L^2(\R^N \x \cG)}^2 \leq \|\nabla_x u\|_{L^2(\R^N \x \cG)}^2$;
\item[($iii$)] $\|\partial_y u^{*x}\|_{L^2(\R^N \x \cG)}^2 \leq \|\partial_y u\|_{L^2(\R^N \x \cG)}^2$.
\end{itemize}
\end{proposition}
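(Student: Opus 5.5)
Points $(i)$ and $(ii)$ are handled fiberwise in $y$. For a.e. $y\in\cG$ we have $u(\cdot,y)\in H^1(\R^N)$, by Definition \ref{seconda def H1}. Schwarz symmetrization preserves $L^p(\R^N)$ norms, so integrating over $\cG$ gives $(i)$. The Pólya--Szeg\H{o} inequality on $\R^N$ (\cite[Lemma 7.17]{Lieb_Loss}) gives $\|\nabla_x (u(\cdot,y))^*\|_{L^2(\R^N)}\le\|\nabla_x u(\cdot,y)\|_{L^2(\R^N)}$ for a.e. $y$, and integrating in $y$ yields $(ii)$. It also gives points $(i)$ and $(ii)$ of Definition \ref{seconda def H1} for $u^{*x}$.

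The substantive part is $(iii)$, together with the requirement that $u^{*x}(x,\cdot)\in H^1(\cG)$ for a.e. $x$, including continuity at the vertices. Here the variables swap roles compared with Proposition \ref{propr y-riarrangiamento decr}: the rearrangement now acts in $x$, and we must control the $y$-derivative. I would work edge by edge and use difference quotients in place of the heat kernel. Fix an edge $e\simeq[0,l_e]$ and $h>0$ small. For a.e. pair $y,y+h\in e$, the nonexpansivity of Schwarz rearrangement in $L^2$ (\cite[Theorem 3.5]{Lieb_Loss}) gives
\[
\|u^{*x}(\cdot,y+h)-u^{*x}(\cdot,y)\|_{L^2(\R^N)}\le \|u(\cdot,y+h)-u(\cdot,y)\|_{L^2(\R^N)}.
\]
Squaring, dividing by $h^2$ and integrating over $y\in[0,l_e-h]$ shows that the difference quotients of $u^{*x}_e$ in $y$ are bounded in $L^2$ by $\|\partial_y u_e\|_{L^2(\R^N\x e)}$. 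The standard characterization of $H^1$ via difference quotients in one direction then gives $\partial_y u^{*x}_e\in L^2(\R^N\x e)$ and $\|\partial_y u^{*x}_e\|_2\le\|\partial_y u_e\|_2$. Summing over edges gives $(iii)$. An alternative route mirrors Lemma \ref{lem 7.10}, with the heat kernel on an interval, or on $S_1$ as in Lemma \ref{lem 7.10 cerchio}, after reflecting the edge. That route would need a Riesz-type inequality in $x$ for the cross term, $\int_{\R^N} f(x)g(x)\,dx\le\int_{\R^N} f^*g^*\,dx$, applied to $f=u(\cdot,y_1)$ and $g=u(\cdot,y_2)$. I prefer the difference quotient argument because it avoids constructing heat kernels on general graphs.

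The step I expect to be the main obstacle is the vertex condition. I would use the $H^1(\R^N\x e)$ regularity just established to obtain, for a.e. $x$, that $u^{*x}_e(x,\cdot)$ is continuous on $e$. Then I would identify the trace of $u^{*x}_e$ on $\R^N\x\{v\}$. For this I use the map $y\mapsto u_e(\cdot,y)\in L^2(\R^N)$, which is continuous on $\bar e$ because $u_e\in H^1(\R^N\x e)$, and whose value at the endpoint $v$ is the trace $T_{\R^N\x\{v\}}u_e$. By the $L^2$-contractivity of $f\mapsto f^*$, the map $y\mapsto u^{*x}_e(\cdot,y)$ is also continuous into $L^2(\R^N)$, and its value at $v$ is $(T_{\R^N\x\{v\}}u_e)^*$. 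So the trace of $u^{*x}_e$ at $v$ is the rearrangement of the trace of $u_e$. Since these traces agree for all edges incident to $v$, by \eqref{trace operator}, their rearrangements agree as well. This gives condition $(iv)$ of Definition \ref{def Verzini}, and by Proposition \ref{prop: eq def} we conclude $u^{*x}\in H^1(\R^N\x\cG)$.
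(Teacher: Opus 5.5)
Your proof is correct. For $(i)$, $(ii)$ and the vertex condition it is the paper's argument: fiberwise Schwarz symmetrization, then $C(\overline{e};L^2(\R^N))$-continuity together with nonexpansivity to get $T_{\R^N\x\{v\}}(u_e^{*x})=(T_{\R^N\x\{v\}}u_e)^*$.

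The genuine difference is in $(iii)$.
\begin{itemize}
\item[--] \emph{The paper's route.} It splits half-lines into intervals, rescales each edge to $[0,\pi]$, and reflects evenly to $\R^N\x S_1$. It then applies the circle heat-kernel characterization of Lemmas \ref{lemma heat kernel su S1} and \ref{lem 7.10 cerchio}, deriving $J_t(\tilde{u}_e^{*x})\le J_t(\tilde{u}_e)$ from the Hardy--Littlewood inequality in $x$ and the positivity of $K^{S_1}_t$.
\item[--] \emph{Your route.} You bound the $y$-difference quotients of $u_e^{*x}$ directly by \cite[Theorem 3.5]{Lieb_Loss}, and then invoke the one-directional difference-quotient criterion for $H^1$.
\end{itemize}

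The two mechanisms are the same inequality in different clothing. Since $\int_{S_1}K^{S_1}_t=1$,
\[
J_t(u)=\frac{1}{2t}\int_{S_1}\int_{S_1}K^{S_1}_t(y_1-y_2)\,\|u(\cdot,y_1)-u(\cdot,y_2)\|_{L^2(\R^N)}^2\,dy_1\,dy_2 .
\]
This is a kernel-weighted average of exactly the squared differences you control. Moreover, for the square, nonexpansivity is just Hardy--Littlewood plus equimeasurability.

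Your route is more economical. It needs no circle lemma, no reflection and no rescaling, and it handles half-line edges directly. A single tool serves both $(iii)$ and the trace identification. With a Lebesgue-point refinement it even gives the pointwise bound $\|\partial_y u_e^{*x}(\cdot,y)\|_{L^2(\R^N)}\le\|\partial_y u_e(\cdot,y)\|_{L^2(\R^N)}$ for a.e. $y$. The paper's route buys uniformity with the heat-semigroup proofs used for the $y$-rearrangements.

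One small correction to your motivation: the paper also works edge by edge. So neither approach needs a heat kernel on a general graph; what you actually save is the circle construction.
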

\begin{proof}
The proof of the first two points is analogous to that of the first two points of Proposition \ref{propr y-riarrangiamento decr}, and is omitted: it is based on the fact that the $p$-norms are preserved when passing to the $x$-rearrangement (see \cite[Section 3.3]{Lieb_Loss}), while the Dirichlet integral decreases (see \cite[Lemma 7.17]{Lieb_Loss}). We give the details of the proof of the third point. Notice that
	\begin{equation}
		\label{decomposizione norma H1}
		\|\partial_y u\|^2_{L^2(\R^N\x\cG)}=\int_{\R^N}\sum\limits_{e \in \mathfrak{E}}\|\partial_yu_e(x, \cdot)\|_{L^2(e)}^2\,dx= \sum\limits_{e \in \mathfrak{E}}\int_{\R^N}\|\partial_yu_e(x, \cdot)\|_{L^2(e)}^2\,dx,
	\end{equation}
	so that it suffices to prove that, for every $e \in \mathfrak{E}$,
	\begin{equation}
		\label{prima cosa da dimostrare in x-riarr}
		u_e^{*x}(x, \cdot) \in H^1(e) \quad \text{for almost every } x \in \R^N,
	\end{equation}
	and
	\begin{equation}
		\label{seconda cosa da dimostrare in x-riarr}
		\int_{\R^N}\|\partial_yu_e(x, \cdot)\|_{L^2(e)}^2\,dx\geq \int_{\R^N}\|\partial_yu_e^{*x}(x, \cdot)\|_{L^2(e)}^2\,dx.
	\end{equation}
	Fix $e \in \mathfrak{E}$. If $e$ is a half-line we decompose it into countably many intervals of finite length and argue on each of them separately, so that we may assume $l(e)<+\infty$ and, after a trivial rescaling, $l(e)=\pi$, identifying $e$ with $[0,\pi]$. Let $\tilde{u}_e \in H^1(\R^N\x S_1)$ be the symmetric extension of $u_e$, that is $\tilde u_e(x,y):=u_e(x,|y|)$ for $y \in [-\pi,\pi]$ (see Remark \ref{circle}); since $u_e \in H^1(\R^N \x e)$ and $\tilde u_e$ is even in $y$, Lemma \ref{lem 7.10 cerchio} applies to $\tilde u_e$ and gives
	\[
	\|\partial_yu_e\|_{L^2(\R^N \x e)}^2=\frac{1}{2}\|\partial_y\tilde{u}_e\|_{L^2(\R^N \x S_1)}^2=\lim\limits_{t \rightarrow 0}\frac{1}{2}J_t(\tilde{u}_e).
	\]
	Now, arguing exactly as in \eqref{disug polya szego}, with $J_t$, $K^{S_1}_t$ and the roles of the two variables interchanged --- that is, using Fubini's Theorem to integrate first in $x$, the invariance of the $2$-norm and the Hardy--Littlewood inequality \cite[Theorem 3.4]{Lieb_Loss} in the variable $x$, and the positivity of $K^{S_1}_t$ --- we obtain
	\[
	\|\partial_yu_e\|_{L^2(\R^N \x e)}^2 \geq \lim\limits_{t \rightarrow 0}\frac{1}{2}J_t(\tilde{u}_e^{*x}).
	\]
	In particular $J_t(\tilde{u}_e^{*x})$ is uniformly bounded in $t$, so that, by Lemma \ref{lem 7.10 cerchio}, $\tilde{u}^{*x}_e(x, \cdot) \in H^1(S_1)$ for almost every $x \in \R^N$, which is \eqref{prima cosa da dimostrare in x-riarr} because $\tilde{u}^{*x}_e$ is even in $y$, and $\partial_y \tilde{u}_e^{*x} \in L^2(\R^N \x S_1)$ with
	\[
	\|\partial_y u_e^{*x}\|^2_{L^2(\R^N \x e)}=\frac{1}{2}\|\partial_y \tilde{u}^{*x}_e \|_{L^2(\R^N \x S_1)}^2 =\lim\limits_{t \rightarrow 0}\frac{1}{2}J_t(\tilde{u}_e^{*x})\leq \|\partial_yu_e\|_{L^2(\R^N\x e)}^2,
	\]
	that is \eqref{seconda cosa da dimostrare in x-riarr}. Together with \eqref{decomposizione norma H1}, this proves the third point.

	Finally, $u^{*x} \in H^1(\R^N \x \cG)$ by Definition \ref{def Verzini}. Point $(i)$ follows from $\|u^{*x}\|_2^2=\|u\|_2^2$. As for points $(ii)$ and $(iii)$, by \eqref{prima cosa da dimostrare in x-riarr}, \eqref{seconda cosa da dimostrare in x-riarr} and by the same conclusions for the weak gradient $\nabla_x u_e^{*x}$,
	\[
	\forall e \in \mathfrak{E}, \ u_e^{*x} \in H^1(\R^N \x e) \qquad \text{ and } \qquad \sum\limits_{e\in \mathfrak{E}}\|u_e^{*x}\|_{H^1(\R^N \x e)}^2 \leq\sum\limits_{e\in \mathfrak{E}}\|u_e\|_{H^1(\R^N \x e)}^2<+\infty.
	\]
	Concerning point $(iv)$, let $v \in \mathcal{V}$ and let $e$ be an edge containing $v$. Since $u_e \in H^1(\R^N \x e)$, the map $y \mapsto u_e(\cdot\,,y)$ belongs to $C\big(\overline{e}; L^2(\R^N)\big)$, so that $u_e(\cdot\,,y) \to T_{\R^N \x \{v\}}(u_e)$ in $L^2(\R^N)$ as $y \to v$; by the previous point the same holds for $u_e^{*x}$. Hence, by the nonexpansivity of the Schwarz rearrangement \cite[Theorem 3.5]{Lieb_Loss}, for any $\{y_n\} \subset e$ with $y_n \to v$,
	\[
	\begin{split}
	\big\|u_e^{*x}(\cdot\,,y_n)-\big(T_{\R^N \x \{v\}}(u_e)\big)^{*}\big\|_{L^2(\R^N)} &=\big\|u_e(\cdot\,,y_n)^{*}-\big(T_{\R^N \x \{v\}}(u_e)\big)^{*}\big\|_{L^2(\R^N)}\\
	& \leq \big\|u_e(\cdot\,,y_n)-T_{\R^N \x \{v\}}(u_e)\big\|_{L^2(\R^N)} \rightarrow 0,
	\end{split}
	\]
	whence $T_{\R^N \x \{v\}}\big(u_e^{*x}\big)=\big(T_{\R^N \x \{v\}}(u_e)\big)^{*}$. As $T_{\R^N \x \{v\}}(u_{e_1})=T_{\R^N \x \{v\}}(u_{e_2})$ whenever $e_1,e_2$ are incident at $v$, this gives point $(iv)$.
\end{proof}
\begin{remark}
	\label{rem ground states sono radially decreasing}
Thanks to the Polya-Szego type inequalities of the $x$-symmetric rearrangements, we could always assume that the minimizing sequences for the nonlinear Schr\"odinger equation on $\R^N \x \cG$ are radially decreasing in the $x$-variable.
\end{remark}
\section{Background on the NLSE on \texorpdfstring{$\R^N\x\cG$}{RN x G}, where 
\texorpdfstring{$\cG$}{G} is a compact metric graph}\label{sez background}
This section is devoted to introducing the fundamental properties of the nonlinear Schr\"odinger energy and the Gagliardo-Nirenberg inequality.

\subsection{Basic facts on the NLS energy on \texorpdfstring{$\R^N\x\cG$}{RN x G}}
For $u \in H^1(\R^N\x\cG)$, let
\begin{equation}
\label{schroedinger energy}
\begin{split}
    E_p\left(u, \R^N \x \cG\right):&=\frac{1}{2}\|\nabla_x u\|_{L^2(\R^N \x \cG)}^2+\frac{1}{2}\|\partial_yu\|_{L^2(\R^N \x \cG)}^2-\frac{1}{p}\|u\|_{L^p(\R^N \x \cG)}^p\\
    &=\frac{1}{2}\|\nabla_{x,y} u\|_{L^2(\R^N \x \cG)}^2-\frac{1}{p}\|u\|_{L^p(\R^N \x \cG)}^p
\end{split}
\end{equation}
be the nonlinear Schr\"odinger energy, with power-type nonlinearity. We search for local or global minimizers of $E_p(\cdot, \R^N \x \cG)$ in 
\begin{equation*}
    H^1_{\mu}\left(\R^N \x \cG\right):=\{u \in  H^1(\R^N \x \cG) \text{ such that } \|u\|_{L^2(\R^N \x \cG)}^2=\mu\};
\end{equation*}
in particular, we investigate the existence of a function $u$ that realizes
\begin{equation*}
    \mathcal{E}_{p}\left(\mu, \R^N \x \cG\right):=\inf\limits_{u \in H^1_\mu(\R^N \x \cG)}E_p(u, \R^N \x \cG).
\end{equation*}
If $u$ is a ground state of mass $\mu$ for $E_p(\cdot, \R^N \x \cG)$, then we can assume that $u\geq 0$ (since $E_p(u, \R^N \x \cG)=E_p(|u|, \R^N \x \cG)$), and $u$ is radially decreasing in the $x$-variable, by Remark \ref{rem ground states sono radially decreasing}. Moreover, $u \in H^1(\R^N \times e)$ is a solution of the stationary nonlinear Schr\"odinger equation:
\begin{equation}
	\label{schroedinger equation}
	\begin{cases}
		-\Delta_x u_e-\partial_y^2u_e+\omega u_e=|u_e|^{p-2}u_e \qquad &\text{in $\R^N \times e$, $\forall e \in \mathfrak{E}$} \\
		\sum\limits_{e \text{ incident in }v}\partial_y u_e(x,v)=0 \qquad &\forall v \in \mathcal{V}, x \in \mathbb{R}^N, 
		\end{cases}
	\end{equation}
	for some Lagrange multiplier $\omega=\omega(\mu) \in \mathbb{R}$, where $\partial_y u_e(x,v)$ is the outer derivative in the $y$ direction with respect to the vertex $v$. Namely, $u$ is a classical solution of the nonlinear Schr\"odinger equation on each $\R^N \times e$, and additional continuity and Kirchhoff conditions at the vertices of $\cG$ hold. This is obtained through standard arguments, analogous to those employed in \cite[Proposition 3.3]{2015} for metric graphs. 
\begin{remark}
\label{soluzione triviale}
A particularly relevant class of solutions is given by those that are independent on the variable \(y\), which we shall henceforth call \emph{semi-trivial solutions}.
With this ansatz, a solution $u(x,y)=Z(x) \in H^1(\R^N)$ satisfies
\begin{equation}
	\label{schroedinger equation R^N}
-\Delta_x Z+\omega Z=|Z|^{p-2}Z \qquad \text{ on }\R^N,
\end{equation}
and is such that 
\[
\|Z\|_{L^2(\R^N \x \cG)}^2=\mu \quad \implies \quad  \|Z\|_{L^2(\R^N)}^2=\frac{\mu}{l(\cG)}.
\]
It is well known (see \cite{Kwong}) that, for every $p \in (2, 2^*) \setminus \{2+4/N\}$ (in particular, for every $p \in (2,2+4/N)$) and for every $\mu>0$, equation \eqref{schroedinger equation R^N} together with the mass constraint $\|Z\|_{L^2(\R^N)}^2=\mu/l(\cG)$ admits a unique positive solution for some $\omega>0$, solution which is radially symmetric and decreasing. We denote such a solution by $Z_{\mu/l(\cG),p}$. For $p \in (2,2+4/N)$, the function $Z_{\mu/l(\cG),p}$ is also a mass-constrained global minimizer of 
\[
E_p\left(u, \R^N\right):=\frac{1}{2}\|\nabla_x u\|_{L^2(\R^N)}^2-\frac{1}{p}\|u\|_{L^p(\R^N)}^p 
\]
on the $L^2$-sphere
\[ 
H^1_{\frac{\mu}{l(\cG)}}\left(\R^N\right):=\left\{u \in H^1(\R^N) \text{ such that } \|u\|_{L^2(\R^N)}^2=\frac{\mu}{l(\cG)}\right\}.
\]
For each $\nu>0$, let 
\[
\cE_p(\nu, \R^N):=\inf\limits_{u \in H^1_{\nu}(\R^N)}E_p(u, \R^N)<0.
\]
Then, by standard Pohozaev identities and scaling arguments,
\begin{equation}
\label{identita di Pohozaev}
\left\|\nabla_x Z_{\nu,p}\right\|_{L^2(\R^N)}^2=\frac{N(p-2)}{2p}\left\|Z_{\nu,p}\right\|_{L^p(\R^N)}^p \implies \left\|\nabla_x Z_{\nu,p}\right\|_{L^2(\R^N)}^2=-\frac{2N(p-2)}{4-N(p-2)}\cE_p(\nu, \R^N).
\end{equation}
Moreover, by a straightforward computation,
	\begin{equation}\label{da Znup a Z1p}
	Z_{\nu, p}(x) = \nu^{\frac{2}{4-N(p-2)}}Z_{1,p}\left(\nu^{\frac{p-2}{4-N(p-2)}}x\right).
	\end{equation}
In particular,
\begin{equation}
	\label{scalamento norme}
\|\nabla_xZ_{\nu, p}\|_{L^2(\R^N)}^2=\nu^{1+\frac{2(p-2)}{4-N(p-2)}}\|\nabla_xZ_{1, p}\|_{L^2(\R^N)}^2, \qquad \|Z_{\nu, p}\|_{L^p(\R^N)}^p=\nu^{1+\frac{2(p-2)}{4-N(p-2)}}\|Z_{1, p}\|_{L^p(\R^N)}^p,
\end{equation}
and
\begin{equation}
	\label{scalamento energia}
	\cE_p(\nu, \R^N)=\nu^{1+\frac{2(p-2)}{4-N(p-2)}}\cE_p(1, \R^N).
\end{equation}
These identities will be useful in the following. We will refer to $Z_{\mu/l(\cG),p}$ both as an element of $H^1_{\mu/l(\cG)}(\R^N)$ and of $H^1_{\mu}(\R^N \x \cG)$, without changing the notation.
\end{remark}

Before discussing the Gagliardo-Nirenberg inequality, we introduce the following useful scaling.
\begin{remark}
	\label{remark scaling}
Let $\cG$ be a metric graph. For every $L>0$, we denote by $L\cG$ the dilation of the metric graph $\cG$ by a factor $L$; with this, we mean that each edge is scaled by a factor $L$ in the following way: if $e$ is identified with the closed interval $[0,\ell_e]$, where $\ell_e$ is the length of $e$, then the scaled edge $Le$ is identified with $[0,L\ell_e]$. It is clear that this operation respects the gluing of the new vertices. At this point we can also consider the following scaling of functions in $H^1(\R^N \x \cG)$: for $u\simeq \{u_e\}_{e \mathfrak{E}} \in H^1(\R^N \x \cG)$ and $p >2$, we pose
		\begin{equation}\label{scaling}
		u_L:=\{u_{Le}\}_{e \in \mathfrak{E}}, \quad \text{where} \quad u_{Le}(x,y) :=L^{-\frac{2}{p-2}}u_e\left(\frac{x}{L}, \frac{y}{L}\right).
	\end{equation}
	Note that this definition depends on the particular choice of $p$, but we do not stress this dependence to ease the notation. Direct computations lead to
	\begin{itemize}
		\item[($i$)] $\|u_L\|_{L^2(\R^N \x L\cG)}^2=L^{-\frac{4-(N+1)(p-2)}{p-2}}\|u\|_{L^2(\R^N \x \cG)}^2$
		\item[($ii$)] $\|u_L\|_{L^p(\R^N \x L\cG)}^p=L^{-\frac{4-(N-1)(p-2)}{p-2}}\|u\|_{L^p(\R^N \x \cG)}^p$;
		\item[($iii$)] $\|\nabla_{x,y}u_L\|_{L^2(\R^N \x L\cG)}^2=L^{-\frac{4-(N-1)(p-2)}{p-2}} \|\nabla_{x,y} u\|_{L^2(\R^N \x \cG)}^2$.
	\end{itemize}
	Thus, we also have
	\[
	E_p(u_L, \R^N \x L\cG)=L^{- \frac{4-(N-1)(p-2)}{p-2}}E_p(u, \R^N \x \cG).
	\]
\end{remark}

\subsection{Gagliardo-Nirenberg inequality}

The fundamental tool for the investigation of energy minimizers on $H^1_{\mu}\left(\R^N \x \cG\right)$ is the Gagliardo-Nirenberg inequality. We prove it in a localized form, which is the one we shall actually use: it controls the $L^p$ norm of $u$ by its global $H^1$ norm and by the $L^2$ norm of $u$ on a \emph{single} slab $Q \x \cG$, $Q$ a unit cube of $\R^N$. The global inequality follows at once, see Corollary \ref{proposition gn} below.

Throughout, we fix once and for all a decomposition of $\R^N$ into open unit cubes $\{Q_n\}_{n \in \mathbb{N}}$ centred at the points of $\mathbb{Z}^N$, so that
\begin{equation}
	\label{decomposition R^N}
	\R^N \x \cG = \bigcup\limits_{n \in \mathbb{N}}\overline{Q_n}\x\cG, \qquad \left|Q_n \cap Q_m\right|=0 \quad \text{for } n \neq m.
\end{equation}
Accordingly, the family $\left\{Q_n \x e\right\}_{(n,e) \in \mathbb{N} \x \mathfrak{E}}$ is a countable family of pairwise disjoint boxes of $\R^{N+1}$ covering $\R^N \x \cG$ up to a set of measure zero, whence, for every $u \in H^1(\R^N \x \cG)$,
\begin{equation}
	\label{additivita}
	\sum\limits_{(n,e)}\|u\|_{L^2(Q_n \x e)}^2 = \|u\|_{L^2(\R^N \x \cG)}^2, \qquad \sum\limits_{(n,e)}\|u\|_{H^1(Q_n \x e)}^2=\|u\|_{H^1(\R^N \x \cG)}^2.
\end{equation}

We shall use the following elementary summation inequality, whose point is that it holds with constant $1$ for a \emph{countable} family of indices, and with no restriction other than $\alpha+\beta \geq 1$.

\begin{lemma}
	\label{lemma somma discreta}
	Let $\alpha, \beta \geq 0$ with $\alpha+\beta \geq 1$. Then, for every pair of sequences $\{a_j\}_{j \in J}, \{b_j\}_{j \in J}\in \ell^1$ of nonnegative numbers, with $J$ countable, $\{a_j^{\alpha}b_j^{\beta}\}_{j \in J} \in \ell^1$, and
	\begin{equation}
		\label{somma discreta}
		\sum\limits_{j \in J} a_j^{\alpha}b_j^{\beta} \leq \Big(\sum\limits_{j \in J} a_j\Big)^{\alpha}\Big(\sum\limits_{j \in J} b_j\Big)^{\beta}.
	\end{equation}
\end{lemma}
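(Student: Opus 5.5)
The plan is to reduce to the case where one of the two sums equals $1$, using homogeneity, and then to compare termwise. Let $A:=\sum_j a_j$ and $B:=\sum_j b_j$. If $A=0$ then every $a_j=0$; when $\alpha>0$ the left-hand side vanishes, and when $\alpha=0$ the inequality reads $\sum_j b_j^{\beta}\le B^{\beta}$ with $\beta\ge1$, which is the special case treated below. The case $B=0$ is symmetric. So we may assume $A,B>0$. Both sides of \eqref{somma discreta} are homogeneous of degree $\alpha$ in $\{a_j\}$ and of degree $\beta$ in $\{b_j\}$. After replacing $a_j$ by $a_j/A$ and $b_j$ by $b_j/B$, it therefore suffices to prove
\[
\sum_{j\in J} a_j^{\alpha}b_j^{\beta}\le 1 \qquad \text{whenever } \sum_j a_j=\sum_j b_j=1 .
\]

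Under this normalization, every term satisfies $0\le a_j,b_j\le 1$. The key step is the pointwise estimate $a_j^{\alpha}b_j^{\beta}\le \theta a_j+(1-\theta)b_j$ for a suitable $\theta\in[0,1]$; we take $\theta:=\alpha/(\alpha+\beta)$, noting that $\alpha+\beta\ge1>0$.

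To prove this estimate, first use $a_j,b_j\le1$ and $\alpha+\beta\ge1$ to lower the exponents:
\[
a_j^{\alpha}b_j^{\beta}=\bigl(a_j^{\theta}b_j^{1-\theta}\bigr)^{\alpha+\beta}\le a_j^{\theta}b_j^{1-\theta}.
\]
This holds because the base $a_j^{\theta}b_j^{1-\theta}$ lies in $[0,1]$, and raising such a number to a power at least $1$ does not increase it. Next, the weighted AM--GM (Young) inequality gives $a_j^{\theta}b_j^{1-\theta}\le\theta a_j+(1-\theta)b_j$. The degenerate cases $\theta\in\{0,1\}$ are trivial, with the convention $0^0=1$ matching the case where the corresponding exponent vanishes.

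Summing over $j$, the series on the right converges to $\theta+(1-\theta)=1$. By comparison this gives both the summability claim $\{a_j^\alpha b_j^\beta\}\in\ell^1$ and the bound $\le1$. Undoing the normalization yields \eqref{somma discreta}.

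There is no real obstacle. The only point needing care is the reduction of the exponent $\alpha+\beta$ to $1$: this is exactly where $\alpha+\beta\ge1$ and the normalization (which makes each term at most $1$) are used. It also explains why the constant is $1$ independently of the cardinality of $J$.
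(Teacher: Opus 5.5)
Your proof is correct, and it takes a more elementary route than the paper's. The paper treats $\alpha,\beta>0$ as follows. It sets $\sigma:=\alpha+\beta$ and applies the generalized H\"older inequality
\[
\|(a_j^\alpha b_j^\beta)\|_{\ell^{1/\sigma}}\le\|(a_j^\alpha)\|_{\ell^{1/\alpha}}\|(b_j^\beta)\|_{\ell^{1/\beta}}=\Big(\sum_j a_j\Big)^{\alpha}\Big(\sum_j b_j\Big)^{\beta}.
\]
It then uses the nesting $\|\cdot\|_{\ell^1}\le\|\cdot\|_{\ell^{1/\sigma}}$, which holds because $1/\sigma\le 1$. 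The cases $\alpha=0$ or $\beta=0$ are handled separately, by the nesting alone.

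You instead normalize both sums to $1$ and prove the termwise bound
\[
a_j^\alpha b_j^\beta=(a_j^\theta b_j^{1-\theta})^{\alpha+\beta}\le a_j^\theta b_j^{1-\theta}\le\theta a_j+(1-\theta)b_j, \qquad \theta=\alpha/(\alpha+\beta).
\]
This is what you get by unpacking the standard proofs of the two facts the paper quotes and fusing them into one pointwise estimate: H\"older comes from normalization plus Young, and $\ell^q$-nesting comes from $x^\sigma\le x$ on $[0,1]$.

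Your version is fully self-contained. It avoids H\"older in the quasi-normed range $1/\sigma<1$, it absorbs the degenerate exponents into $\theta\in\{0,1\}$ instead of treating them as separate cases, and it gets summability by direct comparison with a convergent series. The paper's version is shorter, at the cost of citing those two facts.

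One small remark on an edge case: when $\sum_j a_j=0$ and $\alpha=0$, you cannot literally divide by $\sum_j a_j$. Your reduction is still fine, because then neither side depends on the $a_j$. Alternatively, that case is just $b_j/\sum_k b_k\le 1$ combined with $\beta\ge 1$.
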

\begin{proof}
	Assume first $\alpha, \beta>0$, and set $\sigma:=\alpha+\beta \geq 1$. Since $\frac{1}{1/\alpha}+\frac{1}{1/\beta}=\alpha+\beta=\sigma$, the generalized H\"older inequality $\|fg\|_{\ell^{1/\sigma}} \leq \|f\|_{\ell^{1/\alpha}}\|g\|_{\ell^{1/\beta}}$ gives
	\[
	\big\|(a_j^{\alpha}b_j^{\beta})_j\big\|_{\ell^{1/\sigma}} \leq \big\|(a_j^{\alpha})_j\big\|_{\ell^{1/\alpha}}\big\|(b_j^{\beta})_j\big\|_{\ell^{1/\beta}}=\Big(\sum_j a_j\Big)^{\alpha}\Big(\sum\limits_{j} b_j\Big)^{\beta}.
	\]
	On the other hand $1/\sigma \leq 1$, and the map $q \mapsto \|\cdot\|_{\ell^q(J)}$ is nonincreasing. Therefore
	\[
	\sum\limits_{j \in J} a_j^{\alpha}b_j^{\beta}=\big\|(a_j^{\alpha}b_j^{\beta})_j\big\|_{\ell^{1}} \leq \big\|(a_j^{\alpha}b_j^{\beta})_j\big\|_{\ell^{1/\sigma}},
	\]
	and \eqref{somma discreta} follows. If $\beta=0$ then $\alpha \geq 1$, and \eqref{somma discreta} reduces to $\sum_j a_j^{\alpha} \leq \left(\sum_j a_j\right)^{\alpha}$, which is again the monotonicity of $q \mapsto \|\cdot\|_{\ell^q}$; the case $\alpha=0$ is symmetric.
\end{proof}

\begin{proposition}
	\label{prop GN localizzata}
	Let $\cG$ be a compact metric graph, $N \geq 1$, and let $p \in \left(2,2(N+1)/(N-1)\right)$ ($p \in (2, +\infty)$ if $N=1$). Set
	\[
	\theta(p):=\frac{(N+1)(p-2)}{2}, \qquad \delta(p):=\max\left\{0, \, 1-\frac{\theta(p)}{2}\right\},
	\]
	and, for $u \in H^1(\R^N \x \cG)$,
	\[
	S(u):=\sup\limits_{n \in \mathbb{N}}\|u\|_{L^2(Q_n \x \cG)}.
	\]
	Then there exists $C=C(N,p,\cG)>0$ such that
	\begin{equation}
		\label{GN localizzata}
		\|u\|_{L^p(\R^N \x \cG)}^p \leq C \|u\|^{\theta(p)}_{H^1(\R^N \x \cG)}\|u\|^{2\delta(p)}_{L^2(\R^N \x \cG)}S(u)^{p-\theta(p)-2\delta(p)} \qquad \forall u \in H^1(\R^N \x \cG).
	\end{equation}
	In particular, \eqref{GN localizzata} reads
	\[
	\begin{split}
	&\|u\|_{p}^p \leq C \|u\|^{\theta(p)}_{H^1}S(u)^{p-\theta(p)}\hphantom{\|u\|^{p-2}} \quad \text{if } p \ge 2+\frac{4}{N+1}, \\
	&\|u\|_{p}^p \leq C \|u\|^{\theta(p)}_{H^1}\|u\|_2^{2-\theta(p)}S(u)^{p-2} \quad \text{if } p < 2+\frac{4}{N+1}.
	\end{split}
	\]
\end{proposition}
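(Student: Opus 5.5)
The plan is to prove a local inequality on each box $Q_n \x e$ (a bounded Lipschitz domain of $\R^{N+1}$) and then sum over $(n,e)$ using Lemma \ref{lemma somma discreta} together with the additivity \eqref{additivita}.

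\textbf{Step 1 (local inequality).} On a bounded Lipschitz domain $\Omega\subset\R^{N+1}$ the Gagliardo--Nirenberg inequality reads
\[
\|u\|_{L^p(\Omega)}^p \le C_\Omega \|u\|_{H^1(\Omega)}^{\theta(p)}\|u\|_{L^2(\Omega)}^{p-\theta(p)},
\]
with $\theta(p)=(N+1)(p-2)/2$. It follows from the Sobolev embedding $H^1(\Omega)\hookrightarrow L^{2^*}(\Omega)$ (any $L^q$ if $N=1$) combined with H\"older interpolation between $L^2$ and $L^{2^*}$, or alternatively from extension to $\R^{N+1}$ and the Euclidean inequality. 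Since all $Q_n\x e$ are translates of $Q\x[0,\ell_e]$ and there are finitely many edges, we get a uniform constant $C=C(N,p,\cG)$. Applying this to $u_e$ on each box gives
\[
\|u\|_{L^p(Q_n\x e)}^p \le C\, a_{n,e}^{\theta/2}\, b_{n,e}^{(p-\theta)/2},
\]
where $a_{n,e}=\|u\|_{H^1(Q_n\x e)}^2$ and $b_{n,e}=\|u\|_{L^2(Q_n\x e)}^2$.

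\textbf{Step 2 (extracting $S(u)$).} Since $b_{n,e}\le \|u\|_{L^2(Q_n\x\cG)}^2\le S(u)^2$, I split
\[
b_{n,e}^{(p-\theta)/2}=b_{n,e}^{\delta}\, b_{n,e}^{(p-\theta-2\delta)/2}\le b_{n,e}^{\delta}\, S(u)^{p-\theta-2\delta}.
\]
This requires the exponent $p-\theta-2\delta\ge0$, which I check in the two regimes.
\begin{itemize}
\item[($a$)] If $\theta\ge2$, i.e.\ $p\ge 2_*$, then $\delta=0$, and $p-\theta\ge0$ holds because $p<2(N+1)/(N-1)$ is equivalent to $\theta<p$.
\item[($b$)] If $\theta<2$, then $2\delta=2-\theta$, and the exponent equals $p-2>0$.
\end{itemize}
In both cases $\theta/2+\delta\ge1$: in ($a$) this is $\theta/2\ge1$, and in ($b$) it is $\theta/2+1-\theta/2=1$.

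\textbf{Step 3 (summation).} Apply Lemma \ref{lemma somma discreta} with $\alpha=\theta/2$ and $\beta=\delta$ on the countable index set $\mathbb N\x\mathfrak E$. Using \eqref{additivita}, $\sum a_{n,e}=\|u\|_{H^1}^2$ and $\sum b_{n,e}=\|u\|_2^2$, so
\[
\|u\|_p^p\le C\, S(u)^{p-\theta-2\delta}\,\|u\|_{H^1}^{\theta}\,\|u\|_2^{2\delta},
\]
which is exactly \eqref{GN localizzata}.

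The two displayed special cases then follow by substituting $\delta=0$ and $2\delta=2-\theta$, respectively.

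I expect the only delicate point to be Step 1: the constant must be uniform over the boxes and valid for the full $H^1$ norm rather than only the gradient. This is handled by translation invariance in $x$ and the finiteness of the edge set, since each box depends only on $\ell_e$. In the case $N=1$, any $p<\infty$ is allowed via the two-dimensional embedding into all $L^q$.
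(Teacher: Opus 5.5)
Your proposal is correct and follows the paper's proof essentially step by step. Both arguments use a local Gagliardo--Nirenberg inequality on each box $Q_n\times e$, with a constant uniform in $n$ (by translation invariance) and over the finitely many edges. Both then split $b_{n,e}^{(p-\theta)/2}\le b_{n,e}^{\delta}S(u)^{p-\theta-2\delta}$ with the same two-case check of the exponents, and sum via Lemma~\ref{lemma somma discreta} with $\alpha=\theta/2$, $\beta=\delta$.

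One minor caveat concerns $N=1$. There, $H^1\hookrightarrow L^q$ plus H\"older interpolation at any finite $q$ only gives the power $(p-2)/(1-2/q)>\theta$ of the $H^1$ norm, not the exact exponent $\theta=p-2$. So for $N=1$ you should rely on your extension-plus-Euclidean-inequality route, or on Nirenberg's inequality on bounded domains, as the paper does.
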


\begin{proof}
	To shorten the notation we write $\theta:=\theta(p)$, $\delta:=\delta(p)$, and
	\[
	A_{n,e}:=\|u\|_{H^1(Q_n \x e)}^2, \qquad B_{n,e}:=\|u\|_{L^2(Q_n \x e)}^2, \qquad (n,e) \in \mathbb{N}\x\mathfrak{E}.
	\]
Each $Q_n \x e$ is a bounded box of $\R^{N+1}$, so that, by \cite[pp. 125-126]{MR109940} (see also, e.g., \cite[Sections 12.5, 13.3]{MR3726909}),
	\begin{equation}
		\label{GN locale}
		\|u\|_{L^p(Q_n\x e)}^p \leq C_0 \,A_{n,e}^{\frac{\theta}{2}}B_{n,e}^{\frac{p-\theta}{2}} \qquad \forall n \in \mathbb{N}, \ e \in \mathfrak{E},
	\end{equation}
	for some $C_0>0$ independent on $n$ and $e$: independence on $n$ follows from the invariance of \eqref{GN locale} under translations of $\R^N$, while the constant depends on $e$ only through its length $\ell_e$, and $\mathfrak{E}$ is a finite set, $\cG$ being compact. 
	
	Since $Q_n \x e \subseteq Q_n \x \cG$, we have $B_{n,e} \leq \|u\|_{L^2(Q_n \x \cG)}^2 \leq S(u)^2$ for every $(n,e)$. Moreover $\frac{p-\theta}{2}-\delta > 0$: if $\theta \geq 2$ this amounts to $p>\theta(p)$, which holds precisely because $p<2(N+1)/(N-1)$ (every $p>2$ if $N=1$), whereas if $\theta<2$ it amounts to $\frac{p-\theta}{2}-1+\frac{\theta}{2}=\frac{p-2}{2}> 0$. Consequently
	\begin{equation}
		\label{spezzamento}
		B_{n,e}^{\frac{p-\theta}{2}}=B_{n,e}^{\delta}\,B_{n,e}^{\frac{p-\theta}{2}-\delta} \leq B_{n,e}^{\delta}\,S(u)^{p-\theta-2\delta}.
	\end{equation}

Now, by definition $\frac{\theta}{2}+\delta=\max\left\{\frac{\theta}{2},1\right\}\geq 1$, so that Lemma \ref{lemma somma discreta} applies with $\alpha=\frac{\theta}{2}$, $\beta=\delta$ and $J=\mathbb{N}\x\mathfrak{E}$. Summing \eqref{GN locale} over $(n,e)$ and using \eqref{spezzamento}, \eqref{somma discreta} and \eqref{additivita}, we obtain
	\[
	\begin{split}
		\|u\|_{L^p(\R^N \x \cG)}^p &= \sum\limits_{(n,e)}\|u\|_{L^p(Q_n\x e)}^p \leq C_0\, S(u)^{p-\theta-2\delta}\sum\limits_{(n,e)}A_{n,e}^{\frac{\theta}{2}}B_{n,e}^{\delta} \\
		&\leq C_0\, S(u)^{p-\theta-2\delta}\Big(\sum\limits_{(n,e)}A_{n,e}\Big)^{\frac{\theta}{2}}\Big(\sum\limits_{(n,e)}B_{n,e}\Big)^{\delta} =C_0\,\|u\|^{\theta}_{H^1(\R^N \x \cG)}\|u\|^{2\delta}_{L^2(\R^N \x \cG)}S(u)^{p-\theta-2\delta},
	\end{split}
	\]
	which is the desired inequality.
\end{proof}

\begin{cor}
	\label{proposition gn}
	Let $\cG$ be a compact metric graph, $N \geq 1$. Then, for every $p \in \left(2,2(N+1)/(N-1)\right)$ ($p \in (2, +\infty)$ if $N=1$), there exists a constant $C>0$ such that
	\begin{equation}
		\label{GN}
		\|u\|_p^p \leq C \|u\|^{\theta(p)}_{H^1}\|u\|^{p-\theta(p)}_2 \qquad \forall u \in H^1(\R^N \x \cG), 
	\end{equation}
	where $\theta(p):=(N+1)(p-2)/2$.
\end{cor}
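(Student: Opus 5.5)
The plan is to deduce Corollary \ref{proposition gn} directly from Proposition \ref{prop GN localizzata}. The only extra ingredient needed is the trivial bound $S(u)\le \|u\|_{L^2(\R^N\x\cG)}$. This bound holds because each slab $Q_n\x\cG$ is contained in $\R^N\x\cG$, so $\|u\|_{L^2(Q_n\x\cG)}\le\|u\|_2$ for every $n$; taking the supremum over $n$ gives the claim.

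Next, I will check that the exponent of $S(u)$ in \eqref{GN localizzata}, namely $p-\theta(p)-2\delta(p)$, is nonnegative. This was already verified in the proof of Proposition \ref{prop GN localizzata}, where the strict inequality $\frac{p-\theta}{2}-\delta>0$ was shown in both cases $\theta\ge2$ and $\theta<2$. Because the exponent is nonnegative, the map $s\mapsto s^{p-\theta-2\delta}$ is nondecreasing on $[0,\infty)$. Substituting $S(u)\le\|u\|_2$ into \eqref{GN localizzata} then gives
\[
\|u\|_p^p\le C\|u\|_{H^1}^{\theta(p)}\|u\|_2^{2\delta(p)}\|u\|_2^{p-\theta(p)-2\delta(p)}=C\|u\|_{H^1}^{\theta(p)}\|u\|_2^{p-\theta(p)},
\]
which is \eqref{GN}, with the same constant $C=C(N,p,\cG)$.

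I do not expect any real obstacle. The only points requiring care are the sign of the exponent of $S(u)$, which is already settled, and the degenerate case $u\equiv0$, which is trivial. The range of $p$ is identical to that of Proposition \ref{prop GN localizzata}, so no additional restriction arises.
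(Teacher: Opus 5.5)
Your proposal is correct and is essentially the paper's own proof: insert $S(u)\le\|u\|_2$ into \eqref{GN localizzata} and use $2\delta(p)+\bigl(p-\theta(p)-2\delta(p)\bigr)=p-\theta(p)$. Your explicit check that the exponent of $S(u)$ is nonnegative, taken from the proof of Proposition \ref{prop GN localizzata}, makes precise a monotonicity step that the paper uses only implicitly.
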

\begin{proof}
	It suffices to insert $S(u) \leq \|u\|_{L^2(\R^N \x \cG)}$ into \eqref{GN localizzata}, and to observe that $2\delta+\left(p-\theta-2\delta\right)=p-\theta$.
\end{proof}

\begin{remark}
	\label{GN localizzata radiale}
	Let $Q_0$ be the cube of the decomposition \eqref{decomposition R^N} centred at $0 \in \R^N$, and let $u$ be radially symmetric and decreasing in the variable $x$. Then it is not difficult to check that $S(u) = \|u\|_{L^2(Q_0 \x \cG)}$, namely $S(u)$ is achieved on the cube with center at $0$.
\end{remark}

\begin{remark}
	\label{vanishing}
	Proposition \ref{prop GN localizzata} is a Gagliardo-Nirenberg counterpart of Lions' vanishing lemma: if $\{u_n\} \subseteq H^1(\R^N \x \cG)$ is bounded and $S(u_n) \to 0$, then $u_n \to 0$ in $L^p(\R^N \x \cG)$ for every $p$ as above. A vanishing lemma of a similar flavour, on two dimensional open books, is \cite[Lemma 3.4]{LeCozShakarov}.
\end{remark}

 For future convenience, we reformulate the Gagliardo-Nirenberg inequality replacing the standard $H^1$ norm of $u$ with an equivalent norm: by \eqref{GN}, for every $B>0$ there exist an optimal constant $K_{\R^N \x \cG,p,B}>0$ such that
	\begin{equation}
		\label{GN con costanti}
		\|u\|_p^p \leq K_{\R^N \x \cG,p,B} \left(\|\nabla_{x,y}u\|_2^2+\frac{B}{l(\cG)^2}\|u\|_2^2\right)^{\frac{\theta(p)}{2}}\|u\|^{p-\theta(p)}_2 \qquad \forall u \in H^1(\R^N \x \cG).
	\end{equation}
	The optimal constant is characterized as
	\[
	K_{\R^N \x \cG,p,B}:=
	\sup\limits_{u \in H^1(\R^N \x \cG) \setminus \{0\}} Q_{p,B}(u, \R^N \x \cG),
	\]
	where 
	\beq\label{def Q GN}
	Q_{p,B}(u, \R^N \x \cG):=\frac{\|u\|_p^p}{\left(\|\nabla_{x,y}u\|_2^2+\frac{B}{l(\cG)^2}\|u\|_2^2\right)^{\frac{\theta(p)}{2}} \|u\|_2^{p-\theta(p)}}. 
	\eeq
	This new version of the Gagliardo-Nirenberg inequality has the advantage that the optimal constant for \eqref{GN con costanti} is invariant under dilatations of $\cG$ by a factor $L>0$, namely
	\begin{equation}\label{invarianza costante GN}
		K_{\R^N \x \cG,p,B}=K_{\R^N \x L\cG,p,B} \qquad \forall L>0,
	\end{equation}
	by a trivial application of the transformation \eqref{scaling}.

\begin{remark}\label{def KRN+1}
	On $\R^{N+1}$ and $\R^{N+1}_+:=\R^N \x [0, +\infty)$, the Gagliardo-Nirenberg inequality takes the following forms:
	\[
	\|u\|_p^p \leq K_{\R^{N+1},p} \|\nabla_{x,y}u\|_2^{\theta(p)}\left\|u\right\|_2^{p-\theta(p)}, \qquad \|u\|_p^p \leq K_{\R^{N+1}_+,p} \|\nabla_{x,y}u\|_2^{\theta(p)}\left\|u\right\|_2^{p-\theta(p)},
	\]
	for every $u \in H^1(\R^{N+1})$ and $u \in H^1(\R^{N+1}_+)$, respectively (note that on the right hand side we have the $L^2$ norm of the gradient instead of the full $H^1$ norm of $u$). The validity of the inequality on $\R^{N+1}_+$ can be proved by extending each $u \in H^1(\R^{N+1}_+)$ on the whole space $\R^{N+1}$ by even symmetry with respect to $\{x_{N+1}=0\}$, and then applying the inequality on $\R^{N+1}$. The values $K_{\R^{N+1},p}$ and $K_{\R^{N+1}_+,p}$ are the optimal constants for the inequalities, defined by
	\[
	K_{\R^{N+1},p}:=\sup\limits_{u \in H^1(\R^{N+1})\setminus\{0\}} Q_p(u, \R^{N+1})
	\qquad 	K_{\R^{N+1}_+,p}:=\sup\limits_{u \in H^1(\R^{N+1}_+)\setminus\{0\}} Q_p(u, \R^{N+1}_+),
	\]
	where
	\[\begin{split}
		Q_p\left(u, \Omega\right):=\frac{\|u\|_{L^p(\Omega)}^p}{\left\|\nabla_{x,y}u\right\|_{L^2(\Omega)}^{\theta(p)}\|u\|_{L^2(\Omega)}^{p-\theta(p)}}.
	\end{split}
	\]
	Since one can choose a radially symmetric (and decreasing) maximizing sequence for $K_{\R^{N+1},p}$, arguing by even symmetry with respect to $\{x_{N+1}=0\}$ it is not difficult to check that $
	2^{\frac{p}{2}-1}K_{\R^{N+1},p}=K_{\R^{N+1}_+,p}$.	In a similar way, if \(I\) is segment of length $\pi$, and \(S_1\) is the unit circle, both regarded as metric graphs, we have $
	2^{\frac{p}{2}-1}K_{\R^N \x S_1,p,4B}=K_{\R^N \x I,p,B}$.
\end{remark}

In the next proposition, we compare the optimal constants on general graphs with the prototypical cases of $\R^N \times S_1$ and $\R^N \times I$. We recall that assumption (H) was introduced in \eqref{(H)}.		
\begin{proposition}
	\label{stime masse critiche}
	     Let $\cG$ be a compact metric graph. Then, for every $p \in (2, 2(N+1)/(N-1))$ ($p \in (2, +\infty)$ if $N=1$),
			\begin{equation}\label{confronto costanti g.n. dall'alto}
				K_{\R^N \x \cG,p,B} \leq K_{\R^N \x I,p,B}, \qquad \text{ and } \qquad K_{\R^N \x \cG,p,B} \leq K_{\R^N \x S_1,p,B} \text{ if $\cG$ satisfies condition $(H)$.}
			\end{equation}
		Moreover, if $p \geq 2_*=2+\frac{4}{N+1}$, 
			\begin{equation}\label{confronto costanti g.n. dal basso}
				K_{\R^{N+1},p} \leq K_{\R^N \x \cG,p,B}, \qquad \text{ and } \qquad 	K_{\R^{N+1}_+,p} \leq K_{\R^N \x \cG,p,B} \text{ if $\cG$ contains a terminal edge.}
			\end{equation}
\end{proposition}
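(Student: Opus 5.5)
The upper bounds \eqref{confronto costanti g.n. dall'alto} will follow from the partial rearrangements of Section \ref{sez riarrangiamenti}, and the lower bounds \eqref{confronto costanti g.n. dal basso} from concentration of test functions at small scales.

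\textbf{Upper bounds.} By the dilation invariance \eqref{invarianza costante GN}, I first rescale so that $l(\cG)=\pi$. Then $\cG^*=[0,\pi]$, which is $I$, and $\widehat{\cG}=S_{1/2}$, a circle of length $\pi$. Take $u\in H^1(\R^N\x\cG)\setminus\{0\}$. Since $Q_{p,B}(|u|)=Q_{p,B}(u)$, I may assume $u\ge0$. By Proposition \ref{propr y-riarrangiamento decr}, $u^{*y}\in H^1(\R^N\x I)$ has the same $L^2$ and $L^p$ norms and a smaller or equal Dirichlet energy. Since $l(I)=l(\cG)$, the term $B/l^2$ is unchanged, so $Q_{p,B}(u,\R^N\x\cG)\le Q_{p,B}(u^{*y},\R^N\x I)\le K_{\R^N\x I,p,B}$. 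Taking the supremum gives the first inequality.

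Under (H), Remark \ref{remark sulla condiz (H)} says that $u(x,\cdot)$ attains almost every value at least twice, so Proposition \ref{propr y-riarrangiamento simm} (iii) applies. Hence $\widehat u^y$ lives on $\R^N\x S_{1/2}$, which has total length $\pi=l(\cG)$, and the same argument yields $K_{\R^N\x\cG,p,B}\le K_{\R^N\x S_{1/2},p,B}$. The last constant equals $K_{\R^N\x S_1,p,B}$ by \eqref{invarianza costante GN}, because $S_{1/2}$ is a dilation of $S_1$ and the constant is defined with $B/l^2$. One should double-check that the normalizations match; as a safeguard, the relation in Remark \ref{def KRN+1} for $K_{\R^N\x S_1,p,4B}$ and $K_{\R^N\x I,p,B}$ suggests rechecking the factor $4$ coming from the lengths $2\pi$ versus $\pi$. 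Since $B/l^2$ is scale invariant, dilation alone should suffice.

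\textbf{Lower bounds ($p\ge2_*$).} Take $\varphi\in C_c^\infty(\R^{N+1})$ that nearly attains $K_{\R^{N+1},p}$, and set $\varphi_\lambda(z)=\varphi(\lambda z)$ for $\lambda$ large. Place it in the interior of a slab $\R^N\x e$, where $e$ is any edge, away from the vertices; for large $\lambda$ its support fits. The function extended by zero lies in $H^1(\R^N\x\cG)$, since its traces vanish at the vertices. A scaling computation gives
\[
\|\varphi_\lambda\|_p^p=\lambda^{-(N+1)}\|\varphi\|_p^p,\qquad \|\nabla\varphi_\lambda\|_2^2=\lambda^{2-(N+1)}\|\nabla\varphi\|_2^2,\qquad \|\varphi_\lambda\|_2^2=\lambda^{-(N+1)}\|\varphi\|_2^2.
\]
It follows that
\[
Q_{p,B}(\varphi_\lambda)=\frac{\|\varphi\|_p^p\,\lambda^{-(N+1)}}{\lambda^{-(N+1)(p-\theta)/2}\,\bigl(\lambda^{1-N}\|\nabla\varphi\|_2^2+\tfrac{B}{l^2}\lambda^{-N-1}\|\varphi\|_2^2\bigr)^{\theta/2}\,\|\varphi\|_2^{p-\theta}}.
\]
Factoring out $\lambda^{1-N}$ shows that the ratio tends to $Q_p(\varphi,\R^{N+1})\cdot\lambda^{\gamma}$ with $\gamma=-(N+1)+(N+1)(p-\theta)/2+(N-1)\theta/2$. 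Using $\theta=(N+1)(p-2)/2$, one gets $\gamma=\theta-(N+1)(p-2)/2=0$, and the $B$-term vanishes as $\lambda\to\infty$. So the limit is $Q_p(\varphi,\R^{N+1})$, and hence $K_{\R^{N+1},p}\le K_{\R^N\x\cG,p,B}$. At this level this works for every $p$; the restriction $p\ge2_*$ is harmless here.

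For a terminal edge $e\simeq[0,\ell]$ with free end at $y=0$, I take $\varphi\in H^1(\R^{N+1}_+)$ with compact support, nearly optimal for $K_{\R^{N+1}_+,p}$, and place it at the free end with its support in $\R^N\x[0,\ell/2)$ after concentration. No trace condition is imposed at the terminal vertex, since it has degree one. The same scaling then gives $K_{\R^{N+1}_+,p}\le K_{\R^N\x\cG,p,B}$.

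\textbf{Main obstacle.} The one step needing care is the density claim: optimizers of $K_{\R^{N+1}_+,p}$ can be approximated by compactly supported $H^1(\R^{N+1}_+)$ functions without any boundary condition. This is standard by truncation and continuity of $Q_p$ in $H^1$.
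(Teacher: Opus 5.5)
Your proposal is correct and follows the paper's proof: the upper bounds come from the $y$-decreasing and $y$-symmetric rearrangements combined with the dilation invariance \eqref{invarianza costante GN}. Your conclusion that dilation alone suffices is right, since $l(\cG^*)=l(\widehat\cG)=l(\cG)$ and $S_{1/2}$ is a dilation of $S_1$. The lower bounds come from concentrating a near-optimizer inside a single slab, or at the free end of a terminal edge.

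The one difference is in the lower bounds. The paper uses the scaling \eqref{scaling} and keeps the $L^2$ norm bounded, which needs $p\ge 2_*$. You instead use a pure dilation together with the $0$-homogeneity of $Q_{p,B}$. This correctly shows that the inequalities in \eqref{confronto costanti g.n. dal basso} hold for every admissible $p$, not only for $p\ge 2_*$.
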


\begin{proof}
	\textit{First step: proof of \eqref{confronto costanti g.n. dall'alto}.} Let $\{u_n\} \subseteq H^1(\R^N \x \cG)\setminus \{0\}$ be a maximizing sequence for $Q_{p,B}(\cdot, \R^N \x \cG)$. By the properties of the $y$-decreasing rearrangement stated in Proposition \ref{propr y-riarrangiamento decr}, the rearranged sequence $\{u_n^{*y}\}\subseteq H^1(\R^N \x \cG^{*})$ satisfies
			\[\begin{split}
				K_{\R^N \x \cG,p,B}&=\lim\limits_n Q_{p,B}\left(u_n, \R^N \x \cG\right) \leq \limsup\limits_n Q_{p,B}\left(u_n^{*y}, \R^N \x \cG^*\right) 
				\leq K_{\R^N \x \cG^*, p, B}=K_{\R^N \x I, p, B},
			\end{split}\]
	which proves the first inequality. Assume now that $\cG$ admits a cycle covering. Let $\left\{\widehat{u_n}^y\right\}\subseteq H^1(\R^N \x S_1)$ denote the $y$-symmetric rearrangement of $\{u_n\}$. By Proposition \ref{propr y-riarrangiamento simm} and Remark \ref{remark sulla condiz (H)},
		\[\begin{split}
				K_{\R^N \x \cG, p, B}=\lim\limits_n Q_{p, B}\left(u_n, \R^N \x \cG\right) \leq \limsup\limits_n Q_{p, B}\left(\widehat{u_n}^y, \R^N \x \widehat{\cG}\right)
			   \leq K_{\R^N \x \widehat{\cG}, p, B}=K_{\R^N \x S_1, p, B},
		\end{split}\]
	which is the desired result. \\
	\textit{Second step: proof of the first inequality in \eqref{confronto costanti g.n. dal basso}.} We argue by a standard embedding argument. Let $\{u_n\} \subseteq C^{\infty}_c(\R^{N+1})$ be a maximizing sequence for $Q_p(\cdot, \R^{N+1})$. Since the quotient $Q_p(v, \R^{N+1})$ is homogeneous, we may normalize 
		\[
		\|u_n\|_{L^2(\R^{N+1})}^2=1 \qquad \forall n \in \mathbb{N}.
		\]
	Fix any edge $e \in \mathfrak{E}$. For each $n$ choose $L_n>0$ sufficiently small so that the rescaled function $(u_n)_{L_n}$, defined in \eqref{scaling}, is supported in $\R^N \x e$. Hence $(u_n)_{L_n}$ can be regarded as an element of both $H^1(\R^{N+1})$ and $H^1(\R^N\times\mathcal G)$. Choosing $L_n$ smaller if necessary, we may further assume that
		\[
		L_n \rightarrow 0, \qquad  \|(u_n)_{L_n}\|_{L^p(\R^{N+1})}^p \rightarrow +\infty, \qquad  \left\|\nabla_{x,y}(u_n)_{L_n}\right\|_{L^2(\R^{N+1})}^2 \rightarrow +\infty.
		\] 
    By Remark \ref{remark scaling},	
		\[
			\left\|\left(u_n\right)_{L_n}\right\|_{L^2\left(\R^{N+1}\right)}^2=L_n^{-\frac{4-(N+1)(p-2)}{p-2}}\left\|u_n\right\|_{L^2\left(\R^{N+1}\right)}^2,
		\]	
	and therefore
	    \[
			\left\|\nabla_{x,y}\left(u_n\right)_{L_n}\right\|_{L^2\left(\R^{N+1}\right)}^2+\frac{B}{l(\cG)^2}\left\|\left(u_n\right)_{L_n}\right\|_{L^2\left(\R^{N+1}\right)}^2 \asymp \left\|\nabla_{x,y}\left(u_n\right)_{L_n}\right\|_{L^2\left(\R^{N+1}\right)}^2 
		\]
	as $n \to \infty$. Indeed, $\|\nabla_{x,y}(u_n)_{L_n}\|_{L^2(\R^{N+1})}^2 \rightarrow +\infty$, while $\left\|u_n\right\|_{L^2\left(\R^{N+1}\right)} \equiv 1$ and $L_n^{{-(4-(N+1)(p-2)) / (p-2)}}$ remains bounded, since $L_n \rightarrow 0$ and $-(4-(N+1)(p-2))/(p-2) \geq 0$.\\
	Since $Q_p(v_L, \R^{N+1})=Q_p(v, \R^{N+1})$ for every $L>0$, the sequence $\{(u_n)_{L_n}\} \subseteq H^1(\R^n \x \cG)$ is still maximizing. Consequently,
		\[\begin{split}
			K_{\R^{N+1},p} &=\lim\limits_{n} \frac{\|(u_n)_{L_n}\|_p^p}{\left\|\nabla_{x,y}(u_n)_{L_n}\right\|_2^{\theta(p)}\|(u_n)_{L_n}\|_2^{p-\theta(p)}}\\ 
			&\leq \limsup \limits_{n} \frac{\|(u_n)_{L_n}\|_p^p}{\left(\left\|\nabla_{x,y}(u_n)_{L_n}\right\|_2^2+\frac{B}{\left(l(\cG)\right)^2}\|(u_n)_{L_n}\|_2^2\right)^{\frac{\theta(p)}{2}}\|(u_n)_{L_n}\|_2^{p-\theta(p)}}\\
			& = \limsup\limits_n Q_{p, B}\left((u_n)_{L_n}, \R^N \x \cG\right)\leq K_{\R^N \x \cG,p,B},
	      \end{split}\]
     which proves the first inequality. \\
     \textit{Third step: proof of the the second inequality in \eqref{confronto costanti g.n. dal basso}.} If $\cG$ contains a terminal edge $\bar e$, let $\{u_n\} \subseteq C^{\infty}_c(\R^{N+1}_+)$ be a maximizing sequence for $Q_p(\cdot, \R^{N+1}_+)$. Choosing $L_n \rightarrow 0$ sufficiently small, the rescaled functions $(u_n)_{L_n}$ are supported in $\R^N \x \bar e \subseteq \R^N \x \cG$. Repeating verbatim the argument of the previous step, we obtain the second inequality in \eqref{confronto costanti g.n. dal basso}.
\end{proof}
			\begin{remark}
			Obtaining explicit upper bounds for $K_{\R^N \x \cG, p, B}$ seems to be a challenging problem. By Proposition \ref{stime masse critiche}, $K_{\R^N \x \cG, p, B}$ is bounded above by $K_{\R^N \x I, p, B}$, or by $K_{\R^N \x S_1, p, B}$ if $\cG$ satisfies $(H)$, and Remark \ref{def KRN+1} relates the two; the problem thus reduces to estimating $K_{\R^N \x S_1, p, B}$ for a given $B$. A straightforward interpolation argument yields
	\[
	K_{\R^N \x S_1, p, B}\leq \left(K_{\R^N \x S_1, 2^*, B}\right)^{\frac{\theta(p)}{2^*}}, \qquad 2^*:=\frac{2(N+1)}{N-1},
	\]
	where at the Sobolev exponent $p=2^*$ one has $\theta(2^*)=2^*$ and \eqref{GN con costanti} reduces to the Sobolev inequality on $\R^N \x \cG$; moreover, by \cite[Theorem 7.2]{hebeynonlinear} there exists $B=B(\R^N \x S_1)$ with $K_{\R^N \x S_1, 2^*, B}=K(N+1,2)^{2^*}$, $K(N+1,2)$ being the optimal Sobolev constant of $\R^{N+1}$. Estimating such a $B$, however, appears to be nontrivial. For manifolds of the form $\R \x S^k$, $k \geq 2$, one has the explicit value $B(\R \x S^k)=\frac{k-1}{4k}Scal_{(\R\x S^k,g)}$ \cite[Theorem 7.7]{hebeynonlinear}, but the proof relies on $\R \x S^k$ being conformally flat, of constant scalar curvature \emph{and} simply connected, and the last property fails for $\R^N \x S_1$. Note also that the above formula, if extended to $\R^N \x S_1$, would give $B=0$, the product metric being flat: this is a further indication that it does not extend to our setting, although one should keep in mind that $K_{\R^N \x \cG,p,B}$ itself could diverge as $B \to 0$, so that no conclusion on the thresholds of Section \ref{supercritical case} can be drawn directly. Finding an explicit upper bound for $B$ remains an open problem.
\end{remark}

In the $L^2$-critical case $p=2_*=2+4/(N+1)$, the asymptotic behavior of the optimal
constants on the model products as \(B\to+\infty\) can be determined
exactly.

\begin{proposition}\label{p:asymptotic-GN-constants}
Let \(I\) be a segment of length \(\pi\), and let \(S_1\) be the unit
circle. Then
\[
\lim_{B\to+\infty}
K_{\mathbb R^N\times S_1,2_*,B}
=
K_{\mathbb R^{N+1},2_*} \quad \text{and} \quad 
\lim_{B\to+\infty}
K_{\mathbb R^N\times I,2_*,B}
=
K_{\mathbb R^{N+1}_+,2_*}
=
2^{\frac{2}{N+1}}K_{\mathbb R^{N+1},2_*}.
\]
\end{proposition}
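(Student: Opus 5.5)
The strategy is to prove the statement for $S_1$ first and deduce the segment case from it. For $S_1$ I will squeeze $K_{\R^N\x S_1,2_*,B}$ between a lower bound already available from Proposition \ref{stime masse critiche} and an upper bound obtained by a blow-up and localization argument.

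\textbf{Reduction and lower bound.} By Remark \ref{def KRN+1}, $K_{\R^N\x I,2_*,B}=2^{\frac{2}{N+1}}K_{\R^N\x S_1,2_*,4B}$ and $K_{\R^{N+1}_+,2_*}=2^{\frac{2}{N+1}}K_{\R^{N+1},2_*}$, since $\frac{2_*}{2}-1=\frac{2}{N+1}$. Letting $B\to+\infty$ (so $4B\to+\infty$ too), it therefore suffices to prove $\lim_{B\to\infty}K_{\R^N\x S_1,2_*,B}=K_{\R^{N+1},2_*}$. The quotient $Q_{2_*,B}$ is nonincreasing in $B$, so the limit exists. Proposition \ref{stime masse critiche}, whose first inequality in \eqref{confronto costanti g.n. dal basso} holds for every compact graph, gives $K_{\R^{N+1},2_*}\le K_{\R^N\x S_1,2_*,B}$ for all $B$. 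It remains to show
\[
\limsup_{B\to\infty}K_{\R^N\x S_1,2_*,B}\le K_{\R^{N+1},2_*}.
\]

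\textbf{Blow-up.} Argue by contradiction, with $p=2_*$ and $\theta=2$. Suppose there are $B_n\to\infty$ and $u_n$ with $\|u_n\|_2=1$ and $Q_{2_*,B_n}(u_n)\ge K_{\R^{N+1},2_*}+\varepsilon$. By Corollary \ref{proposition gn}, $\|u_n\|_p^p\le C(\|\nabla u_n\|_2^2+1)$. Comparing with the lower bound $Q\ge\varepsilon$ forces $\|\nabla u_n\|_2^2\gtrsim B_n\to\infty$, and hence
\[
\|u_n\|_p^p\ge (K_{\R^{N+1},2_*}+\varepsilon)\|\nabla u_n\|_2^2 .
\]
Now rescale isotropically by $\lambda_n:=\|\nabla u_n\|_2\to\infty$ in the mass-preserving way $v_n(z)=\lambda_n^{-(N+1)/2}u_n(z/\lambda_n)$. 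This gives $v_n$ on $\R^N\x S_{\lambda_n}$ with $\|v_n\|_2=\|\nabla v_n\|_2=1$ and $\|v_n\|_p^p\ge K_{\R^{N+1},2_*}+\varepsilon$; this uses the $L^2$-critical invariance of the quotient without the $B$ term. The circle now has length $2\pi\lambda_n\to\infty$.

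\textbf{Localization (main step).} Fix $R\gg1$ and $\delta\in(0,1)$. Take a periodic partition $\{\chi_j\}$ in the $y$ variable only, with $\sum_j\chi_j^2=1$, each $\chi_j$ supported in an interval of length $R$, equal to $1$ except on transition zones of total relative measure $\le\delta$, and $|\chi_j'|\le C/(\delta R)$. Each $\chi_jv_n$ is supported in a strip isometric to a subset of $\R^{N+1}$ (as $R<2\pi\lambda_n$ for large $n$). The Euclidean Gagliardo--Nirenberg inequality with $\|\chi_jv_n\|_2\le1$ and the IMS identity then give
\[
\sum_j\|\chi_jv_n\|_p^p\le K_{\R^{N+1},2_*}\sum_j\|\nabla(\chi_jv_n)\|_2^2\le K_{\R^{N+1},2_*}\Bigl(1+\frac{C}{\delta^2R^2}\Bigr).
\]
The obstacle is the defect $\int(1-\sum_j\chi_j^p)|v_n|^p$, since $\chi_j^p\le\chi_j^2$; this defect lives only on the transition zones. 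I would average over all translates $\chi_j(\cdot-s)$, $s$ in one period. The defect averaged over $s$ is at most $C\delta\|v_n\|_p^p\le C'\delta$, with $\|v_n\|_p^p$ bounded by GN on $\R^N\x S_{\lambda_n}$ for $\lambda_n\ge1$ (cover by unit slabs as in Proposition \ref{prop GN localizzata}). So some shift $s$ achieves
\[
\|v_n\|_p^p\le K_{\R^{N+1},2_*}\Bigl(1+\frac{C}{\delta^2R^2}\Bigr)+C'\delta .
\]
Choosing $\delta$ small and then $R$ large contradicts $\|v_n\|_p^p\ge K_{\R^{N+1},2_*}+\varepsilon$, which completes the proof.
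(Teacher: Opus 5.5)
Your argument is correct, but you organize the upper bound differently from the paper, which has no contradiction or blow-up step. Using \eqref{invarianza costante GN}, the paper passes to the circle $S_{r_B}$ with $r_B=\sqrt B/(2\pi)$, so that the penalization in the denominator is exactly $+\|v\|_2^2$. It then integrates the Euclidean inequality over all translates $\chi_{R,a}$, $a\in S_{r_B}$, of one plateau cutoff with fixed $O(1)$ transition layers. After integration in $a$ the cross term cancels, and the cutoff-gradient term $J(R)\|v\|_2^2$ is absorbed by the $\|v\|_2^2$ already in the denominator (since $J(R)\le A_2(R)$). The $L^{2_*}$ loss enters only through $A_2(R)/A_{2_*}(R)\to 1$. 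With $R=\pi r_B/2$ this gives directly $K_{\R^N\x S_1,2_*,B}\le K_{\R^{N+1},2_*}\bigl(1+O(B^{-1/2})\bigr)$.

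Since $\chi_{R,a}/\sqrt{A_2(R)}$ is a continuous quadratic partition of unity, this is the same device as your IMS partition plus averaging over shifts. The real difference is what pays for the cutoff gradient. The paper pays with the $B$-term. You discard the $B$-term and pay with the normalization $\|\nabla v_n\|_2=1$, which is why you need the preliminary concentration step $\|\nabla u_n\|_2^2\gtrsim B_n$ and a uniform bound on $\|v_n\|_{2_*}^{2_*}$. That bound is in fact free: either $\|v_n\|_{2_*}^{2_*}=\lambda_n^{-2}\|u_n\|_{2_*}^{2_*}\le C(1+\lambda_n^{-2})$ by Corollary \ref{proposition gn}, or you absorb $\delta\|v_n\|_{2_*}^{2_*}$ into the left-hand side. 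Your version is less direct and not quantitative as written, but it makes explicit that only concentrating sequences can beat the Euclidean constant.

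Incidentally, your reduction $K_{\R^N\x I,2_*,B}=2^{\frac{2}{N+1}}K_{\R^N\x S_1,2_*,4B}$ is the one consistent with Remark \ref{def KRN+1}; the paper's Step 4 omits the factor $4$, which is harmless as $B\to+\infty$.
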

The following discussion can in fact be adapted, with minor modifications, to all $p \geq 2_*$. However, since we will only apply these results in the case $p=2_*$, we restrict our attention to this case.

\begin{proof}
We divide the proof into several steps. The first three deal with $K_{\mathbb R^N\times S_1,2_*,B}$, while the remaining one concerns $K_{\mathbb R^N\times I,2_*,B}$. Recall that the constant \(K_{\mathbb R^N\times \cG,p,B}\) is the supremum of $Q_{2_*,B}(u,\mathbb R^N\times \cG)$ defined in \eqref{def Q GN}. Moreover, at \(p=2_*\), we have $\theta(2_*)=2$ (where $\theta(p)$ is defined in Proposition \ref{prop GN localizzata}).

\smallskip
\noindent
\emph{Step 1) Reduction to cylinders with diverging radius.} As shown in \eqref{invarianza costante GN},
\[
K_{\mathbb R^N\times S_1,2_*,B}
=
K_{\mathbb R^N\times S_r,2_*,B}
\qquad\text{for every }r>0.
\]
We now choose
\[
r_B:=\frac{\sqrt B}{2\pi} \quad \implies \quad \frac{B}{\ell(S_{r_B})^2}
=
\frac{B}{(2\pi r_B)^2}
=1.
\]
Therefore,
\[
K_{\mathbb R^N\times S_1,2_*,B}
=
\mathcal K_{r_B}, \quad \text{where} \quad 
\mathcal K_r
:=
\sup_{v\in H^1(\mathbb R^N\times S_r)\setminus\{0\}}
\mathcal Q_r(v)
\]
and
\[
\mathcal Q_r(v)
:=
\frac{\|v\|_{L^{2_*}(\mathbb R^N\times S_r)}^{2_*}}
{\left(
 \|\nabla_{x,y}v\|_{L^2(\mathbb R^N\times S_r)}^2
 +\|v\|_{L^2(\mathbb R^N\times S_r)}^2
 \right)
 \|v\|_{L^2(\mathbb R^N\times S_r)}^{2_*-2}}.
\]
Since \(r_B\to+\infty\) as \(B\to+\infty\), it remains to prove that
\beq\label{lim K 129}
\lim_{r\to+\infty}\mathcal K_r
=
K_{\mathbb R^{N+1},2_*};
\eeq
indeed, once that this estimate is proved, we infer that
\[
\lim_{B \to +\infty} K_{\R^N \times S_1,2_*,B}= \lim_{r_B\to+\infty} \mathcal K_{r_B}
=
K_{\mathbb R^{N+1},2_*}, 
\]
which completes the first part.

\medskip
\noindent
\emph{Step 2) The lower bound.} As shown in (4.20), $\mathcal K_r = K_{\R^N \x S_1, p, 4\pi^2 r^2} \geq K_{\R^{N+1}, 2_*}$. In particular, 
\[
\liminf_{r\to+\infty}\mathcal K_r
\geq K_{\mathbb R^{N+1},2_*}.
\]
\medskip
\noindent
\emph{Step 3) The upper bound.}
For \(R>1\), let $\chi_R\in C_c^\infty(\mathbb R)$ be such that
\[
0\leq\chi_R\leq1,\qquad
\chi_R=1\ \text{on }[-R,R],\qquad
\chi_R=0\ \text{outside }[-R-1,R+1],
\]
and such that the two transition parts of \(\chi_R\) are translates
of two fixed profiles. For \(q\geq1\), set
\[
A_q(R):=\int_{\mathbb R}|\chi_R(t)|^q\,dt,
\qquad
J(R):=\int_{\mathbb R}|\chi_R'(t)|^2\,dt.
\]
By construction, $A_q(R)=2R+O(1)$ and $J(R)=O(1)$
 as $R\to+\infty$,
so that in particular,
\beq\label{1291}
\frac{A_2(R)}{A_{2_*}(R)}\longrightarrow1
\qquad\text{and}\qquad
J(R)\leq A_2(R)
\eeq
for all sufficiently large \(R\). Assume that $2(R+1)<2\pi r$. For \(a\in S_r\), define the periodic cutoff
\[
\chi_{R,a}(y)
:=
\sum_{k\in\mathbb Z}
\chi_R(y-a+2\pi r k).
\]
The assumption \(2(R+1)<2\pi r\) ensures that the supports of the
terms in this sum are pairwise disjoint, and each support has size smaller than $2\pi r$. Then, $\chi_{R,a}$ can be seen as an element in $H^1(S_r)$. Now, given
\(v\in H^1(\mathbb R^N\times S_r)\), let
\[
v_a(x,y):=\chi_{R,a}(y)v(x,y).
\]
The function \(v_a\) is supported in an arc strictly shorter than the
whole circle. By opening this arc and extending \(v_a\) by zero, we
may regard it as a function in \(H^1(\mathbb R^{N+1})\). Therefore,
the sharp Euclidean Gagliardo--Nirenberg inequality gives
\[
\|v_a\|_{L^{2_*}(\mathbb R^{N+1})}^{2_*}
\leq
K_{\mathbb R^{N+1},2_*}
\|\nabla v_a\|_{L^2(\mathbb R^{N+1})}^2
\|v_a\|_{L^2(\mathbb R^{N+1})}^{2_*-2} \le K_{\mathbb R^{N+1},2_*}
\|\nabla v_a\|_{L^2(\mathbb R^{N+1})}^2\|v\|_2^{2_*-2},
\]
where we used the fact that \(0\leq\chi_{R,a}\leq1\). Integrating the previous inequality with respect to \(a\in S_r\), we
obtain
\[
\int_{S_r}\|v_a\|_{2_*}^{2_*}\,da
\leq
K_{\mathbb R^{N+1},2_*}
\|v\|_2^{2_*-2}
\int_{S_r}\|\nabla v_a\|_2^2\,da.
\]
For every \(y\in S_r\), translation invariance in the variable \(a\)
gives
\[
\int_{S_r}|\chi_{R,a}(y)|^q\,da=A_q(R) \quad \implies \quad 
\int_{S_r}\|v_a\|_{2_*}^{2_*}\,da
=
A_{2_*}(R)\|v\|_{2_*}^{2_*}.
\]
We next compute the Dirichlet term. For the derivatives in the
\(x\)-variables,
\[
\int_{S_r}
\|\chi_{R,a}\nabla_xv\|_2^2\,da
=
A_2(R)\|\nabla_xv\|_2^2.
\]
Moreover, $\partial_yv_a
=
\chi_{R,a}\partial_yv
+
(\partial_y\chi_{R,a})v$. The mixed term vanishes after integration with respect to \(a\),
because
\[
\int_{S_r}
\chi_{R,a}(y)\partial_y\chi_{R,a}(y)\,da
=
\frac12\int_{\mathbb R}(\chi_R^2)'(t)\,dt
=0.
\]
It follows that
\[
\int_{S_r}\|\partial_yv_a\|_2^2\,da
=
A_2(R)\|\partial_yv\|_2^2
+
J(R)\|v\|_2^2.
\]
Combining the identities above, we obtain
\[
\int_{S_r}\|\nabla v_a\|_2^2\,da
=
A_2(R)\|\nabla_{x,y}v\|_2^2
+
J(R)\|v\|_2^2.
\]
Therefore, recalling also that
\(J(R)\leq A_2(R)\) for \(R\) sufficiently large, 
\[
A_{2_*}(R)\|v\|_{2_*}^{2_*}
\leq
K_{\mathbb R^{N+1},2_*}A_2(R)
\left(
 \|\nabla_{x,y}v\|_2^2+\|v\|_2^2
\right)
\|v\|_2^{2_*-2}.
\]
It follows that for every $v\in H^1(\R^N \x \cG)$,
\[
\mathcal Q_r(v)
\leq
K_{\mathbb R^{N+1},2_*}
\frac{A_2(R)}{A_{2_*}(R)} \quad \implies \quad 
\mathcal K_r
\leq
K_{\mathbb R^{N+1},2_*}
\frac{A_2(R)}{A_{2_*}(R)}
\]
whenever \(2(R+1)<2\pi r\). We may now choose, for instance, $R=\pi r/2$. For all sufficiently large \(r\), this choice satisfies the required
condition, and \(R\to+\infty\) as \(r\to+\infty\). Consequently, by \eqref{1291},
\[
\limsup_{r\to+\infty}\mathcal K_r
\leq
K_{\mathbb R^{N+1},2_*}.
\]
Together with the lower bound, this proves that \eqref{lim K 129}, and completes the proof of the asymptotic estimate of $K_{\mathbb R^N\times S_1,2_*,B}$.

\medskip
\noindent
\emph{Step 4) The case of the interval.} By Remark \ref{def KRN+1}, we have
\[
K_{\mathbb R^N\times I,2_*,B}
=
2^{\frac{2_*}{2}-1}
K_{\mathbb R^N\times S_1,2_*,B} = 2^\frac{2}{N+1} K_{\mathbb R^N\times S_1,2_*,B}.
\]
Thus, the first part of the proposition yields
\[
\lim_{B\to+\infty}
K_{\mathbb R^N\times I,2_*,B}
=
2^{\frac{2}{N+1}}K_{\mathbb R^{N+1},2_*}.
\]
Finally, the even-reflection argument recalled in Remark
\ref{def KRN+1} gives $K_{\mathbb R^{N+1}_+,2_*}
=
2^{\frac{2}{N+1}}K_{\mathbb R^{N+1},2_*}$,
and the conclusion follows.
\end{proof}

Observing the the map $B \mapsto K_{\mathbb R^N\times \cG,2_*,B}$
is nonincreasing, and recalling Proposition \ref{stime masse critiche}, we deduce a useful corollary which will be the crucial ingredient in the determination of the optimal threshold for existence of ground states in the $L^2$-critical case.

\begin{cor}\label{cor: GN opt crit}
For every compact metric graph \(\cG\), the limit
\[
K^\infty_{\mathbb R^N\times \cG,2_*}
:=
\lim_{B\to+\infty}
K_{\mathbb R^N\times \cG,2_*,B}
\]
exists. Moreover, $K_{\mathbb R^{N+1},2_*}
\leq
K^\infty_{\mathbb R^N\times \cG,2_*}
\leq
K_{\mathbb R^{N+1}_+,2_*}$, and
\[
\begin{cases}
K^\infty_{\mathbb R^N\times \cG,2_*}
=
K_{\mathbb R^{N+1},2_*} & \text{if \(\cG\) satisfies condition \((H)\)}\\
K^\infty_{\mathbb R^N\times \cG,2_*}
=
K_{\mathbb R^{N+1}_+,2_*}
=
2^{\frac{2}{N+1}}K_{\mathbb R^{N+1},2_*} & \text{if \(\cG\) contains a terminal edge}.
\end{cases}
\]
\end{cor}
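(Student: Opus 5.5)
The plan is to derive the corollary directly from Proposition \ref{stime masse critiche} and Proposition \ref{p:asymptotic-GN-constants}, after first establishing monotonicity in $B$.

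\emph{Existence of the limit.} Fix $u\neq 0$. At $p=2_*$ the exponent $\theta(2_*)/2=1$ is positive, and the denominator of $Q_{2_*,B}(u,\R^N\x\cG)$ in \eqref{def Q GN} is nondecreasing in $B$. Hence $B\mapsto Q_{2_*,B}(u,\R^N\x\cG)$ is nonincreasing. Taking the supremum over $u$, the map $B\mapsto K_{\R^N\x\cG,2_*,B}$ is nonincreasing as well. By the first inequality in \eqref{confronto costanti g.n. dal basso}, valid since $p=2_*$, this map is bounded below by $K_{\R^{N+1},2_*}>0$. A monotone bounded function has a limit as $B\to+\infty$, so $K^\infty_{\R^N\x\cG,2_*}$ exists and $K^\infty_{\R^N\x\cG,2_*}\ge K_{\R^{N+1},2_*}$.

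\emph{Upper bounds.} By \eqref{confronto costanti g.n. dall'alto}, $K_{\R^N\x\cG,2_*,B}\le K_{\R^N\x I,2_*,B}$ for every $B$. Letting $B\to+\infty$ and applying Proposition \ref{p:asymptotic-GN-constants} gives $K^\infty_{\R^N\x\cG,2_*}\le K_{\R^{N+1}_+,2_*}$, which completes the two-sided bound. If $\cG$ satisfies (H), the second inequality in \eqref{confronto costanti g.n. dall'alto} gives $K_{\R^N\x\cG,2_*,B}\le K_{\R^N\x S_1,2_*,B}$. Passing to the limit with Proposition \ref{p:asymptotic-GN-constants} yields $K^\infty\le K_{\R^{N+1},2_*}$, and combined with the lower bound this gives equality.

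\emph{Terminal edge case.} If $\cG$ has a terminal edge, the second inequality in \eqref{confronto costanti g.n. dal basso} gives $K_{\R^N\x\cG,2_*,B}\ge K_{\R^{N+1}_+,2_*}$ for every $B$. Hence $K^\infty\ge K_{\R^{N+1}_+,2_*}$, and with the general upper bound we get equality. The identity $K_{\R^{N+1}_+,2_*}=2^{2/(N+1)}K_{\R^{N+1},2_*}$ follows from Remark \ref{def KRN+1}, using $2_*/2-1=2/(N+1)$.

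There is no real obstacle here. The only points to check are that the comparison inequalities hold uniformly in $B$, which they do since each is stated for every fixed $B$, and the monotonicity in $B$, which is immediate at the critical exponent.
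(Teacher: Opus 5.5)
Your proof is correct and follows the same route as the paper. The paper combines three ingredients: the monotonicity of $B\mapsto K_{\R^N\x\cG,2_*,B}$ (since $\theta(2_*)=2$), the comparisons of Proposition \ref{stime masse critiche}, which hold for every fixed $B$, and the limits of the model constants from Proposition \ref{p:asymptotic-GN-constants}. You simply spell out what the paper compresses into the single sentence preceding the statement.
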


\section{The subcritical and critical cases}\label{sez sottocritico}

The purpose of this section is to prove Theorems \ref{caso sottocritico e critico} and \ref{thm 1.3}: we study existence and properties of ground states for the nonlinear Schr\"odinger energy $E_p(\cdot, \R^N \x \cG)$, defined in \eqref{schroedinger energy}, when $p \in (2, 2+4/(N+1)]$, and when they are independent on the variable of the metric graph $\cG$. This investigation is inspired by \cite{TerTzvVis}, which regards the NLS equation on product spaces $\R^N\x M^k$, where $M^k$ is a $k$-dimensional compact Riemannian manifold. In particular, in this section we aim at generalizing to this new setting \cite[Theorems 1.1 and 1.3]{TerTzvVis}, including moreover the critical case $p=2+4/(N+1)$. Results of a similar nature, for strips and for two dimensional open books, are obtained in \cite{fracturedstrip, LeCozShakarov}; we refer to the introduction for a detailed comparison.

\subsection{Existence of ground states in the subcritical case.}

\begin{proof}[Proof of Theorem \ref{caso sottocritico e critico} for $p<2_*$] 
Let $\{u_n\} \subseteq H^1_{\mu}(\R^N \x \cG)$ be a nonnegative $x$-radially decreasing minimizing sequence for $\mathcal{E}_p(\mu, \R^N \x \cG)$. We split the proof into a series of steps, as in \cite[Theorem 1.1]{TerTzvVis}. \\
\textit{First step: up to a subsequence, $u_n \rightharpoonup u$ weakly in $H^1(\R^N \x \cG)$}. By the Gagliardo-Nirenberg inequality \eqref{GN}
\[
E_p(u_n, \R^N \x \cG)\geq \frac{1}{2}\|\nabla_{x,y}u_n\|_2^2-\frac{C}{p}\|u_n\|_{H^1}^{\theta(p)}\mu^{\frac{p-\theta(p)}{2}},
\]
and, since $\theta(p)<2$, the sequence $\{u_n\}$ is bounded. \\
\textit{Second step: $u \not\equiv 0$.} Since $Z_{\mu/l(\cG),p}\in H^1_{\mu}(\R^N \x \cG)$,
\[
0>E_p\left(Z_{\frac{\mu}{l(\cG)},p}, \R^N \x \cG\right) \geq \cE_p(\mu, \R^N \x \cG)=\lim\limits_n \left(\frac{1}{2}\|\nabla_{x,y}u_n\|_2^2-\frac{1}{p}\|u_n\|_p^p\right)\geq -\liminf\limits_n\frac{1}{p}\|u_n\|_p^p,
\]
whence it follows that $\|u_n\|_p^p \geq C>0$ for every $n$. Thus, combining \eqref{GN localizzata} (see also Remark \ref{GN localizzata radiale}) with the boundedness of $\{u_n\}$ in $H^1$,
\[
C \leq \|u_n\|_{L^p(\R^N \x \cG)}^p \leq C\|u_n\|_{H^1(\R^N \x \cG)}^{\theta(p)} \|u_n\|_{L^2(\R^N \x \cG)}^{2-\theta(p)} \|u_n\|_{L^2(Q_0\x\cG)}^{p-2} \leq C \|u_n\|_{L^2(Q_0\x\cG)}^{p-2},
\]
which implies that $\|u_n\|_{L^2(Q_0\x\cG)}^2 \geq C>0$ for every $n$, where we recall that $Q_0 \subseteq \R^N$ denotes the cube in $\R^N$ centered at $0$. Since $u_n \rightarrow u$ in $L^2(Q_0 \x \cG)$, the previous lower bound ensures that $u \not\equiv 0$. \\
\textit{Third step: the function $\mu \rightarrow \cE_p(\mu, \R^N \x \cG)/\mu$ is strictly decreasing.} This fact is proved in \cite[Theorem 1.1, fifth step]{TerTzvVis} in the case when $\R^N \times \cG$ is replaced by the product of $\R^N$ with a compact Riemannian manifold. The arguments therein carry over verbatim to the present setting, and the proof is therefore omitted.\\
\textit{Fourth step: $\|u\|_2^2=\mu$.}
By the Brezis-Lieb Lemma,
\[
\|u_n\|_p^p = \|u_n-u\|_p^p+\|u\|_p^p + o(1).
\]
Moreover, since $\nabla_{x,y}u_n \rightharpoonup \nabla_{x,y}u$ weakly in $L^2(\R^N \times \cG)$,
\[
\|\nabla_{x,y}u_n\|_2^2 = \|\nabla_{x,y}u_n-\nabla_{x,y}u\|_2^2+\|\nabla_{x,y}u\|_2^2 + o(1).
\]
Therefore, 
\[
E_p(u_n, \R^N \x \cG)=E_p(u_n-u, \R^N \x \cG)+E_p(u, \R^N \x \cG)+o(1).
\]
Assume by contradiction that $\|u\|_2^2=\tau\in(0, \mu)$. Then, $\|u_n-u\|_2^2\rightarrow \mu-\tau$, and, by the third step,
\[
\begin{split}	
\cE_p(\mu, \R^N \x \cG)&=\lim\limits_n E_p(u_n, \R^N \x \cG)\geq\limsup\limits_n E_p(u_n-u, \R^N \x \cG)+E_p(u, \R^N \x \cG)\\
&\geq \cE_p(\mu-\tau, \R^N \x \cG)+\cE_p(\tau, \R^N \x \cG)>\frac{\mu-\tau}{\mu}\cE_p(\mu, \R^N \x \cG)+\frac{\tau}{\mu}\cE_p(\mu, \R^N \x \cG)\\
&=\cE_p(\mu, \R^N \x \cG),
\end{split}
\] 
a contradiction. Therefore, $\|u\|_2^2=\mu$. \\
\textit{Fifth step: $u$ is the desired ground state.} Since $\nabla_{x,y}u_n \rightharpoonup \nabla_{x,y}u$,
\[
\|\nabla_{x,y}u\|_2^2 \leq \liminf\limits_n \|\nabla_{x,y}u_n\|_2^2.
\]
Moreover, by the Gagliardo-Nirenberg inequality \eqref{GN},
\[
\|u_n-u\|_p^p \leq C \|u_n-u\|^{\theta(p)}_{H^1}\|u_n-u\|^{p-\theta(p)}_2 \leq C \|u_n-u\|^{p-\theta(p)}_2 \rightarrow 0.
\]
Therefore
\[
\cE_p(\mu, \R^N \x\cG) \leq E_p(u, \R^N \x\cG) \leq  \liminf\limits_n E_p(u_n, \R^N \x\cG)=\cE_p(\mu, \R^N \x\cG),
\]
and $u$ is the desired ground state. Moreover, since up to a subsequence $u_n \to u$ a.e. in $\R^N \times \cG$, $u$ is nonnegative and radially decreasing with respect to the $x$ variable. The positivity follows from the strong maximum principle.
\end{proof}

\subsection{Existence of ground states in the critical case.}

We start with some preliminary remarks.

\begin{remark}\label{g.s. in RN+1}
	Consider the nonlinear Schr\"odinger energy $E_p(u,\R^{N+1})$ on $\R^{N+1}$,
	and let $H^1_{\mu}(\R^{N+1})$ and $\cE_p(\mu, \R^{N+1})$ be defined in an obvious way. 
	When searching for mass-constrained minimizers in $H^1_{\mu}(\R^{N+1})$, it is well known that
	\[
	\cE_p(\mu,\R^{N+1}) = \begin{cases} \in \R_- & \text{if $p \in (2,2_*)$, for every $\mu$}\\ -\infty & \text{if $p >2_*$, for every $\mu$}, \end{cases} \quad \text{and} \quad 
	\cE_{2_*}\left(\mu, \R^{N+1}\right)=\begin{cases}
		0 \qquad &\text{ if } \mu \leq \mu_{\R^{N+1}, 2_*} \\
		-\infty \qquad &\text{ if } \mu > \mu_{\R^{N+1}, 2_*},
	\end{cases}
	\]
	where
	\[
	\mu_{\R^{N+1}, 2_*}:= \left(\frac{2_*}{2K_{\R^{N+1},2_*}}\right)^{\frac{N+1}{2}}
	\]
	and $K_{\R^{N+1},2_*}$ is defined in Remark \ref{def KRN+1}. Moreover, the mass-constrained ground state of $E_{2_*}(\cdot, \R^{N+1})$ is attained if and only if $\mu=\mu_{\R^{N+1}, 2_*}$. Similar considerations hold for $\R^{N+1}_+$, with $\mu_{\R^{N+1}_+, 2_*}= \mu_{\R^{N+1}, 2_*}/2$.
\end{remark}

In accordance to the previous remark, we define
\beq\label{def thr criti 926}
\mu_{\mathrm{ex}}^*
\bigl(\mathbb R^N\times \cG,2_*\bigr)
:=
\left(
\frac{2_*}
{2K^\infty}
\right)^{\frac{N+1}{2}},
\eeq
where $K^\infty:= K^\infty_{\mathbb R^N\times \cG,2_*}$ has been defined in Corollary \ref{cor: GN opt crit}. By Corollary \ref{cor: GN opt crit}, we have
\[
\mu_{\R^{N+1}_+,2_*} \le \mu_{\mathrm{ex}}^*
(\mathbb R^N\times \cG,2_*) \le \mu_{\R^{N+1},2_*},
\]
as stated in Theorem \ref{caso sottocritico e critico}. The upper bound is attained if \(\cG\) satisfies condition \((H)\), whereas the lower bound is attained if \(\cG\) contains a terminal edge.

Note that, by definition \eqref{def thr criti 926}, we have
\beq\label{1293}
\frac{2K^\infty}{2_*}\mu^{\frac{2}{N+1}}<1 \qquad \forall \mu \in (0,
\mu_{\mathrm{ex}}^*
(\mathbb R^N\times \cG,2_*)).
\eeq

\begin{proof}[Proof of Theorem \ref{caso sottocritico e critico} for $p=2_*$]\label{esistenza caso massa sottocritica} \emph{Step 1) Existence of ground states for $\mu<\mu_{\mathrm{ex}}^*
(\mathbb R^N\times \cG,2_*))$.} For any such $\mu$, estimate \eqref{1293} holds. Recalling that $K^\infty= \lim_{B \to \infty} K_{\mathbb R^N\times \cG,2_*,B}$,
we may then fix \(B \gg 1\) sufficiently large so that, setting $K:=K_{\mathbb R^N\times \cG,2_*,B}$, one has
\beq\label{1292}
\frac{2K}{2_*}\mu^{\frac{2}{N+1}}<1.
\eeq
Consider a nonnegative $x$-radially decreasing minimizing sequence $\{u_n\}\subseteq H^1_{\mu}(\R^N \x \cG)$ for $E_{2_*}(\cdot, \R^N \x \cG)$. Then
	\begin{equation}\label{stima E_p}
		\begin{split}
			E_{2_*}(u_n)&=\frac{1}{2}\|\nabla_{x,y}u_n\|_2^2+\frac{B}{2l(\cG)^2}\|u_n\|_2^2-\frac{1}{2_*}\|u_n\|_{2_*}^{2_*}-\frac{B}{2l(\cG)^2}\|u_n\|_2^2 \\
			&=\frac{1}{2}\left(\|\nabla_{x,y}u_n\|_2^2+ \frac{B}{l(\cG)^2} \mu\right)\left(1-\frac{2}{2_*}Q_{2_*, B}(u_n, \R^N \x \cG)\mu^{\frac{2}{N+1}}\right)-\frac{B\mu}{2l(\cG)^2}
			\\
			&\geq\frac{1}{2}\left(\|\nabla_{x,y}u_n\|_2^2+ \frac{B}{l(\cG)^2} \mu\right)\left(1-\frac{2}{2_*}K\mu^{\frac{2}{N+1}}\right)-\frac{B\mu}{2l(\cG)^2}.
		\end{split}
	\end{equation}
	By \eqref{1292}, this implies that $\cE_p(\mu, \R^N \x \cG)>-\infty$, and $\{u_n\}$ is bounded in $H^1\left(\R^N \x \cG\right)$. Up to extracting a subsequence, $\{u_n\}$ admits a weak limit $u \in H^1(\R^N \x \cG)$. At this point, the rest of the proof follows exactly as in the subcritical case. The second step yields $u \not\equiv 0$, whereas the third and fourth steps imply $\|u\|_2^2=\mu$ (clearly, the strict monotonicity of $\mathcal{E}_{2_*}(\mu,\R^N \x \cG)/\mu$ with respect to $\mu$ holds only below the critical threshold). Finally, by the fifth step, $u$ is the desired ground state.

\medskip

\noindent\emph{Step 2) Non-existence of ground states.} By Remark \ref{g.s. in RN+1}, since $\mu>\mu_{\R^{N+1}, 2_*}$ there exists a function $u \in H^1_{\mu}(\R^{N+1})$ with compact support such that $E_{2_*}(u, \R^{N+1})<0$. Moreover, for $L \rightarrow 0$, the scaling $u_L$ defined in \eqref{scaling} is such that:
	\begin{itemize}
		\item[($i$)] $\|u_L\|_2^2 = \mu$ for each $L$;
		\item[($ii$)] $E_{2_*}(u_L, \R^{N+1})=L^{-2}E_{2_*}(u, \R^{N+1}) \rightarrow -\infty$;
		\item[($iii$)] the support of $u_L$ gets smaller as $L \rightarrow 0$. 
	\end{itemize}
	Fix any edge $e$ in $\cG$. For $L$ sufficiently small, the support of $u_L$ can be embedded into $\R^N \x e \subseteq \R^N \x \cG$, and hence 
	\begin{equation}\label{passaggi cE -infty}
		\cE_{2_*}(\mu, \R^N \x \cG)\leq \liminf\limits_{L \to 0^+}E_{2_*}(u_L, \R^N \x \cG)=\liminf\limits_{L \to 0^+}L^{-2}E_{2_*}(u, \R^{N+1})\rightarrow -\infty,
	\end{equation}
	which is the desired result. \\
	Moreover, if $\cG$ contains a terminal edge and $\mu>\mu_{\R^{N+1}_+, 2_*}$, by Remark \ref{g.s. in RN+1} there exists a function $u \in H^1_{\mu}(\R^{N+1}_+)$ with negative energy and with compact support. Applying the scaling \eqref{scaling} with $L \rightarrow 0$, the support of $u_L$ is contained in $\R^N \x e$, where $e$ is the terminal edge of $\cG$. Hence $u_L$ can be regarded as an element in $H^1_\mu(\R^N \x \cG)$, and \eqref{passaggi cE -infty} still holds.
\end{proof}

\subsection{Dependence of ground states on the variable \texorpdfstring{$y$}{y}}

In this subsection we aim at generalizing \cite[Theorem 1.3]{TerTzvVis} to our setting, 
including the case $p=2+4/(N+1)$. We prove that for sufficiently small masses $\mu$ the ground 
state of $E_p(\cdot, \R^N \x \cG)$ in $H^1_{\mu}(\R^N \x \cG)$ is actually the semi-trivial 
solution $Z_{\mu/l(\cG),p}$ presented in Remark \ref{soluzione triviale}. 
To this end, we introduce the following auxiliary functional.
\begin{definition}
For every $u \in H^1(\R^N \x \cG)$, we define
\[
E_{p,\lambda}(u, \R^N \x \cG):=\frac{1}{2}\|\nabla_x u\|_2^2+\frac{\lambda}{2}\|\partial_y u\|_2^2-\frac{1}{p}\|u\|_p^p,
\]
with ground state energy level of unitary mass
\[
\mathcal{E}_{p, \lambda}(\R^N \x \cG):=\inf\limits_{u \in H^1_1(\R^N \times \cG)} E_{p,\lambda}(u, \R^N \x \cG).
\]
\end{definition}
\begin{remark}
Since the function $Z_{1/l(\cG),p} \in H^1_1(\R^N \x \cG)$, for every $\lambda>0$
\begin{equation}
\label{energie minore}
\cE_{p, \lambda}(\R^N \x \cG) \leq E_{p, \lambda}\left(Z_{\frac{1}{l(\cG)},p}, \R^N \x \cG\right) = \int_{\cG}E_p\left(Z_{\frac{1}{l(\cG)},p}, \R^N\right)\, dy=l(\cG)\cE_p\left(\frac{1}{l(\cG)}, \R^N\right).
\end{equation}
\end{remark}
Theorem \ref{thm 1.3} is obtained as a direct consequence of the following:

\begin{proposition}\label{thm 3.1}
Let $\cG$ be a compact metric graph, $\mu>0$, $N \in \mathbb{N}$, $p \in (2,2+4/(N+1)]$. Then, there exists $\lambda^*_{\R^N \x \cG, p}>0$ such that
\begin{equation}
	\begin{split}
		\label{tesi thm 3.1}
		\lambda>\lambda^*_{\R^N \x \cG, p} \implies &\text{each ground state of mass $1$ for $E_{p, \lambda}(\cdot, \R^N \x \cG)$ does not depend} \\
			&\text{on the $y$ variable,} \\
		\lambda<\lambda^*_{\R^N \x \cG, p}\implies &\text{ground states of mass $1$ for $E_{p, \lambda}(\cdot, \R^N \x \cG)$ must depend on the $y$ variable.}
	\end{split}
\end{equation}
\end{proposition}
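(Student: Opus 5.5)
The plan has three parts: a monotonicity argument that produces a single threshold $\lambda^*$, a proof that $\lambda^*>0$ (ground states depend on $y$ for small $\lambda$), and a proof that $\lambda^*<\infty$ (ground states are semi-trivial for large $\lambda$).

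\emph{Monotone structure and the threshold.} The map $\lambda\mapsto E_{p,\lambda}(u)$ is nondecreasing for each fixed $u$, and strictly increasing when $\partial_y u\not\equiv 0$. Hence $\lambda\mapsto\cE_{p,\lambda}(\R^N\x\cG)$ is nondecreasing and, by \eqref{energie minore}, bounded above by $l(\cG)\cE_p(1/l(\cG),\R^N)$. Moreover, every semi-trivial competitor has energy at least $l(\cG)\cE_p(1/l(\cG),\R^N)$. To see this, note that a $y$-independent function of mass $1$ is a function of $x$ of mass $1/l(\cG)$, and its energy is exactly $l(\cG)E_p(\cdot,\R^N)$. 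So ground states are semi-trivial if and only if $\cE_{p,\lambda}=l(\cG)\cE_p(1/l(\cG),\R^N)$, that is, if and only if the bound \eqref{energie minore} is an equality.

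I would then set $\lambda^*:=\inf\{\lambda:\cE_{p,\lambda}=l(\cG)\cE_p(1/l(\cG),\R^N)\}$. Monotonicity shows that equality holds for every $\lambda>\lambda^*$ and fails for every $\lambda<\lambda^*$. When equality holds, every ground state must have $\partial_y u\equiv0$. Indeed, if $u$ were a ground state at some $\lambda_0>\lambda^*$ with $\partial_yu\not\equiv0$, then for $\lambda^*<\lambda_1<\lambda_0$ we would get $\cE_{p,\lambda_1}\le E_{p,\lambda_1}(u)<E_{p,\lambda_0}(u)=\cE_{p,\lambda_0}$, contradicting the constancy of $\cE_{p,\lambda}$ above $\lambda^*$. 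When equality fails, the semi-trivial soliton is not a minimizer, so ground states (which exist by the argument of Theorem \ref{caso sottocritico e critico}, adapted to the weight $\lambda$) depend on $y$. It then remains to show $0<\lambda^*<\infty$.

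\emph{$\lambda^*>0$.} For small $\lambda$ I would use the rescaling $y\mapsto\sqrt\lambda\, y$, which maps $E_{p,\lambda}$ on $\R^N\x\cG$ to a multiple of $E_p$ on $\R^N\x\lambda^{-1/2}\cG$. I would then take a function concentrated near a point of a single edge, for instance $\phi(x)\psi(y)$ with $\psi$ supported in a short interval, and check that its energy is strictly below the semi-trivial level. This should follow from $p>2$: for fixed mass, concentrating in $y$ multiplies the $L^p$ term by a factor of order $\ell^{-(p-2)/2}$, while the kinetic cost $\lambda\|\partial_y\cdot\|_2^2$ tends to zero as $\lambda\to0$.

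\emph{$\lambda^*<\infty$.} This is the main obstacle. I would pass to the segment first. By Proposition \ref{propr y-riarrangiamento decr}, the $y$-decreasing rearrangement does not increase $E_{p,\lambda}$ and preserves mass, so $\cE_{p,\lambda}(\R^N\x\cG)\ge\cE_{p,\lambda}(\R^N\x\cG^*)$ with $\cG^*=[0,l(\cG)]$. Both problems share the same semi-trivial level. Hence it suffices to prove, on the segment, that ground states are $y$-independent for $\lambda$ large: then $\cE_{p,\lambda}(\cG^*)$ equals the semi-trivial level, which forces equality on $\cG$ as well.

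On the segment, I would proceed in three steps.
\begin{itemize}
\item[(a)] From $\cE_{p,\lambda}\le$ the semi-trivial level and the uniform $H^1$ bounds of Section \ref{sez sottocritico}, deduce $\lambda\|\partial_yu_\lambda\|_2^2\le C$, hence $\partial_yu_\lambda\to0$, and $u_\lambda\to Z_{1/l,p}$ by uniqueness and nondegeneracy of the soliton.
\item[(b)] Use the $W^{2,2}$ regularity of minimizers (Lemma \ref{u in W2,2}). With Neumann conditions, $w=\partial_yu_\lambda$ vanishes at the endpoints, so the Dirichlet Poincaré inequality gives $\|\partial_y w\|_2^2\ge(\pi/l)^2\|w\|_2^2$.
\item[(c)] Differentiate the Euler--Lagrange equation in $y$ and test it with $w$:
\[
\|\nabla_xw\|_2^2+\lambda\|\partial_yw\|_2^2+\omega\|w\|_2^2=(p-1)\int|u_\lambda|^{p-2}w^2.
\]
The right-hand side is at most $C\|w\|_2^2+o(1)\|w\|_{H^1}^2$, using the $L^\infty$ and $L^{N+1}$-type bounds that come from the convergence in (a) and from Gagliardo--Nirenberg. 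Combined with the Poincaré bound of (b), this forces $w\equiv0$ once $\lambda(\pi/l)^2>C$.
\end{itemize}
Equivalently, one can expand $u_\lambda$ in Neumann modes $\cos(k\pi y/l)$ (Lemma \ref{lemma 3.6}) and show that the nonzero modes vanish. The delicate point is controlling $\int|u_\lambda|^{p-2}w^2$ uniformly in $\lambda$; I expect it to need the uniform decay in $x$ inherited from radial monotonicity, together with elliptic bounds.
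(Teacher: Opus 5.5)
Your proposal is correct and follows the paper's proof in all essentials. The threshold comes from the monotonicity of $\lambda\mapsto E_{p,\lambda}(u)$; your energy-level definition of $\lambda^*$ is the characterization stated in the remark after the proof of Theorem~\ref{thm 1.3}, and it lets you pass from the segment to $\cG$ without the paper's level-set argument. Positivity of $\lambda^*$ comes from a $y$-dependent competitor whose energy as $\lambda\to0^+$ lies below $l(\cG)\cE_p(1/l(\cG),\R^N)$: the paper uses the fiberwise soliton $\psi(\cdot,y)=Z_{\rho(y)^2,p}$ with Jensen's inequality, and your concentrated product works just as well. Finiteness comes, as in the paper, from the segment argument ($W^{2,2}$ regularity, Dirichlet--Poincar\'e inequality for $\partial_y u$, linearized equation), transferred to $\cG$ by the $y$-decreasing rearrangement.

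The step you flag as delicate needs no decay or elliptic estimates. Set $Z=Z_{1/l(\cG),p}$ and split $|u_n|^{p-2}=Z^{p-2}+(|u_n|^{p-2}-Z^{p-2})$. The first part is bounded by $\|Z\|_\infty^{p-2}\|w_n\|_2^2$, which $\lambda_n\|\partial_y w_n\|_2^2$ absorbs via Poincar\'e. The second part is bounded by H\"older, Sobolev and the strong $H^1$ convergence $u_n\to Z$ from your step (a). Finally, $\omega_n\to\bar\omega>0$ controls the zero-order term.
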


The proof is divided into several intermediate steps, and initially follows the strategy developed in \cite{TerTzvVis}. 

	\begin{remark}\label{successione un}
	For every sufficiently large $\lambda$, there exists a mass-one ground state $u_\lambda$ of $E_{p, \lambda}(\cdot, \R^N \x \cG)$. Indeed, for every $\nu>0$, define the scaling $T_{\nu}: H^1_{\nu}(\R^N \x \cG) \longrightarrow H^1_1(\R^N \x \cG)$ by
	\begin{equation}
		\label{trasformazione thm 3.1}
			u \longmapsto T_{\nu}(u(x,y)):=\nu^{-\frac{2}{4-N(p-2)}}u\left(\nu^{-\frac{p-2}{4-N(p-2)}}x, y\right).
	\end{equation}
	A direct computation shows that, for every $u \in H^1_{\nu}(\R^N \x \cG)$, 
	\begin{equation}\label{proprieta trasformazione thm 3.1}
	E_{p,\lambda_\nu}(T_{\nu}(u), \R^N \x \cG)=\nu^{\frac{N(p-2)-2p}{4-N(p-2)}}E_p(u, \R^N \x \cG), \qquad \text{ where } \qquad 	\lambda_{\nu}:=\nu^{-\frac{2(p-2)}{4-N(p-2)}}.
	\end{equation}
Therefore, $u_{\nu} \in H^1_{\nu}(\R^N \x \cG)$ is a ground state of $E_p(\cdot, \R^N \x \cG)$ with mass $\nu$ if and only if $T_{\nu}(u_{\nu}) \in H^1_1(\R^N \x \cG)$ is a ground state of $E_{p, \lambda_{\nu}}(\cdot, \R^N \x \cG)$ with mass $1$. \\
Now fix $\lambda>0$ and set 
	\[
	\mu:=\lambda^{-\frac{4-N(p-2)}{2(p-2)}}.
	\] 
	Let $u_{\mu}$ be a ground state of $E_p(\cdot, \R^N \x \cG)$ with mass $\mu$. Its existence follows from Theorem~\ref{caso sottocritico e critico} (in the critical case $p=2+4/(N+1)$, we assume that $\mu$ is sufficiently small, or equivalently, that $\lambda$ is sufficiently large). Then, $u_{\lambda}:=T_{\mu}(u_{\mu}) \in H^1_1(\R^N \x \cG)$ is a mass-one ground state of $E_{p, \lambda}(\cdot, \R^N \x \cG)$. 
\end{remark}

Consider a sequence $\lambda_n \rightarrow +\infty$, and for each $n$ let $u_n:=u_{\lambda_n}$ be a ground state of mass $1$ for $E_{p, \lambda_n}(\cdot, \R^N \x \cG)$ (see Remark \ref{successione un}). We can assume that each $u_n$ is positive and radially decreasing with respect to the $x$ variable. We aim at proving that
\begin{equation}
\label{tesi lemma 3.6}
\text{ for $n$ sufficiently large, } \qquad \partial_y u_n \equiv 0.
\end{equation}
In \cite{TerTzvVis}, this fact is stated in Lemma 3.6, and the proof relies on Lemmas 3.2-3.5. In our setting, these four preliminary propositions can be proved through the same arguments; therefore, we collect these results into a single statement and omit the proof.

\begin{lemma}\label{lem: prop v_n}
In the previous setting, the following statements hold:
\begin{itemize}
\item[($i$)] $\mathcal{E}_{p,\lambda_n}(\R^N \x \cG) \to l(\cG)\mathcal{E}_p\left(1/l(\cG), \R^N\right)$ and $\lambda_n \|\partial_y u_n\|_2^2 \to 0$ as $n \to \infty$.
\item[($ii$)] Each $u_n$ solves 
\begin{equation}
\label{eq diff thm 3.1}
	-\Delta_x u_n - \lambda_n\partial_y^2 u_n + \omega_n u_n=|u_n|^{p-2}u_n \qquad \text{in }\R^N \times \cG,
\end{equation}
for some $\omega_n \in (0,+\infty)$.
\item[($iii$)] $\omega_n \to \overline{\omega} \in (0,+\infty)$, where the limit is characterized by the equation
\begin{equation*}
\label{NLS omega barrato}
-\Delta_x Z_{\frac{1}{l(\cG)},p} + \overline{\omega} Z_{\frac{1}{l(\cG)},p}=|Z_{\frac{1}{l(\cG)},p}|^{p-2}Z_{\frac{1}{l(\cG)},p}.
\end{equation*}
\item[($iv$)] $u_n \to Z_{1/l(\cG),p}$ strongly in $H^1(\R^N \times \cG)$.
\end{itemize}
\end{lemma}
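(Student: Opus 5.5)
We prove the four items in the order (i), (ii), (iii), (iv), following Lemmas 3.2--3.5 of \cite{TerTzvVis}.

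For (i), the upper bound $\limsup_n \mathcal{E}_{p,\lambda_n}(\R^N \x \cG)\le l(\cG)\mathcal{E}_p(1/l(\cG),\R^N)$ is \eqref{energie minore}. For the lower bound, write $u_n = \bar u_n + w_n$, where $\bar u_n(x):=l(\cG)^{-1}\int_\cG u_n(x,y)\,dy$ is the average over the graph.

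The Poincaré--Wirtinger inequality on the compact connected graph $\cG$, applied fiberwise (Definition \ref{def Soave}), gives
\[
\|w_n\|_2^2\le \alpha_1(\cG)^{-1}\|\partial_y u_n\|_2^2 .
\]
Uniform $H^1$ bounds come from \eqref{GN}: for $p<2_*$ this uses $\theta(p)<2$, and for $p=2_*$ it uses $\lambda_n\ge1$ together with the small-mass bound of \eqref{stima E_p}, since $\lambda_n$ large corresponds to small $\mu$.

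Since the energy $E_{p,\lambda_n}(u_n)$ is bounded, and $\lambda_n\|\partial_y u_n\|_2^2$ is controlled by $E_{p,\lambda_n}(u_n)+\|u_n\|_p^p/p$, we get $\|\partial_y u_n\|_2^2=O(\lambda_n^{-1})$. Hence $w_n\to0$ in $L^2$, and by \eqref{GN} also in $L^p$.

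Then
\[
E_{p,\lambda_n}(u_n)\ge l(\cG)E_p(\bar u_n,\R^N)+\tfrac{\lambda_n}{2}\|\partial_y u_n\|_2^2+o(1).
\]
Here we use Jensen, $\|\nabla_x u_n\|_2^2\ge l(\cG)\|\nabla\bar u_n\|^2_{L^2(\R^N)}$, and the $L^p$ continuity above. Since $\|\bar u_n\|_{L^2(\R^N)}^2\to 1/l(\cG)$, scaling \eqref{scalamento energia} gives the lower bound. Both claims of (i) then follow.

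For (ii), the Euler--Lagrange equation is \eqref{schroedinger equation} with $\partial_y^2$ weighted by $\lambda_n$. Testing it with $u_n$ gives
\[
\omega_n=\|u_n\|_p^p-\|\nabla_x u_n\|_2^2-\lambda_n\|\partial_y u_n\|_2^2 .
\]
Positivity of $\omega_n$ follows from $\mathcal{E}_{p,\lambda_n}<0$ and the fact that $\|\nabla u\|^2<\|u\|_p^p$ when $E<0$, with $\frac12>\frac1p$.

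For (iv), take the $x$-radially decreasing $u_n$. Any weak limit $u$ has $\partial_y u=0$, so $u=Z(x)$. The identity in (i) shows that $\bar u_n$ is a minimizing sequence for $E_p(\cdot,\R^N)$ at mass $1/l(\cG)$, consisting of radial functions. The compactness of the radial embedding for $p<2^*$, together with uniqueness (Remark \ref{soluzione triviale}), gives $\bar u_n\to Z_{1/l(\cG),p}$ in $H^1(\R^N)$. Combined with $w_n\to0$ in $H^1$, this yields strong $H^1$ convergence of $u_n$; the $\nabla_x w_n$ part follows from convergence of energies via Jensen equality defects. Then (iii) follows by passing to the limit in the expression for $\omega_n$: the term $\lambda_n\|\partial_yu_n\|^2\to0$ by (i), and $\bar\omega>0$ is the multiplier of $Z_{1/l(\cG),p}$.

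The main obstacle is the critical case $p=2_*$. There the uniform $H^1$ bound must be extracted from \eqref{stima E_p} using $\lambda_n\ge1$, and one must check that the estimate is not spoiled by the effective mass scaling.
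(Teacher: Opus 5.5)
Your route is the one the paper intends: it omits the proof and defers to Lemmas 3.2--3.5 of \cite{TerTzvVis}. For $p<2_*$ your argument is complete (the average/zero-mean splitting, Poincar\'e--Wirtinger, the Jensen lower bound, and the computation of $\omega_n$ all work). The genuine gap is the step you flag but leave open: the uniform $H^1$ bound at $p=2_*$. The justification you give for it fails.

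At $p=2_*$ one has $T_\nu u(x,y)=\nu^{-\frac{N+1}{2}}u(\nu^{-1}x,y)$ and $\lambda_\nu=\nu^{-2}$. So $T_\nu$ multiplies $\|\nabla_x\cdot\|_2^2$ by $\nu^{-3}$ and $\|\partial_y\cdot\|_2^2$ by $\nu^{-1}$. Estimate \eqref{stima E_p} at the small original mass $\mu_n=\lambda_n^{-1/2}$ only gives $\|\nabla_{x,y}u_{\mu_n}\|_2^2\lesssim\mu_n^{1+\frac{2}{N+1}}$. Hence $\|\partial_y u_n\|_2^2\lesssim\lambda_n^{-\frac{1}{N+1}}\to0$, but only $\|\nabla_x u_n\|_2^2\lesssim\lambda_n^{\frac{N}{N+1}}$, which diverges. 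The $(N+1)$-dimensional critical inequality does not see the true $N$-dimensional size $\mu^3$ of $\|\nabla_x u_\mu\|_2^2$. If you instead use $\lambda_n\ge1$ to write $E_{p,\lambda_n}(u_n)\ge E_{2_*}(u_n,\R^N\x\cG)$ and apply \eqref{stima E_p} to $u_n$ itself, the mass is $1$. You would then need $\frac{2K}{2_*}<1$, i.e.\ that mass $1$ lies below the critical threshold. That has nothing to do with $\lambda_n$ being large, and the paper does not establish it.

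You can close the gap with tools you already use. Fix $\lambda_0$ with $\mu_0=\lambda_0^{-1/2}<\mu_{\mathrm{ex}}^*(\R^N\x\cG,2_*)$, so that $\mathcal{E}_{p,\lambda_0}(\R^N\x\cG)=\mu_0^{-3}\mathcal{E}_{2_*}(\mu_0,\R^N\x\cG)>-\infty$. Monotonicity in $\lambda$ then gives
\[
0>E_{p,\lambda_n}(u_n)=E_{p,\lambda_0}(u_n)+\frac{\lambda_n-\lambda_0}{2}\|\partial_y u_n\|_2^2\ge \mathcal{E}_{p,\lambda_0}(\R^N\x\cG)+\frac{\lambda_n-\lambda_0}{2}\|\partial_y u_n\|_2^2 .
\]
So $\lambda_n\|\partial_y u_n\|_2^2$ is bounded, with no information on $\nabla_x u_n$ needed.

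Next, split $\|u_n\|_p^p\le 2^{p-1}\bigl(l(\cG)\|\bar u_n\|_{L^p(\R^N)}^p+\|w_n\|_p^p\bigr)$.
\begin{itemize}
\item The $N$-dimensional Gagliardo--Nirenberg inequality gives $l(\cG)\|\bar u_n\|_{L^p(\R^N)}^p\le C\|\nabla_x u_n\|_2^{2N/(N+1)}$, and the exponent $2N/(N+1)$ is strictly less than $2$.
\item \eqref{GN} with $\theta(2_*)=2$, combined with Poincar\'e--Wirtinger, gives $\|w_n\|_p^p\le C\bigl(\|\nabla_x u_n\|_2^2+\|\partial_y u_n\|_2^2\bigr)\|\partial_y u_n\|_2^{4/(N+1)}$. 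This term is absorbed into $\frac12\|\nabla_x u_n\|_2^2$ because $\|\partial_y u_n\|_2\to0$.
\end{itemize}
This yields $\sup_n\|\nabla_x u_n\|_2<\infty$, and the rest of your proof then runs as written.

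A minor point for (iv): when $N=1$ the compactness of the radial embedding fails, since Strauss' lemma needs $N\ge2$. Use instead that $\bar u_n$ is radially nonincreasing. Then $|\bar u_n(x)|\le C|x|^{-N/2}\|\bar u_n\|_{L^2(\R^N)}$ gives uniform tail control and hence $L^p$ compactness for every $N\ge1$.
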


We recall that $Z_{1/l(\cG), p}$ denotes the semi-trivial ground state of mass $1/l(\cG)$ for $E_p(\cdot, \R^N)$, regarded as a function in $\R^N \times \cG$ (see Remark \ref{soluzione triviale}). From now on, the proof differs substantially from the one in \cite{TerTzvVis}. The idea is to show that not only $\|\partial_y u_n\|_2^2 \to 0$ as $n \to \infty$, but the equality holds for sufficiently large $n$. To this end, we first prove that this property holds in the particular case when $\cG=[0,L]$ is an interval. Afterwards, we cover the general case with a rearrangement argument. For the first step to succeed, we need the following preliminary result.

\begin{lem}\label{u in W2,2}
	Under the hypotheses of Proposition \ref{thm 3.1}, let $\cG=[0,L]$, with $L> 0$. Then $u_n \in W^{2,2}(\R^N \x [0,L])$ for every $n$.
\end{lem}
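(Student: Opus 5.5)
The plan is to reduce the problem to standard elliptic regularity on the whole space $\R^{N+1}$ by even reflection in $y$. Since $\cG=[0,L]$ has only the two endpoints as (terminal) vertices, the Kirchhoff condition in \eqref{eq diff thm 3.1} is the homogeneous Neumann condition $\partial_y u_n(x,0)=\partial_y u_n(x,L)=0$, understood weakly: $u_n\in H^1(\R^N\x[0,L])$ satisfies
\[
\int \nabla_x u_n\cdot\nabla_x\varphi+\lambda_n\partial_y u_n\,\partial_y\varphi+\omega_n u_n\varphi=\int |u_n|^{p-2}u_n\varphi\qquad \forall\varphi\in H^1(\R^N\x[0,L]).
\]
First I would rescale $y\mapsto y/\sqrt{\lambda_n}$ to turn the operator into $-\Delta_{x,y}+\omega_n$ on $\R^N\x[0,L\sqrt{\lambda_n}]$, which does not affect $W^{2,2}$ membership. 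Then I would extend $u_n$ evenly across $y=0$ and $y=L$ and periodically with period $2L$ in $y$, obtaining a function $\tilde u_n$ on $\R^{N+1}$ that is $H^1_{loc}$, bounded in $H^1$ of every slab $\R^N\x[a,a+2L]$. The usual reflection argument (split a test function on $\R^{N+1}$ into pieces over the cells and fold them back, using evenness) shows that $\tilde u_n$ is a weak solution of $-\Delta\tilde u_n+\omega_n\tilde u_n=|\tilde u_n|^{p-2}\tilde u_n$ in all of $\R^{N+1}$. Equivalently, $\tilde u_n$ solves the equation on $\R^N\x S_{L/\pi}$, on which Lemma \ref{lem 7.10 cerchio} and Fourier expansions in $y$ are available.

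Next I would bootstrap. Put $f_n:=|\tilde u_n|^{p-2}\tilde u_n-\omega_n\tilde u_n$. Since $p\le 2_*<2^*=2(N+1)/(N-1)$, Sobolev gives $\tilde u_n\in L^{2^*}$ on each cell, so $f_n\in L^{q}_{loc}$ with $q=2^*/(p-1)>$ some exponent $>1$. Interior Calder\'on--Zygmund estimates on unit balls, uniformly translated in $x$ and $y$, then give $\tilde u_n\in W^{2,q}_{loc}$ with bounds uniform in the center. Iterate with Sobolev embedding finitely many times until $\tilde u_n\in L^\infty$, which is possible because the nonlinearity is subcritical. At that point $|u_n|^{p-2}u_n\in L^2(\R^N\x[0,L])$, since $|u_n|^{p-1}\le\|u_n\|_\infty^{p-2}|u_n|$.

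For the final step I want a global rather than merely local estimate. The cleanest route is to use the Fourier/cosine expansion in $y$. Write $u_n(x,y)=\sum_k c_k(x)\cos(k\pi y/L)$. The equation decouples into $(-\Delta_x+\omega_n+\lambda_n k^2\pi^2/L^2)c_k=g_k$, where $g_k$ are the cosine coefficients of $g_n:=|u_n|^{p-2}u_n\in L^2$. By Plancherel in $x$, using $(|\xi|^2+\omega_n+\lambda_n m^2)^{-1}(|\xi|^2+m^2)\le C$ with $\omega_n>0$, we get
\[
\|D^2_{x,y}u_n\|_{L^2}\lesssim\|g_n\|_{L^2}.
\]
Hence $u_n\in W^{2,2}(\R^N\x[0,L])$.

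The main obstacle is the justification that the even extension is a genuine weak solution, namely that no boundary term is produced. This amounts to checking that the natural boundary condition encoded in the weak formulation on $H^1$ is exactly the Neumann condition, which it is, since the test functions are unrestricted at the vertices. The $L^\infty$ bootstrap must also be handled uniformly on the unbounded domain. If the bootstrap becomes delicate, the fallback is to avoid $L^\infty$ entirely and use difference quotients in $x$ and $y$ on the reflected problem to get $\nabla u_n\in H^1$ directly, with the nonlinear term controlled by Gagliardo--Nirenberg.
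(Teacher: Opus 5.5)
Your proposal is correct and follows essentially the paper's route. The paper also takes $u_n\in L^\infty$ from standard regularity theory, so that the right-hand side lies in $L^2$, and then expands in the Neumann cosine basis in $y$, applying the Fourier transform in $x$ and Plancherel/Parseval to bound every second derivative by the $L^2$ norm of the right-hand side. Your even reflection and Calder\'on--Zygmund bootstrap just spell out the $L^\infty$ step the paper calls standard; that step is genuinely needed for large $N$, where $H^1$ alone does not put $|u_n|^{p-1}$ in $L^2$.
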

\begin{proof}
Since we fix the index $n$ throughout the proof, we omit it to simplify the notation. Moreover, without loss of generality, we assume that $L=1$. By standard regularity theory, $u \in L^\infty(\R^N \times [0,1])$ and hence
\[
-\Delta_x u -\lambda \pa_{yy} u = f \in L^2(\R^N \times [0,1]).
\]
Let $\{e_k: k \ge 0\}$ be the orthonormal basis of $L^2([0,1])$ consisting of eigenfunctions for the homogeneous Neumann Laplacian, that is $e_0 \equiv 1$ and $e_k(y)=\sqrt{2}\cos(k\pi y)$ for $k \geq 1$, with eigenvalues $\sigma_k:=(k\pi)^2$, and expand
\[
u(x,y) = \sum_{k=0}^\infty u_k(x) e_k(y), \qquad f(x,y) = \sum_{k=0}^\infty f_k(x) e_k(y).
\]
Taking $\varphi(x,y)=\psi(x)e_k(y)$, with $\psi \in C^\infty_c(\R^N)$, as test function in the equation for $u$, and integrating by parts in $y$ (recall that $e_k'(0)=e_k'(1)=0$), we obtain
\[
\int_{\R^N} \nabla u_k \cdot \nabla \psi\,dx +\lambda \sigma_k \int_{\R^N} u_k \psi\,dx = \int_{\R^N} f_k \psi\,dx,
\]
that is, each Fourier coefficient $u_k \in H^1(\R^N)$ solves a linear equation with right hand side $f_k$. Applying the Fourier transform in the $x$ variable,
\[
(|\xi|^2+\lambda \sigma_k)\mathcal{F}u_k(\xi)= \mathcal{F} f_k(\xi) \qquad \text{for a.e. } \xi \in \R^N,
\]
and therefore, for every $i,j$ and every $k$,
\[
\left|\xi_i\xi_j\mathcal{F} u_k(\xi)\right| \le |\mathcal{F} f_k(\xi)|, \qquad
\sigma_k\left|\mathcal{F} u_k(\xi)\right| \le \frac1\lambda\left|\mathcal{F} f_k(\xi)\right|, \qquad
|\xi_i|\sqrt{\sigma_k}\left|\mathcal{F} u_k(\xi)\right| \le \frac{1}{2\sqrt{\lambda}}\left|\mathcal{F} f_k(\xi)\right|,
\]
the three inequalities following respectively from $|\xi|^2 \leq |\xi|^2+\lambda\sigma_k$, from $\lambda\sigma_k \leq |\xi|^2+\lambda\sigma_k$, and from $2\sqrt{\lambda}\,|\xi|\sqrt{\sigma_k}\leq |\xi|^2+\lambda\sigma_k$. By Plancherel in $x$ and Parseval in $y$ we conclude that
\[
\|\partial_{x_ix_j}u\|_{L^2(\R^N \x \cG)} \leq \|f\|_{L^2(\R^N \x \cG)}, \qquad
\|\partial_{yy}u\|_{L^2(\R^N \x \cG)} \leq \frac{1}{\lambda}\|f\|_{L^2(\R^N \x \cG)},
\]
and
\[ 
\|\partial_{x_iy}u\|_{L^2(\R^N \x \cG)} \leq \frac{1}{2\sqrt{\lambda}}\|f\|_{L^2(\R^N \x \cG)},
\]
so that all second-order weak derivatives of $u$ belong to $L^2(\R^N \x \cG)$. Since $u \in H^1(\R^N \x \cG)$ by assumption, the thesis follows.
\end{proof}

\begin{lem}
	\label{lemma 3.6}
	Let $\cG$ be a compact metric graph, $N\geq 1$, $p \in (2,2+4/(N+1)]$, and let $u_n$ be defined as above. Then
	\[
\partial_y u_n \equiv 0 \qquad \text{in $\R^N \times \cG$ for sufficiently large $n$}.
\]
\end{lem}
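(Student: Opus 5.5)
Following the strategy announced before the lemma, I would first treat the case $\cG=[0,L]$, say $L=1$, and then pass to a general compact graph via the $y$-decreasing rearrangement.

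\emph{The segment case.} By Lemma \ref{u in W2,2}, $u_n\in W^{2,2}(\R^N\x[0,1])$, so $w_n:=\partial_y u_n\in H^1(\R^N\x[0,1])$. By the Neumann (Kirchhoff) condition, $w_n$ vanishes on $\R^N\x\{0,1\}$. Differentiating \eqref{eq diff thm 3.1} in $y$, in the weak sense justified by the $W^{2,2}$ regularity, gives
\[
-\Delta_x w_n-\lambda_n\partial_y^2 w_n+\omega_n w_n=(p-1)|u_n|^{p-2}w_n ,\qquad w_n=0 \text{ on } \R^N\x\{0,1\}.
\]
Testing this with $w_n$ itself yields
\[
\|\nabla_x w_n\|_2^2+\lambda_n\|\partial_y w_n\|_2^2+\omega_n\|w_n\|_2^2=(p-1)\int |u_n|^{p-2}w_n^2 .
\]
Since $w_n$ has Dirichlet data in $y$, the Poincar\'e inequality gives $\|\partial_y w_n\|_2^2\ge \pi^2\|w_n\|_2^2$. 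The right-hand side is at most $(p-1)\|u_n\|_\infty^{p-2}\|w_n\|_2^2$. Hence
\[
\lambda_n\pi^2\|w_n\|_2^2\le (p-1)\|u_n\|_\infty^{p-2}\|w_n\|_2^2 ,
\]
which forces $w_n\equiv 0$ once $\lambda_n\pi^2>(p-1)\sup_n\|u_n\|_\infty^{p-2}$.

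\emph{Main obstacle.} The key point is therefore a bound on $\|u_n\|_\infty$ that is uniform in $n$. I would try to get it by elliptic regularity. Rescaling $y\mapsto \sqrt{\lambda_n}\,y$ turns the operator into the standard Laplacian on $\R^N\x[0,\sqrt{\lambda_n}]$, reflected evenly across the Neumann boundary. A Moser iteration, or a Brezis--Kato bootstrap on unit cubes, then gives $\|u_n\|_\infty\le C(\|u_n\|_{H^1})$ with $C$ independent of the thickness, using $\omega_n\to\overline\omega>0$ and $u_n\to Z_{1/l(\cG),p}$ in $H^1$ (Lemma \ref{lem: prop v_n}). The exponent is subcritical ($p\le 2_*<2^*$), so the bootstrap closes. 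The rescaling only improves the $L^2$ norms per unit cube, so the constants stay uniform.

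If that is too delicate, an alternative avoids $L^\infty$: expand $u_n=\sum_k u_{n,k}e_k$ and bound $(p-1)\int|u_n|^{p-2}w_n^2$ by H\"older and Gagliardo--Nirenberg, using $\|u_n\|_{p}$ bounded and absorbing into $\|\nabla w_n\|_2^2+\lambda_n\|\partial_yw_n\|_2^2$. This should work for $p-2$ small relative to dimension, but the $L^\infty$ route is cleaner.

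\emph{General graph.} Let $u_n$ be a ground state on $\R^N\x\cG$. By Proposition \ref{propr y-riarrangiamento decr}, $u_n^{*y}\in H^1_1(\R^N\x[0,l(\cG)])$ preserves the $L^p$ norms and does not increase either gradient term. It therefore satisfies $E_{p,\lambda_n}(u_n^{*y},\R^N\x[0,l(\cG)])\le \cE_{p,\lambda_n}(\R^N\x\cG)$. Conversely, $y$-independent functions, and in particular $Z_{1/l(\cG),p}$, have the same energy on both domains.

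I would show that, for large $n$, $\cE_{p,\lambda_n}$ on the segment equals the semi-trivial level $l(\cG)\cE_p(1/l(\cG),\R^N)$. This follows from the segment case applied to ground states of the segment problem: they are $y$-independent, hence minimize the $\R^N$ problem. Consequently
\[
l(\cG)\cE_p(1/l(\cG),\R^N)\le E_{p,\lambda_n}(u_n^{*y})\le \cE_{p,\lambda_n}(\R^N\x\cG)\le l(\cG)\cE_p(1/l(\cG),\R^N)
\]
by \eqref{energie minore}, so all the inequalities are equalities. Thus $u_n^{*y}$ is a ground state on the segment, hence $y$-independent, and the $\partial_y$-Dirichlet terms coincide: $\|\partial_y u_n\|_2=\|\partial_y u_n^{*y}\|_2=0$. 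Therefore $\partial_y u_n\equiv0$.

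The one step still to check here is that the segment problem uses the same sequence $\lambda_n\to\infty$ and that the threshold on $\lambda$ obtained in the segment case applies to it; this is automatic, since that threshold depends only on the segment length $l(\cG)$.
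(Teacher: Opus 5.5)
\paragraph{The segment step and the uniform bound.} Your overall structure is the paper's: on a segment you use $w_n=\partial_y u_n$ with Dirichlet data and Poincar\'e, and you transfer to $\cG$ by the $y$-decreasing rearrangement. The gap is in the segment step. As you set it up, that step rests entirely on a bound $\sup_n\|u_n\|_\infty<\infty$, and you assert this bound without proving it. Your sketch of it does not close in the critical case $p=2_*$, which the lemma covers.

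After the rescaling $y\mapsto\sqrt{\lambda_n}\,y$ you do get uniform $L^2$ bounds on unit cubes, since these are controlled by $\sup_y\|u_n(\cdot,y)\|_{L^2(\R^N)}^2\le C(\|u_n\|_2^2+\|u_n\|_2\|\partial_yu_n\|_2)$. You do not get uniform $H^1$ bounds. The $x$-gradient energy of a rescaled unit cube equals $\sqrt{\lambda_n}$ times the integral of $|\nabla_xu_n|^2$ over a slab of width $\lambda_n^{-1/2}$, and $\|u_n\|_{H^1}$ alone does not control this.

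With only $L^2$ control per cube, the potential $|u_n|^{p-2}$ lies in $L^{2/(p-2)}$.
\begin{itemize}
\item For $p<2_*$ one has $2/(p-2)>(N+1)/2$, and your Moser argument can indeed be completed.
\item At $p=2_*$ one has $2/(p-2)=(N+1)/2$ exactly. This is the borderline case: Moser or De Giorgi iteration gives no bound in terms of norms alone, and Brezis--Kato needs an equi-integrability you have not shown.
\end{itemize}
So ``$p\le 2_*<2^*$, hence the bootstrap closes'' presupposes exactly the missing uniform local $H^1$ (or $L^{2+\varepsilon}$) estimate on the rescaled functions. That estimate could probably be supplied by a Caccioppoli estimate on strips combined with the Gagliardo--Nirenberg inequality in $x$, using $p<2+4/N$, but as written the main step is open. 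Your fallback via H\"older and Gagliardo--Nirenberg has no source of smallness, so the absorption you describe does not work either.

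\paragraph{The missing idea.} No uniform $L^\infty$ bound is needed; the smallness comes from Lemma \ref{lem: prop v_n}(iv). The paper splits $|u_n|^{p-2}=Z^{p-2}+(|u_n|^{p-2}-Z^{p-2})$, with $Z=Z_{1/l(\cG),p}$ a fixed bounded function.
\begin{itemize}
\item The first piece contributes at most $(p-1)\|Z\|_\infty^{p-2}\|w_n\|_2^2$. This is beaten by $(\lambda_n-1)\|\partial_yw_n\|_2^2\ge C(\lambda_n-1)\|w_n\|_2^2$.
\item The second piece is $o(1)\|w_n\|_{H^1}^2$. This follows from H\"older with exponents $\frac{2(N+1)}{(N-1)(p-2)}$ and $\frac{2(N+1)}{4N-p(N-1)}$, the Sobolev inequality, and $u_n\to Z$ in $H^1$.
\end{itemize}
The second piece is then absorbed into $\|\nabla_xw_n\|_2^2+\|\partial_yw_n\|_2^2+\overline{\omega}\|w_n\|_2^2\ge C\|w_n\|_{H^1}^2$, using $\omega_n\to\overline{\omega}>0$. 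For this you must keep the terms $\|\nabla_xw_n\|_2^2$ and $\omega_n\|w_n\|_2^2$ that you discarded.

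\paragraph{The general-graph step.} This part of your proof is correct. Your ending is in fact simpler than the paper's superlevel-set argument: equality of the energies, together with the two separate P\'olya--Szeg\H{o} inequalities of Proposition \ref{propr y-riarrangiamento decr}, forces $\|\partial_yu_n\|_2=\|\partial_yu_n^{*y}\|_2=0$.
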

\begin{proof}
We split the proof into three steps. The first two steps are devoted to proving the theorem for the case $\cG=[0,L]$, while the last step deals with general compact metric graphs $\cG$. \\
\textit{First step: definition and basic properties of $w_n$ when $\mathcal{G}=[0,L]$.} Let $w_n(x,y):=\partial_y u_n(x,y)$. By Lemma \ref{u in W2,2}, we have $w_n \in H^1(\R^N \x [0,L])$. Moreover, since $u_n$ satisfies the homogeneous Neumann condition at the vertices of the segment $[0,L]$ (as a consequence of the Kirchhoff condition), $w_n$ satisfies the homogeneous Dirichlet condition
\[
w(x,0)=w(x,L)=0 \qquad \forall x \in \R^N.
\]
Then, by the Poincar\'e inequality on the segment
\begin{equation*}
	\exists C>0 \text{ such that}\quad \|w_n(x, \cdot)\|_{L^2([0,L])}\leq C \|\partial_y w_n(x, \cdot)\|_{L^2([0,L])} \qquad \forall x \in \R^N, 
\end{equation*}
and, integrating over $x \in \R^N$, we deduce that 
\begin{equation}
	\label{poincare inequality}
	\exists C>0 \text{ such that } \|w_n\|_2\leq C\|\partial_y w_n\|_2.
\end{equation}
\textit{Second step: $w_n \equiv 0$ for sufficiently large $n$ when $\cG=[0,L]$.} Differentiating \eqref{eq diff thm 3.1} with respect to $y$, we deduce that $w_n$ is a weak solution of 
\[
	-\Delta_x w_n- \lambda_n \partial_y^2 w_n+\omega_n w_n-(p-1)|u_n|^{p-2}w_n=0.
\]
 Multiplying by $w_n$ and integrating over $\R^N \x [0,L]$, we obtain:
	\begin{equation*}
		\begin{split}
			\|\nabla_x w_n\|_2^2+\lambda_n \|\partial_y w_n\|_2^2+\omega_n\|w_n\|_2^2-(p-1)\int_{\R^N}\int_{[0,L]}|u_n|^{p-2}w_n^2\,dy\,dx =0,
		\end{split}
	\end{equation*}
where we used the homogeneous Dirichlet boundary condition and the fact that $w_n \in H^1(\R^N \times [0,L])$, by Lemma \ref{u in W2,2}. By the Poincar\'e inequality \eqref{poincare inequality}, 
\begin{equation*}
	\begin{split}
		0&=\|\nabla_x w_n\|_2^2+\|\partial_y w_n\|_2^2+\overline{\omega}\|w_n\|_2^2+(\lambda_n-1) \|\partial_y w_n\|_2^2+(\omega_n-\overline{\omega})\|w_n\|_2^2 \\
		&\qquad -(p-1)\int_{\R^N}\int_{[0,L]}|Z_{\frac{1}{l(\cG)},p}|^{p-2}w_n^2\,dy\,dx+(p-1)\int_{\R^N}\int_{[0,L]}\left(|Z_{\frac{1}{l(\cG)},p}|^{p-2}-|u_n|^{p-2}\right)w_n^2\,dy\,dx \\
		&\geq C\|w_n\|_{H^1}^2+C(\lambda_n-1)\|w_n\|_2^2+o(1)\|w_n\|_2^2-(p-1)\int_{\R^N}\int_{[0,L]}|Z_{\frac{1}{l(\cG)},p}|^{p-2}w_n^2\,dy\,dx\\
		&\qquad +(p-1)\int_{\R^N}\int_{[0,L]}\left(|Z_{\frac{1}{l(\cG)},p}|^{p-2}-|u_n|^{p-2}\right)w_n^2\,dy\,dx,
	\end{split}
\end{equation*}
	We estimate the last two integrals. For the first one, we proceed as follows:
	\[
	\int_{\R^N}\int_{[0,L]}|Z_{\frac{1}{l(\cG)},p}|^{p-2}w_n^2\,dy\,dx \leq \|Z_{\frac{1}{l(\cG)},p}\|_{\infty}^{p-2}\|w_n\|_2^2 \leq C\|w_n\|_2^2.
	\] 
	The second integral can be treated through the H\"older inequality, with conjugate exponents $\frac{2(N+1)}{(N-1)(p-2)}$ and $\frac{2(N+1)}{4N-p(N-1)}$ (if $N=1$, we consider $+\infty$ and $1$): from this, we deduce that
	\begin{equation*}
		\begin{split}
			\int_{\R^N}\int_{[0,L]}\left(|Z_{\frac{1}{l(\cG)},p}|^{p-2}-|u_n|^{p-2}\right)w_n^2\,dy\,dx &\leq \left\| |Z_{\frac{1}{l(\cG)},p}|^{p-2}-|u_n|^{p-2}\right\|_{\frac{2(N+1)}{(N-1)(p-2)}}\|w_n\|_{\frac{4(N+1)}{4N-p(N-1)}}^2 \\
			&= o(1)\|w_n\|_{\frac{4(N+1)}{4N-p(N-1)}}^2 =o(1) \|w_n\|_{H^1}^2, 
		\end{split}
	\end{equation*}
	where we used the Sobolev inequality, the fact that $u_n \to Z_{1/l(\cG),p}$ strongly in $H^1(\R^N \times \cG)$, and the fact that $2<\frac{4(N+1)}{4N-p(N-1)}<\frac{2(N+1)}{N-1}$. Combining the previous estimates, we obtain:
	\[
	0 \geq \|w_n\|_{H^1}^2\left(C+o(1)\right)+\|w_n\|_2^2\left(C(\lambda_n-1)-C\right) \geq \|w_n\|_2^2\left(C(\lambda_n-1)-C\right).
	\]
	Since $\lambda_n \rightarrow +\infty$, $\|w_n\|_2^2$ must be equal to zero, for $n$ sufficiently large. Therefore, \eqref{tesi lemma 3.6} is proved for $\cG=[0,L]$. 
	Moreover, as byproduct, we also infer that
	\begin{equation}
		\label{energie maggiore}
		\cE_{p, \lambda_n}\left(\R^N \x [0, L]\right) = L\cE_p\left(\frac{1}{L}, \R^N\right) \quad \text{for sufficiently large $n$}. 
	\end{equation}
\textit{Third step: conclusion of the proof.} Now, let $\cG$ be a general compact metric graph. By \eqref{energie minore}, the properties of rearrangements, and recalling that $u_n$ is the ground state of mass $1$ for $E_{p, \lambda_n}$, 
\[
l(\cG)\cE_p\left(\frac{1}{l(\cG)}, \R^N\right) \ge \cE_{p, \lambda_n}(\R^N \x \cG) = E_{p, \lambda_n}(u_n, \R^N \x \cG) \geq E_{p, \lambda_n}(u_n^{*y}, \R^N \x [0,l(\cG)]).
\]
On the other hand, since $u_n^{*y} \in H^1_1(\R^N \x [0,l(\cG)])$, if $n$ is so large that \eqref{energie maggiore} holds, we find 
\[
E_{p, \lambda_n}(u_n^{*y}, \R^N \x [0,l(\cG)]) \geq \cE_{p, \lambda_n}(\R^N \x [0, l(\cG)]) =  l(\cG)\cE_p\left(\frac{1}{l(\cG)}, \R^N\right).
	\]
	Therefore, for any such $n$ all the inequalities must be equalities; in particular $u_n^{*y}$ is a ground state for $\cE_{p, \lambda_n}(\R^N \x [0, l(\cG)])$, and by step 2 is independent on $y$. It follows that $u_n$ is also independent on the $y$-variable. Suppose, by contradiction, that this is not the case. Then there exist $x \in \R^N$, and $t \geq 0$ such that the superlevel set
		\[
		\{y \in \cG : u_n(x,y)>t\}
		\]
		is neither empty or equal to $\cG$, namely,
		\[
		\mathcal{L}^1\left(\{y \in \cG : u_n(x,y)>t\}\right) \in (0, l(\cG)),
		\]
		where $\mathcal{L}^1$ is the Lebesgue measure on $\cG$ (we can argue at a pointwise level since each $u_n$ is continuous). By equimeasurability,
		\[
		\mathcal{L}^1\left(\{y \in \cG^* : u_n^{*y}(x,y)>t\}\right) \in (0, l(\cG)).
		\]
		This is impossible, since $u_n^{*y}(x, y)$ is constant on $\cG^*$, for every fixed $x \in \R^N$. Therefore, $u_n$ is independent on the $y$-variable, completing the proof.
\end{proof}
At this point Proposition \ref{thm 3.1} follows from Lemma \ref{lemma 3.6}.
\begin{proof}[Proof of Proposition \ref{thm 3.1}] The argument follows exactly the proof of \cite[Theorem 3.1]{TerTzvVis}, so we only sketch the main steps. Define
		\[
		\lambda^*_{\R^N \x \cG,p}:=\inf\{\lambda>0 : \text{each ground state of mass $1$ of } E_{p, \lambda}(\cdot, \R^N \x \cG) \text{ is such that } \|\partial_y u\|_2^2=0\}.
		\]
	The proof proceeds in three steps. \\
	\textit{First step: $\lambda^*_{\R^N \x \cG,p}<+\infty$, and $\pa_y u_\lambda \equiv 0$ for every mass-one ground state of $E_{p,\lambda}(\cdot,\R^N \times \cG)$ if $\lambda>\lambda^*_{\R^N \x \cG,p}$.} This follows directly from Lemma~\ref{lemma 3.6}. \\
	\textit{Second step: $\lambda^*_{\R^N \x \cG,p}>0$.} Let $\rho(y) \in H^1(\cG)$ be a nonconstant strictly positive function on $\cG$ satisfying $\int_{\cG}\rho^2\, dy=1$, and let
	\[
	\psi(x,y):=\rho(y)^{\frac{4}{4-N(p-2)}}Z_{1,p}\left(\rho(y)^{\frac{2(p-2)}{4-N(p-2)}}x\right).
	\] 
	Then $\|\psi\|_2=1$ and
	\[
	\begin{split}
		\frac{1}{2}\|\nabla_x \psi\|_2^2-\frac{1}{p}\|\psi\|_p^p  &= \left(\int_{\cG} \rho^{2+\frac{4(p-2)}{4-N(p-2)}}\right) E_p(Z_1,\R^N)\\
		& <l(\cG)^{1-\left(1+\frac{2(p-2)}{4-N(p-2)}\right)} \cE_p\left(1, \R^N\right) = l(\cG) \cE_p\left(\frac{1}{l(\cG)}, \R^N\right),
		\end{split}
	\]
	where in the last step we used the Jensen inequality (the inequality is strict since $\rho$ is non-constant and $1+\frac{2(p-2)}{4-N(p-2)}>1$), the fact that $E_p(Z_1,\R^N)=\cE_p(1, \R^N)<0$, and \eqref{scalamento energia}. Consequently,
	\[
	\lim\limits_{\lambda \rightarrow 0^+}\cE_{p, \lambda}(\R^N \x \cG) \leq 	\lim\limits_{\lambda \rightarrow 0^+}E_{p, \lambda}(\psi, \R^N \x \cG) = \frac{1}{2}\|\nabla_x \psi\|_2^2-\frac{1}{p}\|\psi\|_p^p<l(\cG)\cE_p\left(\frac{1}{l(\cG)}, \R^N\right),
	\]
	and hence any mass-one ground state of
	$E_{p,\lambda}(\cdot,\R^N\times\cG)$ depends nontrivially on the
	$y$-variable for $\lambda$ sufficiently small. \\
	\textit{Third step: if $\lambda < \lambda^*_{\R^N \x \cG,p}$, then every mass-one ground state of $E_{p, \lambda}(\cdot, \R^N \x \cG)$ depends on the $y$-variable.} Assume by contradiction that there exists $\lambda_1 <\lambda^*_{\R^N \x \cG,p}$ and a mass-one ground state $v_1$ of $E_{p, \lambda_1}(\cdot, \R^N \x \cG)$ which is independent on the $y$-variable. Then $v_1$ is a semi-trivial solution and 
	\begin{equation}\label{uguaglianza energie}
	\cE_{p, \lambda_1}(\R^N \x \cG)=l(\cG)\cE_p\left(\frac{1}{l(\cG)}, \R^N\right).
	\end{equation}
	On the other hand, by the definition of $\lambda^*_{\R^N \x \cG,p}$, there exists $\lambda_2 \in (\lambda_1, \lambda^*_{\R^N \x \cG,p})$ and a mass-one ground state $v_2$ of $E_{p, \lambda_2}(\R^N \x \cG)$ which depends on the $y$-variable. Recalling \eqref{energie minore}, we obtain
	\[
	\cE_{p, \lambda_1}(\R^N \x \cG) \leq E_{p, \lambda_1}(v_2, \R^N \x \cG) < E_{p, \lambda_2}(v_2, \R^N \x \cG) \leq l(\cG)\cE_p\left(\frac{1}{l(\cG)}, \R^N\right),
	\]
	which contradicts \eqref{uguaglianza energie}.
\end{proof}

\begin{proof}[Proof of Theorem \ref{thm 1.3}]
We only provide a sketch of the proof of this statement, which is identical to \cite[Theorem 1.3]{TerTzvVis}. 
	Considering the scaling $T_{\mu}$ defined in \eqref{trasformazione thm 3.1} and its properties stated in \eqref{proprieta trasformazione thm 3.1}, $u_{\mu} \in H^1_{\mu}(\R^N \x \cG)$ is a ground state of $E_p(\cdot, \R^N \x \cG)$ if and only if $T_{\mu}(u_{\mu})$ is a mass-one ground state of $E_{p, \lambda_{\mu}}(\cdot, \R^N \x \cG)$, where
	\[
	\lambda_{\mu}:=\mu^{\frac{-2(p-2)}{4-N(p-2)}}.
	\]
	Furthermore, $T_{\mu}(u_{\mu})$ is independent on the $y$-variable if and only if $u_{\mu}$ is. Proposition \ref{thm 3.1} therefore implies that $u_{\mu}$ is semi-trivial whenever $\lambda_{\mu}>\lambda^*_{\R^N \x \cG, p}$, whereas it depends on the $y$-variable whenever
	$\lambda_{\mu}<\lambda^*_{\R^N \x \cG, p}$. Setting 
	\begin{equation}\label{da lambda a mu}
		\mu_2\left(\R^N \x \cG,p\right):=\left(\lambda^*_{\R^N \x \cG,p}\right)^{-\frac{4-pN+2N}{2(p-2)}}
	\end{equation}
	and observing that $-2(p-2)/(4-N(p-2))<0$ concludes the proof.
\end{proof}

\begin{remark}
The scaling \eqref{trasformazione thm 3.1} also provides a variational characterization of $\mu_2\left(\R^N \x \cG,p\right)$. Indeed, by \eqref{proprieta trasformazione thm 3.1}, $\mu_2\left(\R^N \x \cG,p\right)$ is the largest mass for which $E_{p,\lambda_\mu}(\cdot, \R^N \x \cG)$, with $\lambda_\mu:=\mu^{-2(p-2)/(4-N(p-2))}$, admits a semi-trivial ground state of mass one, i.e. for which $\cE_{p,\lambda_\mu}(\R^N \x \cG)=l(\cG)\cE_p\left(1/l(\cG), \R^N\right)$. Equivalently, $\lambda^*_{\R^N \x \cG,p}$ is the smallest $\lambda$ for which $E_{p,\lambda}(u,\R^N \x \cG) \geq l(\cG)\cE_p\left(1/l(\cG),\R^N\right)$ for every $u \in H^1_1(\R^N \x \cG)$. Since the inequality is trivially satisfied when $\partial_yu \equiv 0$, solving it for $\lambda$ on the remaining functions gives
	\[
	\lambda^*_{\R^N \x\cG,p}= \sup\limits_{\substack{u \in H^1_1(\R^N \x \cG) \\ \partial_y u \not\equiv 0}}\frac{l(\cG)\cE_p\left(\frac{1}{l(\cG)}, \R^N\right)-\frac{1}{2}\|\nabla_x u\|_2^2+\frac{1}{p}\|u\|_p^p}{\frac{1}{2}\|\partial_y u\|_2^2},
	\]
	and hence, by \eqref{da lambda a mu}, $\mu_2\left(\R^N \x \cG,p\right)=\left(\lambda^*_{\R^N \x\cG,p}\right)^{-\frac{4-N(p-2)}{2(p-2)}}$.
\end{remark}

\section{The supercritical case}\label{supercritical case}
We now address the problem of existence of solutions of \eqref{schroedinger equation} for $p \in (2+4/(N+1), 2+4/N)$. This range is $L^2$-supercritical for the NLS equation in $\R^N \times \cG$ and subcritical for the NLS equation in $\R^N$; therefore, $E_p(\cdot, \R^N \x \cG)$ does not admit global minimizers in $H^1_{\mu}(\R^N \x \cG)$, and we search for solutions as local minimizers, in the spirit of \cite{PieVerYu}, where the problem is studied on $\R^N \x M^k$, being $M^k$ a $k$-dimensional compact Riemannian manifold. Actually, our method is slightly different and closer to the one introduced in \cite{BelBouJeaVis} to deal with the NLS equation with partial confinement. We often use the Gagliardo-Nirenberg inequality \eqref{GN con costanti}; more precisely, we fix arbitrarily $B>0$, and we denote $K:=K_{\R^N \x \cG, p, B}$. 
\begin{definition}
	We introduce the following family of open sets in $H^1_{\mu}$:
	\[
	U_{\mu, t}:=\left\{u \in H^1\left(\R^N \x \cG\right) \text{ such that } \|u\|_2^2=\mu, \, \|\nabla_{x,y} u\|_2^2<t\right\}.
	\]
\end{definition}

\begin{lemma}\label{esistenza minimo locale Jea}
Let $\cG$ be a compact metric graph, $t>0$, $N\geq 1$, $p<2+4/N$. Moreover, assume that there exists $\mu^* \in (0,1]$ such that 
\[
\inf\limits_{U_{\mu, t}}E_p(\cdot, \R^N \x \cG)	< \inf\limits_{U_{\mu, t}\setminus U_{\mu, t\mu}}E_p(\cdot, \R^N \x \cG) \qquad \text{for every }\mu<\mu^*. 
\]
Then, for every $\mu<\mu^*$ there exists a mass-constrained local minimizer of $E_p(\cdot, \R^N \x \cG)$ of mass $\mu$, which is a global minimizer in the open set $U_{\mu,t}$.
\end{lemma}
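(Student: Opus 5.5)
Fix $\mu<\mu^*$ and set $m_\mu:=\inf_{U_{\mu,t}}E_p(\cdot,\R^N\x\cG)$. The plan is to run the direct method on a minimizing sequence for $m_\mu$ confined to $U_{\mu,t}$. The strict inequality in the hypothesis lets us keep such a sequence inside the smaller set $U_{\mu,t\mu}$, which is well separated from the boundary $\{\|\nabla_{x,y}u\|_2^2=t\}$ because $\mu<1$. Compactness then follows the subcritical proof of Theorem \ref{caso sottocritico e critico}. Finally, a minimizer of $E_p$ on the open set $U_{\mu,t}$ is a local minimizer on $H^1_\mu$ once its Dirichlet energy is strictly below $t$.

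\textbf{First step: finiteness and a good minimizing sequence.} On $U_{\mu,t}$ the $H^1$ norm is bounded by $t+\mu$, so by \eqref{GN} $E_p$ is bounded below there and $m_\mu>-\infty$. Take a minimizing sequence $\{u_n\}\subset U_{\mu,t}$. Replacing $u_n$ by $|u_n|^{*x}$ keeps it in $U_{\mu,t}$ and does not increase the energy, by Proposition \ref{propr x-riarrangiamento}; so we may assume $u_n\ge0$ and $u_n$ radially decreasing in $x$. By the hypothesis, eventually $E_p(u_n)<\inf_{U_{\mu,t}\setminus U_{\mu,t\mu}}E_p$, hence $u_n\in U_{\mu,t\mu}$, i.e. $\|\nabla_{x,y}u_n\|_2^2<t\mu$. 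Up to a subsequence $u_n\rightharpoonup u$ in $H^1$, and weak lower semicontinuity gives $\|\nabla_{x,y}u\|_2^2\le t\mu<t$.

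\textbf{Second step: nonvanishing and full mass (the main obstacle).} For nonvanishing we first need $m_\mu<0$. The semi-trivial $Z_{\mu/l(\cG),p}$ has negative energy, and by \eqref{scalamento norme} its Dirichlet energy is $l(\cG)\,(\mu/l(\cG))^{1+\frac{2(p-2)}{4-N(p-2)}}\|\nabla Z_{1,p}\|^2_{L^2(\R^N)}$. This is $o(\mu)$ as $\mu\to0$, since $p<2+4/N$. So it belongs to $U_{\mu,t}$ for small $\mu$; this may force $\mu^*$ smaller, or, if the statement is meant as given, I would argue directly from the hypothesis that $m_\mu<0$. Granting negativity, $\|u_n\|_p^p$ is bounded below. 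Proposition \ref{prop GN localizzata} together with Remark \ref{GN localizzata radiale} then gives $\|u_n\|_{L^2(Q_0\x\cG)}\ge c>0$, and by local compactness $u\not\equiv0$.

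For full mass I cannot invoke strict subadditivity on a constrained set. Instead, let $\tau:=\|u\|_2^2\in(0,\mu]$ and use the Brezis--Lieb splitting $E_p(u_n)=E_p(u_n-u)+E_p(u)+o(1)$, together with $\|\nabla(u_n-u)\|^2+\|\nabla u\|^2\le t\mu+o(1)$. Both pieces then lie in sublevels of the corresponding sets $U_{\cdot,t}$. Scaling $v\mapsto\sqrt{s}\,v$ with $s>1$ shows that $\nu\mapsto m_\nu/\nu$ is strictly decreasing; the Dirichlet constraint survives the scaling thanks to the margin $t\mu\cdot(\mu/\tau)\le t$, because $\mu\le1$ and $\mu\le\mu^*\le 1$. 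Checking these inclusions carefully is the delicate point. With this monotonicity, the contradiction argument of the fourth step of the subcritical proof yields $\tau=\mu$.

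\textbf{Conclusion.} Since $\tau=\mu$, we get $u_n\to u$ in $L^2$, and by \eqref{GN} also in $L^p$. Hence $E_p(u)\le\liminf E_p(u_n)=m_\mu$ with $u\in U_{\mu,t}$, so $u$ is a global minimizer on $U_{\mu,t}$. Because $\|\nabla_{x,y}u\|_2^2\le t\mu<t$ and $U_{\mu,t}$ is open in $H^1_\mu$, $u$ is a local minimizer of $E_p$ on $H^1_\mu(\R^N\x\cG)$.
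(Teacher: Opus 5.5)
There is a genuine gap, at the point you flag yourself: the sign of $m_\nu$. Your overall scheme is the paper's: confine an $x$-radially decreasing minimizing sequence to $U_{\mu,t\mu}$, get nonvanishing from the localized Gagliardo--Nirenberg inequality, and exclude mass loss by strict decrease of $\nu\mapsto m_\nu/\nu$.

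\textbf{Negativity of $m_\nu$.} Your test function $Z_{\mu/l(\cG),p}$ lies in $U_{\mu,t}$ only when $l(\cG)(\mu/l(\cG))^{1+\frac{2(p-2)}{4-N(p-2)}}\|\nabla Z_{1,p}\|_{L^2(\R^N)}^2<t$. So you prove the lemma only for $\mu$ small depending on $t$, not for every $\mu<\mu^*$. The hypothesis compares two infima and says nothing about their sign, so you cannot ``argue directly from the hypothesis''.

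The missing idea is an $L^2$-preserving dilation in $x$ of a $y$-independent function. Take $s\star Z(x,y):=s^{N/2}Z(sx,y)$. It has mass $\mu$ for every $s$, Dirichlet energy $s^2 l(\cG)\|\nabla_x Z\|_2^2\to 0$, and
\[
E_p(s\star Z,\R^N\x\cG)=\frac{l(\cG)}{2}s^2\|\nabla_x Z\|_2^2-\frac{l(\cG)}{p}s^{\frac{N(p-2)}{2}}\|Z\|_p^p<0
\]
for $s$ small, exactly because $N(p-2)/2<2$, i.e.\ $p<2+4/N$. This gives $m_\nu<0$ for every $\nu>0$ and every $t>0$. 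You need it at $\nu=\mu$ for nonvanishing. You also need it at the intermediate masses $\tau$ and $\mu-\tau$ for strict monotonicity, and these need not be small.

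\textbf{The inclusion in the monotonicity.} Your margin $t\mu\cdot(\mu/\tau)\le t$ is false whenever $\tau<\mu^2$. So you cannot scale the weak limit or the remainder, whose Dirichlet energy is only known to be $\le t\mu$, up to mass $\mu$. Instead, apply the hypothesis at the smaller mass. For $r<s\le\mu<\mu^*$, a minimizing sequence $\{v_n\}$ for $m_r$ eventually lies in $U_{r,rt}$. Hence $\sqrt{s/r}\,v_n\in U_{s,st}\subseteq U_{s,t}$, because $s\le 1$. Then
\[
m_s\le \frac{s}{r}m_r-\bigl((s/r)^{p/2}-s/r\bigr)\frac1p\liminf_n\|v_n\|_p^p<\frac{s}{r}m_r,
\]
with strictness coming from $m_r<0$.

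With these two fixes your Brezis--Lieb splitting $m_\mu\ge m_{\mu-\tau}+m_\tau$ goes through, after normalizing the mass of $u_n-u$. The paper uses a single piece instead: if $\tau<\mu$, then $m_\tau\le E_p(u)\le\liminf_n E_p(u_n)=m_\mu<\frac{\mu}{\tau}m_\tau$, contradicting $m_\tau<0$. Your splitting does not need the $L^p$ convergence implicit in $E_p(u)\le\liminf_n E_p(u_n)$ when $\tau<\mu$. The paper can get that convergence from the uniform decay of $x$-radially decreasing functions.
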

\begin{proof} The proof is divided into the following steps. \\
\textit{First step: $\inf_{U_{\mu,t}} E_p(\cdot, \R^N \x \cG)<0$.} Let $Z(x,y):=Z_{\mu/l(\cG),p}(x) \in H^1_{\mu}(\R^N \x \cG)$ be the semi-trivial solution of \eqref{schroedinger equation} of mass $\mu$, defined in Remark \ref{soluzione triviale}. Then, for $s>0$ sufficiently small we have
\[
s \star Z(x,y):=s^{\frac{N}{2}}Z(sx,y) \in U_{\mu, t},
\]
since $\left\|s \star Z\right\|_{L^2(\R^N \x \cG)}^2=\mu$ for every $s>0$ and
\begin{equation}\label{condizione sulla s}
\left\|\nabla_x \left(s \star Z\right)\right\|_{L^2(\R^N \x \cG)}^2=s^2 l(\cG) \left\| \nabla_x Z\right\|_{L^2(\R^N)}^2<t \qquad \text{for } s<\left(\frac{t}{l(\cG)\left\|\nabla_x Z\right\|_{2}^2}\right)^{\frac{1}{2}}.
\end{equation}
Moreover, by trivial rescalings and the Pohozaev's identity \eqref{identita di Pohozaev}:
\[
\begin{split}
E_p\left(s\star Z, \R^N \x \cG\right)&=\frac{l(\cG)}{2}s^2\left\|\nabla_x Z\right\|_2^2-\frac{l(\cG)}{p}s^{\frac{N(p-2)}{2}}\left\|Z\right\|_p^p\\
&=\frac{l(\cG)}{2}s^2\left\|\nabla_x Z\right\|_2^2 \left(1-\frac{4}{N(p-2)}s^{\frac{N(p-2)}{2}-2}\right).
\end{split}
\]
Thus, 
\[
\inf\limits_{U_{\mu,t}}E_p(\cdot, \R^N \x \cG) \leq E_p(s\star Z)=\frac{l(\cG)}{2}s^2\|\nabla_x Z\|_2^2 \left(1-\frac{4}{N(p-2)}s^{\frac{N(p-2)}{2}-2}\right)
\]	
for every $s$ satisfying \eqref{condizione sulla s}, and the last quantity is negative for $s$ sufficiently small, as $N(p-2)/2-2<0$. This proves the first step. \\

\textit{Second step: properties of minimizing sequences.} Let $\{u_n\}\subseteq U_{\mu,t}$ be a nonnegative minimizing sequence for $E_p(\cdot, \R^N \x \cG)$ such that each $u_n$ is radially decreasing with respect to the $x$-variable. By the hypothesis, $\{u_n\}\subseteq U_{\mu,t\mu}$. 
Since $\|u_n\|_{H^1}$ is bounded, up to a subsequence $u_n \rightharpoonup u$ for some $u \in H^1(\R^N \x \cG)$, and 
\[
\|u\|_2^2 \leq \mu \qquad \text{ and } \qquad \left\|\nabla_{x,y}u\right\|_2^2 \leq t\mu.
\] 
Additionally, since $\inf_{U_{\mu,t}}E_p(\cdot, \R^N \x \cG) <0$, there exists a constant $C>0$ such that 
\[
\|u_n\|_p^p \geq C, \qquad \text{ for every } n\in \mathbb{N}.
\]
By the localized Gagliardo-Nirenberg inequality \eqref{GN localizzata} and arguing exactly as in the proof of the second step of Theorem \ref{caso sottocritico e critico}, we deduce that $u \not\equiv 0$ (this time, we use \eqref{GN localizzata} for $p>2+4/(N+1)$). We aim at proving that $\|u\|_2^2=\mu$. \\
\textit{Third step: the function $s \in (0, \mu^*] \mapsto \left(\inf_{U_{s,t}}E_p(\cdot, \R^N \x \cG)\right)/s$ is strictly decreasing.} Let $r<s\leq \mu^*$ and let $\{v_n\} \subseteq U_{r,t}$ be a nonnegative minimizing sequence for $E_p(\cdot, \R^N \x \cG)$, radially decreasing in $x$. By the second step, $\{v_n\} \subseteq U_{r, rt}$ and $\|v_n\|_p^p \geq C>0$; consequently $\{\sqrt{s/r}\,v_n\}\subseteq U_{s, st}\subseteq U_{s,t}$, and since
\[
E_p\left(\sqrt{\tfrac{s}{r}}\, v_n, \R^N \x \cG\right)=\frac{s}{r}E_p(v_n, \R^N\x\cG)-\left(\left(\frac{s}{r}\right)^{\frac{p}{2}}-\frac{s}{r}\right)\frac{\|v_n\|_p^p}{p},
\]
passing to the $\liminf$ we obtain $\inf_{U_{s,t}}E_p \leq \frac{s}{r}\inf_{U_{r,t}}E_p-\left(\left(\frac sr\right)^{p/2}-\frac sr\right)\frac Cp<\frac{s}{r}\inf_{U_{r,t}}E_p$.\\
\textit{Fourth step: $u$ is the desired local minimizer.} If $\|u\|_2^2=\tau<\mu$, then $u \in U_{\tau,t}$ and, by the previous step applied with $r=\tau$ and $s=\mu$,
\[
\inf\limits_{U_{\tau, t}}E_p \leq E_p(u, \R^N \x \cG)\leq \liminf_n E_p(u_n, \R^N \x \cG) =\inf\limits_{U_{\mu, t}}E_p< \frac{\mu}{\tau}\inf\limits_{U_{\tau, t}}E_p,
\]
which contradicts $\inf_{U_{\tau, t}}E_p<0$. Hence $\|u\|_2^2=\mu$ and, arguing as in the fifth step of Theorem \ref{caso sottocritico e critico}, $\inf_{U_{\mu,t}}E_p(\cdot, \R^N \x\cG) = E_p(u, \R^N \x\cG)$. Since $\|\nabla_{x,y}u\|_2^2 \leq t \mu<t$, the function $u$ is an interior point of $U_{\mu,t}$, and is therefore the desired local minimizer.
\end{proof}

Theorem \ref{minimo locale intro} is a direct corollary of the next result.

\begin{thm}
	\label{minimo locale}
	Let $\cG$ be a compact metric graph, $N \geq 1$, $p \in \left(2+4/(N+1), 2+4/N\right)$. Then, there exists a critical mass $\mu^*_{ex}\left(\R^N \x \cG, p\right)>0$ such that, for every $0<\mu<\mu_{ex}^*\left(\R^N \x \cG, p\right)$, the functional $E_p(\cdot, \R^N \x \cG)$ admits a local minimum $u_{\mu}$ in $H^1_{\mu}$. More precisely, there exists $t^*>0$ such that $u_{\mu}$ is a minimum in $U_{\mu, t^*\mu}$ for every $\mu \in (0, \mu^*_{ex})$. Moreover, we have the estimate
	\begin{equation}\label{stima mex}
		\mu_{ex}^*\left(\R^N \x \cG, p\right) \ge \left(\frac{p\left(1-\frac{2}{\theta(p)}\right)^{\frac{\theta(p)}{2}-1}}{\theta(p)KB^{\frac{\theta(p)}{2}-1}}\right)^{\frac{2}{p-2}}l(\cG)^{-\frac{4}{p-2}+N+1}.
	\end{equation}
\end{thm}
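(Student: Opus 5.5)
The plan is to verify the hypothesis of Lemma \ref{esistenza minimo locale Jea} with a suitable choice of $t$ depending on $\mu$, using \eqref{GN con costanti}. The lemma is stated with a fixed $t$, but its proof only uses the sublevel structure. I will therefore run it with the sets $U_{\mu,t^*\mu}$ and $U_{\mu,t^*\mu}\setminus U_{\mu,\tau t^*\mu}$ for some $\tau<1$, or equivalently pick $t=t^*\mu/\mu^*$ so that the claimed inclusion holds. Throughout, write $\theta=\theta(p)>2$, since $p>2_*$.

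The starting point is the lower bound from \eqref{GN con costanti}: for $u\in H^1_\mu$ with $\|\nabla_{x,y}u\|_2^2=s$,
\[
E_p(u)\ge \frac12 s-\frac{K}{p}\Bigl(s+\frac{B\mu}{l(\cG)^2}\Bigr)^{\theta/2}\mu^{\frac{p-\theta}{2}}=:f_\mu(s).
\]
For the upper bound I use the semi-trivial comparison $s\star Z_{\mu/l(\cG),p}$ from the first step of Lemma \ref{esistenza minimo locale Jea}. This gives $\inf_{U_{\mu,t}}E_p<0$ as soon as $t>0$, since the negative energy is attained at arbitrarily small gradient.

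The key step is to show that on a shell $\{a\mu\le s\le t^*\mu\}$ the lower bound $f_\mu$ is strictly positive, or at least exceeds the infimum. Then minimizing sequences in $U_{\mu,t^*\mu}$ automatically stay in $U_{\mu,a\mu}$ and the argument of Lemma \ref{esistenza minimo locale Jea} goes through. To find the shell, substitute $s=\sigma\mu$. Then $f_\mu(\sigma\mu)=\mu\bigl[\frac{\sigma}{2}-\frac{K}{p}\mu^{\frac{p-2}{2}}(\sigma+B/l(\cG)^2)^{\theta/2}\bigr]$. Positivity at a given $\sigma$ requires
\[
\mu^{\frac{p-2}{2}}<\frac{p\,\sigma}{2K(\sigma+B/l^2)^{\theta/2}}.
\]
I would optimize the right-hand side in $\sigma$. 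Its maximum is attained at $\sigma^*=\frac{2}{\theta-2}\,B/l^2$, that is $\sigma+B/l^2=\frac{\theta}{\theta-2}B/l^2$. Plugging this in gives the threshold
\[
\mu_{ex}^*\ge\Bigl(\frac{p(1-2/\theta)^{\theta/2-1}}{\theta K B^{\theta/2-1}}\Bigr)^{\frac{2}{p-2}}l(\cG)^{\frac{2(\theta-2)}{p-2}},
\]
and $\frac{2(\theta-2)}{p-2}=N+1-\frac{4}{p-2}$ matches \eqref{stima mex}. I take $t^*=\sigma^*$. Thus for $\mu$ below this threshold we get $E_p>0$ on $\{\|\nabla u\|_2^2=t^*\mu\}$. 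By continuity in $\sigma$ the same holds on a thin shell $[(1-\epsilon)t^*\mu,t^*\mu]$, with $\epsilon=\epsilon(\mu)>0$. Meanwhile $\inf_{U_{\mu,t^*\mu}}E_p<0$, so the strict separation required by the lemma holds.

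The main obstacle I expect is adapting the third step of Lemma \ref{esistenza minimo locale Jea} (monotonicity of $\inf/s$), since the sets now scale with mass. The radius of $U_{\mu,t^*\mu}$ is proportional to $\mu$, so I need to check that $\sqrt{s/r}\,v_n$ maps $U_{r,t^*r}$ into $U_{s,t^*s}$. This indeed holds, because the gradient scales by $s/r$. Moreover, the minimizing sequence at mass $r$ lies in the inner part, where the separation holds, provided $r$ is also below the threshold. With that, the weak-limit argument and the localized inequality \eqref{GN localizzata} (to exclude vanishing) give $\|u\|_2^2=\mu$ and $\|\nabla u\|_2^2<t^*\mu$. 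So $u$ is interior to $U_{\mu,t^*\mu}$, hence a local minimizer.
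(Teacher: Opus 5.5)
Your route differs from the paper's. The paper applies Lemma~\ref{esistenza minimo locale Jea} verbatim, with a fixed radius $t=t_1$. On $U_{\mu,t}\setminus U_{\mu,t\mu}$ it can only use $\|\nabla_{x,y}u\|_2^{\theta-2}<t^{(\theta-2)/2}$, which gives the weaker threshold $\min\{1,\mu'\}$ with exponent $2/(p-\theta)$. It then reaches \eqref{stima mex} by dilating the graph, via \eqref{invarianza costante GN} and \eqref{scaling}.

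You work instead with the mass-proportional sets $U_{\mu,t^*\mu}$ and the variable $\sigma=s/\mu$. This gives the exponent $2/(p-2)$ and no cap at $1$ directly, with the explicit radius $t^*=\frac{2B}{(\theta-2)l(\cG)^2}$. This is what the paper's $t^*=L^2t_1$ amounts to, $t_1$ being computed on $L\cG$. Your optimization in $\sigma$ and the resulting threshold are correct. The price is that the lemma must be reproved for these sets. Choosing $t=t^*\mu/\mu^*$ is not equivalent, since the lemma compares different masses at one and the same $t$.

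In reproving it, you placed the difficulty in the wrong step. The third step is in fact easier for you: $v\mapsto\sqrt{s/r}\,v$ maps $U_{r,t^*r}$ onto $U_{s,t^*s}$. So $m(s)<\frac{s}{r}\,m(r)$ for all $0<r<s$, where $m(\tau):=\inf_{U_{\tau,t^*\tau}}E_p$, with no threshold needed.

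The step that does not carry over is the fourth one. If the weak limit $u$ has mass $\tau<\mu$, the lemma uses $u\in U_{\tau,t}$, which is automatic for fixed $t$. You need $u\in U_{\tau,t^*\tau}$, but you only know $\|\nabla_{x,y}u\|_2^2\le(1-\epsilon)t^*\mu$, which may exceed $t^*\tau$. So the chain $m(\tau)\le E_p(u)\le m(\mu)<\frac{\mu}{\tau}m(\tau)$ is not yet justified.

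The missing argument is short:
\begin{itemize}
\item Since $p>\theta$, you have $f_\tau\ge f_\mu$ pointwise for $\tau\le\mu$.
\item $f_\tau$ is concave in $s$ because $\theta>2$.
\item $f_\tau(t^*\tau)>0$, since the negative term of $f_\tau(\sigma\tau)/\tau$ decreases with $\tau$; and $f_\tau(t^*\mu)\ge f_\mu(t^*\mu)>0$.
\end{itemize}
Hence $f_\tau>0$ on $[t^*\tau,t^*\mu]$. Now $f_\tau(\|\nabla_{x,y}u\|_2^2)\le E_p(u)\le\liminf_n E_p(u_n)<0$, the last inequality being the same one the paper's lemma uses. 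Together with $\|\nabla_{x,y}u\|_2^2<t^*\mu$, this gives $\|\nabla_{x,y}u\|_2^2<t^*\tau$, i.e.\ $u\in U_{\tau,t^*\tau}$. The contradiction then follows as in the lemma, and with this addition your proof is complete.
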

\begin{proof}
	\textit{First step: application of Lemma \ref{esistenza minimo locale Jea}.} We fix $t>0$ and search $\tilde{\mu}_{ex}\left(\R^N \x \cG,p\right)\in (0,1]$ such that, for every $\mu <\tilde{\mu}_{ex}\left(\R^N \x \cG,p\right)$,
\begin{equation}
\label{condizione da verificare}
\inf\limits_{U_{\mu, t}}E_p(\cdot, \R^N \x \cG)	<0< \inf\limits_{U_{\mu, t}\setminus U_{\mu, t\mu}}E_p(\cdot, \R^N \x \cG).
\end{equation}
By the Gagliardo-Nirenberg inequality \eqref{GN con costanti}, for every $u \in U_{\mu, t}\setminus U_{\mu, t\mu}$, 
\[
\begin{split}
E_p(u, \R^N \x \cG) &\geq \frac{1}{2}\|\nabla_{x,y}u\|_2^2-\frac{K}{p}\mu^{\frac{p-\theta(p)}{2}}\left(\|\nabla_{x,y}u\|_2^2+\frac{B\mu}{l(\cG)^2}\right)^{\frac{\theta(p)}{2}} \\
&=\frac{1}{2}\|\nabla_{x,y}u\|_2^2\left(1-\frac{2K}{p}\mu^{\frac{p-\theta(p)}{2}}\left(1+\frac{B\mu}{l(\cG)^2\|\nabla_{x,y}u\|_2^2}\right)^{\frac{\theta(p)}{2}}\|\nabla_{x,y}u\|_2^{\theta(p)-2}\right) \\
&\geq \frac{1}{2}\|\nabla_{x,y}u\|_2^2\left(1-\frac{2K}{p}\mu^{\frac{p-\theta(p)}{2}}\left(1+\frac{B}{l(\cG)^2t}\right)^{\frac{\theta(p)}{2}}t^{\frac{\theta(p)}{2}-1}\right).
\end{split}
\]
The right hand side is positive for $\mu<\mu'(t)$. In order to obtain a larger threshold for $\mu$, we choose $t=t_1$ defined by
\[
t_1:=\frac{2B}{l(\cG)^2\left(\theta(p)-2\right)} = \mathrm{argmin}\left\{ \left(1+\frac{B}{l(\cG)^2t}\right)^{\frac{\theta(p)}{2}}t^{\frac{\theta(p)}{2}-1}: \ t>0\right\}.
\]
In this way, the term in the brackets is positive if 
\[
\mu< \left(\frac{p\left(1-\frac{2}{\theta(p)}\right)^{\frac{\theta(p)}{2}-1}}{\theta(p)KB^{\frac{\theta(p)}{2}-1}}\right)^{\frac{2}{p-\theta(p)}}l(\cG)^{\frac{2\theta(p)-4}{p-\theta(p)}}=: \mu'(\R^N \x \cG,p).
\]
Thus, the thesis follows for $\tilde{\mu}_{ex}(\R^N \x \cG,p):= \min\{1, \mu'(\R^N \x \cG,p)\}$.\\
\textit{Second step: improvement of the estimate for $\mu_{ex}^*(\R^N \x \cG, p)$.} In the first step, we showed the existence of $\mu_{ex}^*(\R^N \times \cG,p)$ and obtained the estimate $\mu_{ex}^*(\R^N \times \cG,p) \ge \tilde{\mu}_{ex}(\R^N \times \cG,p)$. However, recalling the definition of $u_L$ given in Remark \ref{remark scaling}, it is possible to deduce a relation between $\mu_{ex}^*(\R^N \x \cG,p)$ and $\mu_{ex}^*(\R^N \x L\cG,p)$; exploiting this relation, in this step we obtain a stronger lower bound.

Let
\[
C:=\frac{p\left(1-\frac{2}{\theta(p)}\right)^{\frac{\theta(p)}{2}-1}}{\theta(p)KB^{\frac{\theta(p)}{2}-1}}.
\] 
Choose $L\geq 0$ such that, for the scaled graph $L \cG$,
\[
\left(\frac{p\left(1-\frac{2}{\theta(p)}\right)^{\frac{\theta(p)}{2}-1}}{\theta(p)KB^{\frac{\theta(p)}{2}-1}}\right)^{\frac{2}{p-\theta(p)}}l(L\cG)^{\frac{2\theta(p)-4}{p-\theta(p)}}=1 \iff L=\frac{1}{l(\cG)C^{\frac{1}{\theta(p)-2}}}.
\]
Thus, $\tilde{\mu}_{ex}\left(\R^N \x L\cG,p\right)$=1. We now apply the scaling \eqref{scaling} from \(L\cG\) to \(\cG\), namely with scaling factor \(L^{-1}\). By Remark \ref{remark scaling}, the existence of a local minimizer of $E_p(\cdot,\R^N\times L\cG)$ for every
$\mu<\tilde{\mu}_{ex}(\R^N\times L\cG,p)$, which is global in
$U_{\mu,t_1\mu}$, implies the existence of a local minimizer of $E_p(\cdot,\R^N\times\cG)$ for every
$\mu<\mu_{ex}^*(\R^N\times\cG,p)$ which is global in
$U_{\mu,t^*\mu}$, where
\[
t^*=L^2t_1,
\qquad
\mu_{ex}^*(\R^N\times\cG,p)
=
\tilde{\mu}_{ex}(\R^N\times L\cG,p)
L^{\frac{4}{p-2}-N-1}
=
L^{\frac{4}{p-2}-N-1}.
\]
Thus, 
\[
\mu_{ex}^*\left(\R^N \x \cG,p\right)=\left(\frac{1}{l(\cG)C^{\frac{1}{\theta(p)-2}}}\right)^{\frac{4}{p-2}-N-1}=C^{\frac{2}{p-2}}l(\cG)^{-\frac{4}{p-2}+N+1},
\]
which is the claimed critical mass.
\end{proof}
\begin{remark}
The thesis of Theorem \ref{minimo locale} is analogous to that of \cite[Theorem 1.2, part 1]{PieVerYu}, but the proof is rather different. The strategy developed here can be used also in the context of \cite{PieVerYu}, to simplify the proof. The estimate for $\mu_{ex}^*$ found here is coherent with the threshold found in the proof of \cite[Lemma 3.2]{PieVerYu}. 
\end{remark}	

In the remainder of this section we investigate the semi-triviality of the local minimizer found in Theorem \ref{minimo locale}. This is motivated by the following preliminary statement.
\begin{lem}\label{lemma stima minimo locale}
	Under the assumptions of Theorem \ref{minimo locale}, $Z_{\mu/l(\cG),p} \in U_{\mu, t^*\mu}$ for every $\mu \in (0, \mu^*_{ex}(\R^N \x \cG,p))$.
\end{lem}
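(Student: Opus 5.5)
The plan is to avoid computing $\|\nabla_x Z_{\mu/l(\cG),p}\|_2^2$ explicitly. A direct use of \eqref{scalamento norme} only gives smallness for small $\mu$, not for all $\mu<\mu^*_{ex}$. Instead I would run a continuity argument that combines two facts. First, the energy is negative along the whole $x$-dilation path ending at $Z:=Z_{\mu/l(\cG),p}$. Second, the energy is positive on the ``annulus'' $U_{\mu,t^*}\setminus U_{\mu,t^*\mu}$, which is what the proof of Theorem \ref{minimo locale} established.

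\emph{Step 1 (positivity on the annulus).} For $\mu<\mu^*_{ex}(\R^N\x\cG,p)$ I would record
\[
\inf_{U_{\mu,t^*}\setminus U_{\mu,t^*\mu}}E_p(\cdot,\R^N\x\cG)>0 .
\]
On the rescaled graph $L\cG$ this is the right inequality in \eqref{condizione da verificare} with $t=t_1$ and $\mu<\tilde\mu_{ex}(\R^N\x L\cG,p)=1$. It transfers to $\cG$ through the scaling \eqref{scaling} with factor $L^{-1}$. By Remark \ref{remark scaling}, this scaling multiplies the mass by $L^{\frac{4}{p-2}-N-1}$, and it multiplies both $\|\nabla_{x,y}u\|_2^2$ and $E_p$ by the same positive power of $L$. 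Hence it maps the annulus for $(L\cG,\mu,t_1)$ onto the annulus for $(\cG,\mu L^{\frac{4}{p-2}-N-1},t^*)$, with $t^*=L^2t_1$. This mirrors the bookkeeping already done in the second step of that proof, and I expect it to be the only delicate point: one has to check that the two exponents combine so that $t_1\mu\mapsto t^*\mu'$ exactly.

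\emph{Step 2 (negative energy along the dilation path).} For $s\in(0,1]$ set $s\star Z(x,y)=s^{N/2}Z(sx,y)\in H^1_\mu(\R^N\x\cG)$. As computed in the first step of Lemma \ref{esistenza minimo locale Jea}, using the Pohozaev identity \eqref{identita di Pohozaev},
\[
E_p(s\star Z)=\tfrac{l(\cG)}{2}s^2\|\nabla_xZ\|_2^2\Bigl(1-\tfrac{4}{N(p-2)}s^{\frac{N(p-2)}{2}-2}\Bigr).
\]
Since $p<2+4/N$, we have $\frac{4}{N(p-2)}>1$ and $s^{\frac{N(p-2)}{2}-2}\ge1$ for $s\le1$, so $E_p(s\star Z)<0$ for every $s\in(0,1]$. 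Moreover $g(s):=\|\nabla_{x,y}(s\star Z)\|_2^2=s^2l(\cG)\|\nabla_xZ\|_2^2$ is continuous and increasing, with $g(0^+)=0$ and $g(1)=\|\nabla_{x,y}Z\|_2^2$.

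\emph{Step 3 (conclusion).} Suppose by contradiction that $\|\nabla_{x,y}Z\|_2^2\ge t^*\mu$. By the intermediate value theorem there is $\bar s\in(0,1]$ with $g(\bar s)=t^*\mu<t^*$, recalling $\mu<\mu^*_{ex}\le1$ after scaling, or simply $\mu<1$ as in the construction. Then $\bar s\star Z\in U_{\mu,t^*}\setminus U_{\mu,t^*\mu}$. By Step 1 it has positive energy, while by Step 2 its energy is negative, which is a contradiction. Hence $\|\nabla_{x,y}Z\|_2^2<t^*\mu$, that is, $Z\in U_{\mu,t^*\mu}$.

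One point to check along the way is that $t^*\mu<t^*$, so that the annulus is nonempty and the point $g(\bar s)=t^*\mu$ really lies in it. This follows from $\tilde\mu_{ex}\le 1$ in the scaled picture and carries over through the same scaling. If not, I would use the annulus $[t^*\mu,\min\{t^*,\cdot\})$ directly, since only the value $t^*\mu$ is needed.
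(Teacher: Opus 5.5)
Your mechanism is sound, and since the paper omits this proof (it defers to \cite[Lemma 3.4]{PieVerYu}), it is a legitimate way to supply one. Step 2 is correct: it only uses the Pohozaev identity \eqref{identita di Pohozaev} and $p<2+4/N$. A mass-preserving path $s\mapsto s\star Z$ of negative energy, along which $\|\nabla_{x,y}(s\star Z)\|_2^2$ sweeps $(0,\|\nabla_{x,y}Z\|_2^2]$, cannot reach a level where the energy is positive. However, the scaling bookkeeping in Step 1 and the closing check in Step 3 are wrong as written.

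\textbf{The outer radius.} Under the scaling with factor $L^{-1}$, masses are multiplied by $L^{a}$, with $a:=\frac{4}{p-2}-N-1$. Both $\|\nabla_{x,y}\cdot\|_2^2$ and $E_p$ are multiplied by $L^{a+2}$. The inner radius behaves as you expect: $t_1\mu\mapsto t^*\nu$ with $\nu=\mu L^a$. The outer one, however, goes to $t_1L^{a+2}=t^*L^a=t^*\mu^*_{ex}$. So the transported set is $U_{\nu,t^*\mu^*_{ex}}\setminus U_{\nu,t^*\nu}$ with $\nu<\mu^*_{ex}$, not $U_{\nu,t^*}\setminus U_{\nu,t^*\nu}$.

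\textbf{The bound $\mu^*_{ex}\le 1$.} This is false in general. We have $\mu^*_{ex}=C^{\frac{2}{p-2}}l(\cG)^{N+1-\frac{4}{p-2}}$, with $C$ as in the second step of the proof of Theorem \ref{minimo locale}. The exponent is positive for $p>2_*$ and $K$ is dilation invariant by \eqref{invarianza costante GN}, so $\mu^*_{ex}>1$ once $\cG$ is long enough. Then, for $\mu\in[1,\mu^*_{ex})$, your set $U_{\mu,t^*}\setminus U_{\mu,t^*\mu}$ is empty and $t^*\mu<t^*$ fails. The bound $\mu<1$ holds on $L\cG$, not on $\cG$.

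\textbf{The repair.} It is what you hint at: you only need positivity at the single level $\|\nabla_{x,y}u\|_2^2=t^*\mu$. This level lies in the correctly transported annulus precisely because $\mu<\mu^*_{ex}$. You can also check it directly, bypassing the scaling. Note that $t^*=L^2t_1=\frac{2B}{l(\cG)^2(\theta(p)-2)}$. Inserting $\|\nabla_{x,y}u\|_2^2=t^*\mu$ into \eqref{GN con costanti} gives
\[
E_p(u,\R^N\x\cG)\ge\frac{t^*\mu}{2}\left(1-\Bigl(\frac{\mu}{\mu^*_{ex}}\Bigr)^{\frac{p-2}{2}}\right)>0
\qquad\text{for all } u\in H^1_\mu(\R^N\x\cG) \text{ with } \|\nabla_{x,y}u\|_2^2=t^*\mu,
\]
where $\mu^*_{ex}$ is the right-hand side of \eqref{stima mex}. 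With this replacing Step 1 and the final check, your Step 3 goes through unchanged.
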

The proof of this fact is analogous to that of \cite[Lemma 3.4]{PieVerYu}, and is therefore omitted. At this point, it is natural to investigate whether the local minimizers obtained in Theorem \ref{minimo locale} exhibit a nontrivial dependence on the $y$-variable, or whether they are simply given by $Z_{\mu/l(\cG),p}$. We aim at proving Theorem \ref{thm 1.3 sopracritico}.
\begin{proof}[Proof of Theorem \ref{thm 1.3 sopracritico}]
This theorem is adapted from \cite[Theorem 1.2, parts 2 and 3]{PieVerYu}, and the arguments trace step by step \cite[Section 4]{PieVerYu}. For this reason, we only provide a sketch of the proof. \\
We proved that, for some $t^*>0$, there exists a local minimum $u_{\mu} \in U_{\mu, t^*\mu}$ for $E_p(\cdot, \R^N \x \cG)$, for every $\mu<\mu_{ex}^*\left(\R^N \x \cG,p\right)$. By Lemmas \ref{esistenza minimo locale Jea} and \ref{lemma stima minimo locale}, for all such $\mu$
\[
Z_{\frac{\mu}{l(\cG)},p} \in U_{\mu, t^*\mu} \qquad \text{ and }\qquad \inf\limits_{U_{\mu, t^*}}E_p(\cdot, \R^N \x \cG)=\inf\limits_{U_{\mu, t^*\mu}}E_p(\cdot, \R^N \x \cG) \leq l(\cG)\cE_p\left(\frac{\mu}{l(\cG)}, \R^N\right)<0.
\]
Following the arguments of \cite[Corollary 4.2]{PieVerYu}, we define
\[
\mu_2(\R^N \x \cG,p):=\inf\left\{0<\mu<\mu^*_{ex}(\R^N \x \cG, p) \text{ such that } \inf\limits_{U_{\mu, t}}E_p(\cdot, \R^N \x \cG) < l(\cG)\cE_p\left(\frac{\mu}{l(\cG)}, \R^N\right)\right\},
\]
with the convention that $\mu_2(\R^N \x \cG,p)=\mu^*_{ex}(\R^N \x \cG, p)$ if the above set is empty. As claimed in \cite[Lemma 4.1]{PieVerYu}, this mass satisfies the requests of the two points of the thesis, provided that $\mu_2>0$. To prove this last fact, we can adapt the arguments of \cite[Lemmas 4.3 and 4.4]{PieVerYu} without modifications. We introduce the transformation 
\[
u(x,y)=\mu^{\frac{2}{4-N(p-2)}}v\left(\mu^{\frac{p-2}{4-N(p-2)}}x, y\right).
\]
Let $0<\mu<\mu_{ex}^*$, and let 
\[
\lambda=\lambda(\mu):=\mu^{-\frac{2(p-2)}{4-N(p-2)}}.
\]
We assert that $u_{\mu}$ is a minimizer of $E_p(\cdot, \R^N \x \cG)$ in $U_{\mu, t^*\mu}$ if and only if the corresponding $v_{\lambda}$ is a minimizer of 
\[
E_{p,\lambda}(w, \R^N \x \cG):=\frac{1}{2}\|\nabla_x w\|_2^2 + \frac{\lambda}{2}\|\partial_y w\|_2^2-\frac{1}{p}\|w\|_p^p
\]
in 
\[
V_{\lambda, t^*}:=\left\{w \in H^1_{1}(\R^N \times \cG) \text{ such that } \frac{1}{\lambda}\|\nabla_x w\|_2^2 + \|\partial_y w\|_2^2<t^*\right\}.
\]
For each $\lambda_n \rightarrow +\infty$ the sequence $\{v_{\lambda_n}\}$ of minimizers of $E_{p, \lambda_n}$ in $V_{\lambda_n, t}$ satisfies all the properties collected in Lemma \ref{lem: prop v_n}.
The proofs can be adapted from \cite[Lemmas 4.3 and 4.4]{PieVerYu} and \cite[Lemmas 3.3-3.5]{TerTzvVis}. Moreover, proceeding as in the proofs of Lemmas \ref{u in W2,2} and \ref{lemma 3.6}, we conclude that $\|\partial_y v_{\lambda_n}\|_2^2=0$, for $n$ sufficiently large. This fact implies that $\mu_2(\R^N \x \cG,p)>0$, completing the proof. 
\end{proof}

\section{Local minimality of the trivial solution}\label{sez minimalita}
In this section we find a condition for which the trivial solution $Z_{\mu/l(\cG),p}$ is a local minimizer in $H^1_{\mu}(\R^N \x \cG)$. Let us introduce the first positive eigenvalue of the Kirchhoff Laplacian on $\cG$.

\begin{definition}
	\label{primo autovettore}
	For each $\cG$ compact metric graph, we introduce 
	\[
	\alpha_1(\cG):=\inf\left\{\frac{\|w'\|_{L^2(\cG)}^2}{\|w\|_{L^2(\cG)}^2} \text{ such that }w \in H^1(\cG) \setminus\{0\} \text{ and } \int_{\cG}w=0 \right\}
	\]
	the first nonzero eigenvalue of the second derivative on $\cG$. By \cite[Theorem 3.1.1]{intro_graphs}, we know that $\alpha_1>0$. Moreover, this infimum is attained by a normalized eigenfunction $\phi_1 \in H^1(\cG)$ corresponding to $\alpha_1$, characterized by
	\[
	\begin{cases}
			- \phi_1''=\alpha_1\phi_1  &\text{ in }\cG \\
			\sum\limits_{e \text{ incident at } v }\left(\phi_1\right)_e'(v)=0 &\forall v \in \mathcal{V}, 
	\end{cases} \qquad \left\|\phi_1\right\|_{L^2(\cG)}^2=1 \quad \text{and} \quad \left\|\phi_1'\right\|_{L^2(\cG)}^2=\alpha_1(\cG).
	\]
\end{definition}

\begin{remark}\label{rem: EZZ<0}
	In the next theorem, the quantity $E_p'(Z_{\mu/l(\cG),p}, \R^N)(Z_{\mu/l(\cG),p})$ plays a key role. Observe that, by Remark \ref{soluzione triviale},
	\[
	E_p'(Z_{\frac{\mu}{l(\cG)},p}, \R^N)(Z_{\frac{\mu}{l(\cG)},p})=\|\nabla Z_{\frac{\mu}{l(\cG)},p}\|_{L^2\left(\R^N\right)}^2-\|Z_{\frac{\mu}{l(\cG)},p}\|_{L^p(\R^N)}^p=-\cE_p\left(\frac{\mu}{l(\cG)},\R^N\right)\left(\frac{2N(p-2)-4p}{4-N(p-2)}\right),
	\]
	which is negative. 
\end{remark}

\begin{definition}\label{primo autovalore E''}
	Another quantity involved in the next theorem is the first eigenvalue of the quadratic form $E_p''(Z_{\frac{\mu}{l(\cG)},p}, R^N)$, denoted $\beta_1(p, N, \mu / l(\cG))$, which can be expressed variationally as:
	\[
	\beta_1(p, N, \mu / l(\cG))=\inf\limits_{v \in H^1(\R^N) \setminus\{0\}}\left(\frac{E_p''\left(Z_{\frac{\mu}{l(\cG)},p}, \R^N\right)(v, v)}{\|v\|_{L^2(\R^N)}^2}\right).
	\]
\end{definition}

\begin{remark}\label{E''}
	Observe that $\beta_1(p, N, \mu / l(\cG))$ is strictly negative, since 
	\[\begin{split}
		\inf\limits_{v \in H^1(\R^N) \setminus \{0\}}\left(\frac{E_p''(Z_{\frac{\mu}{l(\cG)},p}, \R^N)(v, v)}{\|v\|_{L^2(\R^N)}^2}\right) &\leq \frac{E_p''(Z_{\frac{\mu}{l(\cG)},p}, \R^N)(Z_{\frac{\mu}{l(\cG)},p}, Z_{\frac{\mu}{l(\cG)},p})}{\|Z_{\frac{\mu}{l(\cG)},p}\|_{L^2(\R^N)}^2}\\
		&=\frac{l(\cG)}{\mu}\left(-\cE_p\left(\frac{\mu}{l(\cG)},\R^N\right)\right)\left(\frac{2N(p-2)-4p(p-1)}{4-N(p-2)}\right)<0,
	\end{split}\]
	by Remark \ref{soluzione triviale}. Moreover, such infimum is bounded from below, since
	\[\begin{split}
		\frac{E_p''\left(Z_{\frac{\mu}{l(\cG)},p}, \R^N\right)(v, v)}{\|v\|_{L^2(\R^N)}^2} = \frac{\|\nabla_{x}v\|_2^2-(p-1)\int_{\R^N}\left(Z_{\frac{\mu}{l(\cG)},p}\right)^{p-2}v^2\, dx}{\|v\|_{L^2(\R^N)}^2}\geq -(p-1)\|Z_{\frac{\mu}{l(\cG)},p}\|_{\infty}^{p-2}
	\end{split}\]
	for every $v \in H^1(\R^N) \setminus \{0\}$.
\end{remark}

The next theorem provides necessary and sufficient conditions for $Z_{\mu/l(\cG),p}$ to be a local minimizer.

\begin{proposition}\label{prop mu1}
	Let $\cG$ be a compact metric graph, $N \geq 1$, 
	$p \in \left(2,2+4/N\right)$, and consider the semi-trivial 
	solution $Z:=Z_{\mu/l(\cG),p}$ introduced in 
	Remark \ref{soluzione triviale}. Then:
	\[
	\begin{split}
	\text{($i$) } &\beta_1(p,N,\mu/l(\cG))
		-\frac{l(\cG)E_p'\left(Z,\R^N\right)(Z)}{\mu}
		+\alpha_1(\cG)>0 \quad \implies \quad \text{$Z$ is a local minimizer};\\
	\text{($ii$) } &\beta_1(p,N,\mu/l(\cG))
		-\frac{l(\cG)E_p'\left(Z,\R^N\right)(Z)}{\mu}
		+\alpha_1(\cG)<0 \quad \implies \quad \text{$Z$ is not a local minimizer}.
	\end{split}
	\]
\end{proposition}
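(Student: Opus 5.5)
The plan is a second-order expansion of $E_p(\cdot,\R^N\x\cG)$ near $Z$, obtained by splitting every function into its $y$-average and its $y$-oscillating part. The oscillating part lives in the spectral subspace of the Kirchhoff Laplacian orthogonal to constants, where $\alpha_1(\cG)$ controls the transversal energy. Throughout, let
\[
\omega:=-\frac{l(\cG)E_p'(Z,\R^N)(Z)}{\mu}>0
\]
be the Lagrange multiplier of $Z$; it is positive by Remark \ref{rem: EZZ<0}. We will also use the standard identity $\frac{d}{d\nu}\cE_p(\nu,\R^N)=-\omega/2$, which follows from the explicit scaling \eqref{scalamento energia} together with \eqref{identita di Pohozaev}.

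\emph{Part (i).} Take $u\in H^1_\mu(\R^N\x\cG)$ with $\|u-Z\|_{H^1}$ small, and write $u=a+w$, where $a(x):=l(\cG)^{-1}\int_\cG u(x,y)\,dy$ and $\int_\cG w(x,y)\,dy=0$ for a.e. $x$. The two pieces decouple as follows:
\[
\|u\|_2^2=l(\cG)\|a\|^2_{L^2(\R^N)}+\|w\|_2^2,\qquad
\|\nabla_{x,y}u\|_2^2=l(\cG)\|\nabla a\|^2_{L^2(\R^N)}+\|\nabla_{x,y}w\|_2^2 .
\]
For the nonlinearity I will Taylor expand $|a+w|^p$ around $a$. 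The first-order term $\int|a|^{p-2}a\,w$ vanishes because $w$ has zero $y$-mean. The second-order term is $\frac{p-1}{2}\int |a|^{p-2}w^2$, and I replace $|a|^{p-2}$ by $Z^{p-2}$ at the cost $o(\|w\|^2_{H^1})$; here $a\to Z$ in $H^1(\R^N)$, and the Hölder/Sobolev argument of Lemma \ref{lemma 3.6} applies since $p<2+4/N<2^*$. The cubic remainder is also $o(\|w\|_{H^1}^2)$ by the Gagliardo--Nirenberg inequality \eqref{GN} (for $p<3$ one uses the elementary bound $\le C|w|^p$ instead). This yields
\[
E_p(u)\ge l(\cG)E_p(a,\R^N)+\tfrac12\Big(\|\nabla_{x,y}w\|_2^2-(p-1)\!\int Z^{p-2}w^2\Big)-o(\|w\|_{H^1}^2).
\]
Since $Z$ is a global minimizer on $\R^N$,
\[
l(\cG)E_p(a,\R^N)\ge l(\cG)\cE_p\big((\mu-\|w\|_2^2)/l(\cG),\R^N\big)=E_p(Z)+\tfrac{\omega}{2}\|w\|_2^2+o(\|w\|_2^2).
\]
This step avoids any nondegeneracy or translation analysis in the $a$-direction.

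It remains to show coercivity of the quadratic form in $w$. Expanding $w=\sum_{k\ge1}w_k(x)\phi_k(y)$ in Kirchhoff eigenfunctions, each mode contributes
\[
E_p''(Z,\R^N)(w_k,w_k)+(\alpha_k+\omega)\|w_k\|^2\ge(\beta_1+\alpha_1+\omega)\|w_k\|^2 ,
\]
so the quadratic form is bounded below by $(\beta_1+\alpha_1+\omega)\|w\|_2^2$. Interpolating with the trivial bound $\ge\|\nabla w\|^2-C\|w\|^2$ upgrades this to $\ge c\|w\|^2_{H^1}$ with $c>0$. Hence $E_p(u)\ge E_p(Z)+c\|w\|_{H^1}^2-o(\|w\|^2_{H^1})\ge E_p(Z)$ near $Z$. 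The main obstacle is the uniform $o(\|w\|^2_{H^1})$ control of the Taylor remainder and of the replacement $|a|^{p-2}\to Z^{p-2}$; I would handle both with the localized and global Gagliardo--Nirenberg inequalities.

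\emph{Part (ii).} Let $v$ be a minimizer for $\beta_1$. It exists because $\beta_1<0$ lies below the essential spectrum $[0,\infty)$ of $E_p''(Z,\R^N)$ (Remark \ref{E''}). Consider the curve
\[
u_\varepsilon:=\Big(\tfrac{\mu}{\mu+\varepsilon^2\|v\|^2}\Big)^{1/2}\big(Z+\varepsilon v(x)\phi_1(y)\big)\in H^1_\mu ,
\]
where the mass is exact because $\int_\cG\phi_1=0$. Expanding to order $\varepsilon^2$ as above, the first-order terms vanish and the normalization produces exactly the term $\omega\|v\|^2$. Together with $\|\phi_1'\|^2=\alpha_1$, this gives
\[
E_p(u_\varepsilon)=E_p(Z)+\tfrac{\varepsilon^2}{2}(\beta_1+\alpha_1+\omega)\|v\|^2+o(\varepsilon^2)<E_p(Z)
\]
for small $\varepsilon$, while $u_\varepsilon\to Z$ in $H^1$. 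Hence $Z$ is not a local minimizer.
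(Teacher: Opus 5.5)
Your proposal is correct and follows essentially the same route as the paper. You decompose into the $y$-average and a zero-mean part, Taylor expand around the average, and combine the global minimality of the soliton with $\frac{d}{d\nu}\cE_p(\nu,\R^N)=-\omega/2$. You then get coercivity on the zero-mean subspace from $\beta_1+\alpha_1+\omega>0$ plus interpolation, and for (ii) you use the test direction $v(x)\phi_1(y)$ along the mass-normalized curve. The only differences are cosmetic: you expand mode by mode in Kirchhoff eigenfunctions where the paper applies the fiberwise Poincar\'e inequality on $\cG$ directly, and you take an exact minimizer for $\beta_1$ where an approximate one suffices.
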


Note that the limit case is left open.

\begin{proof}
	Throughout the proof we write $\|\cdot\|_2$ and 
	$(\cdot,\cdot)_2$ for the norm and scalar product of
	$L^2(\R^N\x\cG)$, and we set
	\[
	\kappa:=
	-\frac{l(\cG)E_p'\left(Z,\R^N\right)(Z)}{\mu},
	\qquad
	\Lambda:=
	\beta_1(p,N,\mu/l(\cG))
	+\kappa+\alpha_1(\cG).
	\]
	Thus, hypotheses $(i)$ and $(ii)$ read respectively
	$\Lambda>0$ and $\Lambda<0$. By Remarks
	\ref{rem: EZZ<0} and \ref{E''}, we have $\kappa>0$.

	We introduce the quadratic form
	\begin{equation}\label{forma Q}
		\begin{split}
			Q(u)
			:&=
			E_p''\left(Z,\R^N\x\cG\right)(u,u)
			+\kappa\|u\|_2^2
			\\
			&=
			\left\|\nabla_{x,y}u\right\|_2^2
			+\kappa\|u\|_2^2
			-(p-1)\int_{\R^N\x\cG}Z^{p-2}u^2,
		\end{split}
	\end{equation}
	and denote by $\mathcal B$ its associated symmetric bilinear
	form. Since $Z\in L^\infty(\R^N)$, there exists $M>0$ such that
	\[
	|\mathcal B(u,v)|
	\leq M\|u\|_{H^1}\|v\|_{H^1}
	\qquad
	\forall u,v\in H^1(\R^N\x\cG).
	\]

	Since $Z$ is a constrained critical point of
	$E_p(\cdot,\R^N\x\cG)$ and
	\[
	E_p'\left(Z,\R^N\x\cG\right)(Z)
	=
	l(\cG)E_p'\left(Z,\R^N\right)(Z)
	=
	-\kappa\mu,
	\]
	decomposing
	\[
	\phi=\mu^{-1}(Z,\phi)_2Z+\phi^\perp,
	\qquad
	\phi^\perp\in T_ZH^1_\mu(\R^N\x\cG),
	\]
	we obtain the Lagrange multiplier identity
	\begin{equation}\label{moltiplicatore}
		E_p'\left(Z,\R^N\x\cG\right)(\phi)
		=
		-\kappa(Z,\phi)_2
		\qquad
		\forall\phi\in H^1(\R^N\x\cG).
	\end{equation}

	Consequently, if
	$\gamma:[-1,1]\to H^1_\mu(\R^N\x\cG)$ is a regular curve
	such that $\gamma(0)=Z$ and $\gamma'(0)=u$, differentiating
	twice the identity $\|\gamma(t)\|_2^2=\mu$ gives $(\gamma''(0),Z)_2=-\|u\|_2^2$. Therefore, by \eqref{moltiplicatore},
	\begin{equation}\label{Ep'' da stimare}
		\left.
		\frac{d^2}{dt^2}
		E_p\left(\gamma(t),\R^N\x\cG\right)
		\right|_{t=0}
			=
		E_p''\left(Z,\R^N\x\cG\right)(u,u)
		+
		E_p'\left(Z,\R^N\x\cG\right)(\gamma''(0))
		=
		Q(u).
	\end{equation}

	Notice that $Q$ cannot be coercive on the whole tangent space.
	Indeed, for every $j=1,\ldots,N$, the function $\partial_{x_j}Z\in T_ZH^1_\mu(\R^N\x\cG)$, 	and the curve $t\mapsto Z(\cdot-te_j)$ has constant mass and energy. Hence, by
	\eqref{Ep'' da stimare}, we have $Q(\partial_{x_j}Z)=0$. We shall instead prove coercivity on the subspace of functions
	having zero average with respect to the graph variable, and
	use the global minimality of the Euclidean soliton to control
	the remaining directions.

	\textit{First step: decomposition with respect to the graph
	variable.}
	For $u\in H^1(\R^N\x\cG)$, write
	\[
	u(x,y)=u_1(x)+u_2(x,y),
	\]
	where
	\[
	u_1(x):=
	\frac{1}{l(\cG)}\int_{\cG}u(x,y)\,dy,
	\qquad
	u_2(x,y):=u(x,y)-u_1(x).
	\]
	Then $u_1\in H^1(\R^N)$,
	$u_2\in H^1(\R^N\x\cG)$, and
	\begin{equation}\label{1571}
		\int_{\cG}u_2(x,y)\,dy=0
				\quad\text{for a.e. }x\in\R^N,
		\qquad
		\int_{\cG}\nabla_xu_2(x,y)\,dy=0
		\quad\text{for a.e. }x\in\R^N.
	\end{equation}
	In particular,
	\begin{equation}\label{1572}
		\|u\|_2^2
		=
		l(\cG)\|u_1\|_{L^2(\R^N)}^2+\|u_2\|_2^2.
	\end{equation}
	The analogous orthogonal decomposition holds for the
	Dirichlet integral. Moreover, if
	$u\in T_ZH^1_\mu(\R^N\x\cG)$, then $u_1 \perp Z$ in $L^2(\R^N)$.

	\textit{Second step: positivity of the constrained Hessian.}
	Let $u\in T_ZH^1_\mu(\R^N\x\cG)$ and let
	$u=u_1+u_2$ be the decomposition above. Then
	\begin{equation}\label{termini da stimare}
		Q(u)
		=
		Q(u_1)+2\mathcal B(u_1,u_2)+Q(u_2).
	\end{equation}
	Since $u_1$ is independent on $y$ and is
	$L^2(\R^N)$-orthogonal to $Z$,
	\begin{equation}\label{stima u1}
		\begin{split}
			Q(u_1)
			&=
			l(\cG)
			\left[
			E_p''\left(Z,\R^N\right)(u_1,u_1)
			+\kappa\|u_1\|_{L^2(\R^N)}^2
			\right]
			\geq 0.
		\end{split}
	\end{equation}
	Indeed, the expression in square brackets is the constrained
	second variation of the Euclidean energy at the global minimizer
	$Z$.

	By \eqref{1571}, all mixed terms vanish, including the
	$L^2$ term, and hence
	\[
	\mathcal B(u_1,u_2)=0.
	\]
	Finally, the definition of $\beta_1$, the Poincar\'e inequality
	on $\cG$, and \eqref{1571} give
	\begin{equation}\label{stima u2}
		\begin{split}
			Q(u_2)
			&=
			\int_{\cG}
			E_p''\left(Z,\R^N\right)
			\left(u_2(\cdot,y),u_2(\cdot,y)\right)\,dy
			+
			\|\partial_yu_2\|_2^2
			+
			\kappa\|u_2\|_2^2
			\\
			&\geq
			\left[
			\beta_1(p,N,\mu/l(\cG))
			+\alpha_1(\cG)+\kappa
			\right]\|u_2\|_2^2
			=
			\Lambda\|u_2\|_2^2.
		\end{split}
	\end{equation}
	Thus, if $\Lambda>0$, the constrained Hessian is nonnegative
	on the whole tangent space and is strictly positive in every
	nonzero direction having nontrivial dependence on $y$.

	\textit{Third step: $L^2$-coercivity on the transverse
	subspace.}
	Let
	\[
	\mathcal X:=
	\left\{
	v\in H^1(\R^N\x\cG):
	\int_{\cG}v(x,y)\,dy=0
	\ \text{for a.e. }x\in\R^N
	\right\}.
	\]
	Assume $\Lambda>0$ and set $c_0:=\Lambda$. Repeating
	\eqref{stima u2}, we obtain
	\begin{equation}\label{coercivita L2}
		Q(v)\geq c_0\|v\|_2^2
		\qquad
		\forall v\in\mathcal X.
	\end{equation}

	\textit{Fourth step: $H^1$-coercivity on the transverse
	subspace.}
	Set $C_Z:=(p-1)\|Z\|_\infty^{p-2}$. By \eqref{forma Q},
	\[
	Q(v)
	\geq
	\|\nabla_{x,y}v\|_2^2-C_Z\|v\|_2^2
	\qquad
	\forall v\in H^1(\R^N\x\cG).
	\]
	Hence, for $\eta\in(0,1)$ and $v\in\mathcal X$, writing
	$Q=\eta Q+(1-\eta)Q$ and applying
	\eqref{coercivita L2} to the second summand, we obtain
	\[
	Q(v)
	\geq
	\eta\|\nabla_{x,y}v\|_2^2
	+
	\left[(1-\eta)c_0-\eta C_Z\right]\|v\|_2^2.
	\]
	Choosing $\eta:=c_0/(2(c_0+C_Z))$, 	we have
	\[
	(1-\eta)c_0-\eta C_Z=\frac{c_0}{2}.
	\]
	Therefore,
	\begin{equation}\label{coercivita H1}
		Q(v)
		\geq
		c_1\|v\|_{H^1(\R^N\x\cG)}^2
		\qquad
		\forall v\in\mathcal X,
		\qquad
		c_1:=
		\min\left\{\eta,\frac{c_0}{2}\right\}>0.
	\end{equation}

	\textit{Fifth step: conclusion of Case $(i)$.}
Let $w\in H^1_\mu(\R^N\x\cG)$ be sufficiently close to $Z$
in $H^1(\R^N\x\cG)$, and decompose
\[
w(x,y)=w_1(x)+w_2(x,y),
\]
where
\[
w_1(x):=
\frac{1}{l(\cG)}\int_{\cG}w(x,y)\,dy,
\qquad
\int_{\cG}w_2(x,y)\,dy=0.
\]
Set $s:=\|w_2\|_2^2$, and $m:=\mu/l(\cG)$. By the orthogonality of the decomposition and the mass
constraint,
\[
\|w_1\|_{L^2(\R^N)}^2
=
m-\frac{s}{l(\cG)}.
\]
Moreover, $\|w-Z\|_{H^1}^2=\|w_1-Z\|_{H^1}^2+\|w_2\|_{H^1}^2$. A second-order Taylor expansion of $E_p(w, \R^N \x \cG)$ with respect to $w_2$, around $w_1$, yields
\begin{equation}\label{sviluppo}
	\begin{split}
		E_p\left(w,\R^N\x\cG\right)&=E_p\left(w_1,\R^N\x \cG\right)+\frac12E_p''\left(w_1,\R^N\x\cG\right)(w_2,w_2)+r(w) \\
		&\geq l(\cG)\cE_p\left(m-\frac{s}{l(\cG)},\R^N\right)+\frac12E_p''\left(w_1,\R^N\x\cG\right)(w_2,w_2)+r(w),
	\end{split}
\end{equation}
where
\[
r(w)=o\left(\|w_2\|_{H^1(\R^N\x\cG)}^2\right)\qquad\text{as }w\to Z\text{ in }H^1(\R^N\x\cG).
\] 
Indeed, the linear term $E_p'(w_1, \R^N \x \cG)(w_2)$ vanishes by \eqref{1571}. Combining the H\"older inequality with exponents $p/(p-2)$, $p/2$, the Sobolev inequality $\|v\|_p \leq \|v\|_{H^1}$, and the continuity of the Nemytskii operator $v\mapsto |v|^{p-2}$ from $L^p$ to $L^{p/(p-2)}$, we infer that
\begin{equation}\label{transf of E_p''}
	\begin{split}
		E_p''(w_1, \R^N \x \cG)(w_2, w_2)&=E_p''(Z, \R^N \x \cG)(w_2, w_2)+(p-1)\int_{\R^N \x \cG}\left(Z^{p-2}-w_1^{p-2}\right)w_2^2\\
		&\geq E_p''(Z, \R^N \x \cG)(w_2, w_2)-(p-1)\|Z^{p-2}-w_1^{p-2}\|_{p/(p-2)}\|w_2\|_p^2 \\
		&\geq E_p''(Z, \R^N \x \cG)(w_2, w_2)-C\|w_2\|_{H^1}^2o(1),
	\end{split}
\end{equation}
for some $C>0$. This identity will be useful below. Now, letting $e(\nu):=\cE_p(\nu,\R^N)$, the identity \eqref{scalamento energia} shows that $e$ is smooth on $(0,+\infty)$, and,
\[
e'(m)=\left(1+\frac{2(p-2)}{4-N(p-2)}\right)m^{\frac{2(p-2)}{4-N(p-2)}}\cE_p(1, \R^N)=\frac{1}{m}\left(1+\frac{2(p-2)}{4-N(p-2)}\right)\cE_p(m, \R^N).
\]
Employing the identities in \eqref{identita di Pohozaev},
\[
e'(m)=\frac{1}{2m}E_p'(Z, \R^N)(Z)=-\frac{\kappa}{2}.
\]
It follows that, as $s\to 0$,
\[\begin{split}
	l(\cG)\cE_p\left(m-\frac{s}{l(\cG)},\R^N\right)
	&=l(\cG)\cE_p\left(m,\R^N\right)+\frac{\kappa}{2}s+o(s)\\
	&=E_p\left(Z,\R^N\x\cG\right)+\frac{\kappa}{2}\|w_2\|_2^2+o\left(\|w_2\|_2^2\right),
\end{split}
\]
recalling that $s=\|w_2\|_2^2$. Combining this estimate with \eqref{transf of E_p''} and \eqref{sviluppo}, we infer that
\[\begin{split}
	E_p\left(w,\R^N\x\cG\right)&-E_p\left(Z,\R^N\x\cG\right)\\
	&\geq\frac12\left(E_p''\left(Z,\R^N\x\cG\right)(w_2,w_2)+\kappa\|w_2\|_2^2\right)-C\|w_2\|_{H^1}^2o(1)+o\left(\|w_2\|_2^2\right)+o\left(\|w_2\|_{H^1}^2\right) \\
	&=\frac{1}{2}Q(w_2)-C\|w_2\|_{H^1}^2o(1)+o\left(\|w_2\|_{H^1}^2\right),
\end{split}
\]
for some $C>0$. Since $w_2\in\mathcal X$, estimate \eqref{coercivita H1} yields
\[
E_p\left(w,\R^N\x\cG\right)-E_p\left(Z,\R^N\x\cG\right)\geq \left(\frac{c_1}{2}+o(1)\right)\|w_2\|_{H^1}^2\geq \frac{c_1}{4}\|w_2\|_{H^1}^2\geq 0
\]
whenever $\|w-Z\|_{H^1}$ is sufficiently small. If
$w_2=0$, the same conclusion follows directly from the global
minimality of $Z$ on $H^1_m(\R^N)$. Thus $Z$ is a local
minimizer, proving $(i)$. It is not a strict local minimizer,
since all its sufficiently small translates have the same mass
and energy.

	\textit{Sixth step: Case $(ii)$.}
	Assume $\Lambda<0$. By the definition of $\beta_1$, there
	exists $v\in H^1(\R^N)\setminus\{0\}$ such that
	\[
	\frac{
	E_p''\left(Z,\R^N\right)(v,v)
	}{
	\|v\|_{L^2(\R^N)}^2
	}
	+\kappa+\alpha_1(\cG)<0.
	\]
	Let $\phi_1$ be a normalized eigenfunction associated with
	$\alpha_1(\cG)$ and set
	\[
	u(x,y):=v(x)\phi_1(y).
	\]
	Since $\int_{\cG}\phi_1=0$, we have
	$u\in T_ZH^1_\mu(\R^N\x\cG)$. Moreover,
	$\|\phi_1\|_{L^2(\cG)}=1$ and
	$\|\phi_1'\|_{L^2(\cG)}^2=\alpha_1(\cG)$, and hence
	\[
	Q(u)
	=
	\left[
	\frac{
		E_p''\left(Z,\R^N\right)(v,v)
	}{
		\|v\|_{L^2(\R^N)}^2
	}
	+\alpha_1(\cG)+\kappa
	\right]
	\|v\|_{L^2(\R^N)}^2
	<0.
	\]

	The curve
	\[
	\gamma(t):=
	\sqrt{\mu}\,
	\frac{Z+tu}{\|Z+tu\|_2}
	\in H^1_\mu(\R^N\x\cG)
	\]
	satisfies $\gamma(0)=Z$ and, since $(Z,u)_2=0$,
	$\gamma'(0)=u$. The first derivative of the energy at
	$t=0$ vanishes, while \eqref{Ep'' da stimare} gives
	\[
	\left.
	\frac{d^2}{dt^2}
	E_p\left(\gamma(t),\R^N\x\cG\right)
	\right|_{t=0}
	=
	Q(u)<0.
	\]
	Therefore,	 $E_p\left(\gamma(t),\R^N\x\cG\right)
	<
	E_p\left(Z,\R^N\x\cG\right)$ for every sufficiently small $t\neq0$. Since
	$\gamma(t)\to Z$ in $H^1(\R^N\x\cG)$ as $t \to 0$, the function $Z$ is
	not a local minimizer.
\end{proof}

	Thanks to Proposition \ref{prop mu1}, we can prove Theorem \ref{thm: loc min semitriv}
\begin{proof}[Proof of Theorem \ref{thm: loc min semitriv}]
	We first show that the denominator appearing in the expression of $\mu_1(\R^N \x \cG,p)$ is strictly positive. By Remarks \ref{rem: EZZ<0} and \ref{E''}, 
	\[\begin{split}
	E_p'\left(Z_{1, p}, \R^N\right)\left(Z_{1, p}\right)- &\beta_1(p,N,1) 	\\
	& \geq -\cE_p\left(1,\R^N\right)\left(\frac{2N(p-2)-4p}{4-N(p-2)}\right)-\left(-\cE_p\left(1,\R^N\right)\right)\left(\frac{2N(p-2)-4p(p-1)}{4-N(p-2)}\right)\\ &=-\cE_p\left(1,\R^N\right)\frac{4p(p-2)}{4-N(p-2)},
	\end{split}\]
	which is strictly positive. To apply Proposition \ref{prop mu1}, it is enough to make the dependence on the mass $\mu$ explicit in the quantity
	\[
	\beta_1(p, N, \mu / l(\cG))-\frac{l(\cG)E_p'\left(Z_{\frac{\mu}{l(\cG)}, p}, \R^N\right)\left(Z_{\frac{\mu}{l(\cG)}, p}\right)}{\mu}+\alpha_1(\cG).
	\]
	By \eqref{scalamento norme},
	\[\begin{split}
	E_p'\left(Z_{\frac{\mu}{l(\cG)}, p}, \R^N\right)\left(Z_{\frac{\mu}{l(\cG)}, p}\right)&=\left\|Z_{\frac{\mu}{l(\cG)}, p}\right\|_{L^2(\R^N)}^2-\left\|Z_{\frac{\mu}{l(\cG)}, p}\right\|_{L^p(\R^N)}^p =\left(\frac{\mu}{l(\cG)}\right)^{1+\frac{2(p-2)}{4-N(p-2)}}E_p'\left(Z_{1, p}, \R^N\right)\left(Z_{1, p}\right).
	\end{split}\]
For every $\nu>0$, define the rescaling operator $R_{\nu}:H^1(\R^N) \rightarrow H^1(\R^N)$ by 
\[
u \longmapsto R_{\nu}(u):=\nu^{\frac{2}{4-N(p-2)}}u\left(\nu^{\frac{p-2}{4-N(p-2)}}x\right).
\]
The map $R_{\nu}$ is a bijection and, by \eqref{da Znup a Z1p}, satisfies $R_{\nu}(Z_{1,p})=Z_{\nu, p}$. A change of variables gives 
\[\begin{split}
\inf\limits_{v \in H^1(\R^N) \setminus\{0\}}\left(\frac{E_p''\left(Z_{\frac{\mu}{l(\cG)}, p}, \R^N\right)(v, v)}{\|v\|_{L^2(\R^N)}^2}\right)&=\inf\limits_{v \in H^1(\R^N) \setminus\{0\}}\left(\frac{E_p''\left(R_{\frac{\mu}{l(\cG)}}(Z_{1, p}, \R^N)\right)\left(R_{\frac{\mu}{l(\cG)}}(v), R_{\frac{\mu}{l(\cG)}}(v)\right)}{\left\|R_{\frac{\mu}{l(\cG)}}(v)\right\|_{L^2(\R^N)}^2}\right) \\
&=\inf\limits_{v \in H^1(\R^N) \setminus\{0\}}\left(\frac{\left(\frac{\mu}{l(\cG)}\right)^{1+\frac{2(p-2)}{4-N(p-2)}}E_p''\left((Z_{1, p}, \R^N)\right)\left(v, v\right)}{\frac{\mu}{l(\cG)}\|v\|_{L^2(\R^N)}^2}\right).
\end{split}\]
Combining these identities, we conclude that
\[\begin{split}
\beta_1(p, N, \mu / l(\cG)) &-\frac{l(\cG)E_p'\left(Z_{\frac{\mu}{l(\cG)}, p}, \R^N\right)\left(Z_{\frac{\mu}{l(\cG)}, p}\right)}{\mu}+\alpha_1(\cG) \\
& =\left(\frac{\mu}{l(\cG)}\right)^{\frac{2(p-2)}{4-N(p-2)}}\left(\beta_1(p, N, 1)-E_p'\left(Z_{1, p}, \R^N\right)\left(Z_{1, p}\right)\right)+\alpha_1(\cG).
\end{split}
\]
Proposition \ref{prop mu1} yields the desired result.
\end{proof} 

\section*{Acknowledgment} 
The authors are affiliated to the INDAM-GNAMPA group. GV acknowledges the MUR grant Dipartimento di
Eccellenza 2023-2027.

\medskip

\noindent \textbf{Data availability:} No data were used for the research described in the article.

\medskip

\noindent \textbf{Conflict of interest:} The authors declare that they have no conflict of interest.

\end{document}